\documentclass{amsart}

\usepackage{hyperref}
\usepackage{amsmath,amsthm,amssymb}

\newtheorem{thm}{Theorem}
\newtheorem{prop}[thm]{Proposition}
\newtheorem{cor}[thm]{Corollary}
\newtheorem{lem}[thm]{Lemma}
\newtheorem{clm}[thm]{Claim}

\theoremstyle{definition}
\newtheorem{dfn}[thm]{Definition}
\newtheorem{rem}[thm]{Remark}
\newtheorem{ex}[thm]{Example}

\newtheorem{ques}[thm]{Question}

\numberwithin{thm}{section}
\numberwithin{equation}{section}

\DeclareMathOperator{\dist}{dist}
\DeclareMathOperator{\diam}{diam}
\DeclareMathOperator{\Ant}{Ant}
\DeclareMathOperator{\tp}{top}

\newcommand{\step}[1]{\medskip\noindent\textit{#1.}}

\title[Extremal subsets in GCBA spaces]{Extremal subsets in geodesically complete spaces with curvature bounded above}
\author[T. Fujioka]{Tadashi Fujioka}
\address{Department of Mathematics, Osaka University, Toyonaka, Osaka 560-0043, Japan}
\email{fujioka@cr.math.sci.osaka-u.ac.jp, tfujioka210@gmail.com}
\date{\today}
\subjclass[2020]{53C20, 53C21, 53C23}
\keywords{Upper curvature bound, geodesic completeness, GCBA spaces, extremal subsets, topological singularities, noncritical maps}
\thanks{Supported by JSPS KAKENHI Grant Numbers 15H05739, 20H00114, 22KJ2099 (22J00100)}

\begin{document}

\begin{abstract}
We introduce the notion of an extremal subset in a geodesically complete space with curvature bounded above, i.e., a GCBA space.
This is an analogue of an extremal subset in an Alexandrov space with curvature bounded below introduced by Perelman and Petrunin.
We prove that under an additional assumption the set of topological singularities in a GCBA space forms an extremal subset.
We also exhibit some structural properties of extremal subsets in GCBA spaces.
\end{abstract}

\maketitle

\section{Introduction}\label{sec:intro}

The notion of an extremal subset, introduced by Perelman and Petrunin \cite{PP:ext}, plays an essential role in the geometry of finite-dimensional Alexandrov spaces with curvature bounded below, or CBB spaces.
Extremal subsets are singular sets of CBB spaces defined by a purely metric condition, but also closely related to the topological structure of the ambient spaces.
For example, the boundary of a CBB space and the set of non-manifold points are extremal subsets (\cite[Examples 1.2]{PP:ext}).
More generally, any CBB space admits a unique maximal stratification into topological manifolds such that the closures of the strata are extremal subsets (\cite[3.8]{PP:ext}).
See also \cite[Section 4]{Pet:semi} for a brief summary.

On the other hand, metric spaces with curvature bounded above, or CBA spaces, do not have such a nice singular structure.
Even if we restrict our attention to GCBA spaces, that is, separable, locally compact, locally geodesically complete CBA spaces, there exists a $2$-dimensional GCBA space with no such topological stratification, which arises as a limit of polyhedral spaces (\cite[p.\ 412]{Kl}, \cite[Example 2.7]{N:asym}).
Although there is a notion of stratification of GCBA spaces defined by Lytchak and Nagano \cite{LN:geod}, it is much more measure-theoretic.

Nevertheless, the results of Lytchak and Nagano \cite{LN:geod}, \cite{LN:top} suggest that there are many similarities between CBB and GCBA geometries.
From this point of view, the author \cite{F:nc} recently introduced the regularity of distance maps on GCBA spaces, which can be viewed as a dual concept of Perelman's regularity in CBB geometry (\cite{Per:alex}, \cite{Per:mor}; originally introduced by Grove and Shiohama \cite{GS} in the Riemannian setting for distance functions).
Since extremal subsets in CBB spaces are defined by using this regularity, one can literally translate it to GCBA spaces.
Although GCBA extremal subsets do not have nice properties as CBB ones, it would be meaningful to exhibit the difference between CBB and GCBA geometries.

\begin{dfn}\label{dfn:ext}
Let $X$ be a GCBA space and $E$ a closed subset.
We say that $E$ is \textit{extremal} if it satisfies the following condition:
\begin{enumerate}
\item[(E)]
any point of $E$ has a neighborhood $U$ in $X$ (contained in some tiny ball) such that if $p\in U\cap E$ is a local minimum point of $\dist_q|_E$, where $q\in U\setminus E$, then $p$ is a critical point of $\dist_q$, i.e., $\min\overline{|q'_p\cdot|}\ge\pi/2$.
\end{enumerate}
\end{dfn}

Here a tiny ball is a convex ball where triangle comparison holds.
The notation $\overline{|\ ,\ |}$ denotes the antipodal distance on the space of directions and $q'_p$ denotes the direction of the unique shortest path from $p$ to $q$.
See Section \ref{sec:pre} for more details.
The condition $\min\overline{|q'_p\cdot|}\ge\pi/2$ means that for any direction $\xi$ at $p$ there exists an antipode of $q'_p$ making an angle $\ge\pi/2$ with $\xi$.
The above definition is almost the same as the CBB one, except for the definition of a critical point: a CBB critical point is defined by $\max|Q'_p\cdot|\le\pi/2$, where $Q'_p$ denotes the set of all directions of shortest paths from $p$ to $q$.

We give some examples (see Section \ref{sec:ex} for details).
Let $\Sigma$ be a compact GCAT($1$) space, i.e., a compact, geodesically complete CAT($1$) space.
Consider the Euclidean cone over the rescaled space $\lambda\Sigma$, where $\lambda\ge3/2$.
Then its vertex is an extremal point, that is, a one-point extremal subset (Proposition \ref{prop:ver}).
Another basic example comes from Reshetnyak's gluing theorem (\cite{R}): one can glue two GCBA spaces along their isometric convex subsets to obtain a new GCBA space.
Under a reasonable assumption, the convex subset, and also its complement and boundary are extremal subsets in the glued space (Proposition \ref{prop:conv'}).
In particular, any convex subset in a GCBA space, and its complement and boundary are extremal subsets in the doubled space (Proposition \ref{prop:conv}).
The union of the measure-theoretic strata of Lytchak-Nagano \cite{LN:geod} is extremal if it is closed (Proposition \ref{prop:str}).

We are especially interested in the relationship between metric singularities and topological singularities.
Recall that any non-manifold point of a CBA homology manifold is isolated (\cite[Theorem 1.2]{LN:top}).
We show that such an isolated topological singularity of a GCBA space is an extremal point (Proposition \ref{prop:pt}).
More generally, we prove the following theorem.

\begin{thm}\label{thm:hom}
Let $X$ be a GCBA space and fix a positive integer $k$.
Let $X^k_{\hom}$ denote the set of points in $X$ whose spaces of directions are homotopy equivalent to $\mathbb S^{k-1}$.
If $X^k_{\hom}$ is open, then its complement is an extremal subset of $X$.
\end{thm}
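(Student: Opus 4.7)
The plan is to verify condition (E) for $E := X \setminus X^k_{\hom}$, which is closed by the openness hypothesis on $X^k_{\hom}$. I fix $p \in E$, a tiny-ball neighborhood $U$, and $q \in U \setminus E = U \cap X^k_{\hom}$, assume that $p$ is a local minimum of $\dist_q|_E$, and suppose toward contradiction that $p$ is \emph{not} a critical point of $\dist_q$. The goal is then to show $\Sigma_p \simeq \mathbb{S}^{k-1}$, directly contradicting $p \in E$.

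The first step is to convert non-criticality into a local topological structure. Using the regularity theory of distance maps in \cite{F:nc} (parallel to the submersion theorems of \cite{LN:geod}), non-criticality of $\dist_q$ at $p$ should produce a local topological trivialization of $\dist_q$ near $p$: a neighborhood $U' \subseteq U$ of $p$, a space $F$ with a basepoint $f_0$, and a homeomorphism $h \colon (-\delta, \delta) \times F \to U'$ with $h(0, f_0) = p$ and $\dist_q \circ h(t, f) = \dist_q(p) + t$.

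The second step uses the local minimum hypothesis. Shrinking $U'$ if necessary, $U' \cap E \subseteq \{\dist_q \ge \dist_q(p)\}$, so every $x \in U'$ with $\dist_q(x) < \dist_q(p)$ lies in $X^k_{\hom}$. In particular, for sufficiently small $t_0 > 0$ the point $p' := h(-t_0, f_0)$ lies in $X^k_{\hom}$, and hence $\Sigma_{p'} \simeq \mathbb{S}^{k-1}$. The third step identifies $\Sigma_p$ with $\Sigma_{p'}$. Translation by $t_0$ in the first coordinate of the product sends a small product neighborhood of $(0, f_0)$ homeomorphically onto one of $(-t_0, f_0)$; composing with $h$ gives a homeomorphism between small punctured neighborhoods of $p$ and $p'$ in $X$. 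A standard consequence of the tiny-ball structure of GCBA spaces (see \cite{LN:geod}) is that a sufficiently small punctured ball around any point deformation retracts onto the space of directions there, so $\Sigma_p \simeq \Sigma_{p'} \simeq \mathbb{S}^{k-1}$, yielding the desired contradiction.

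The main obstacle I anticipate lies in the first step: extracting a genuine topological product structure from non-criticality of a single distance function. The topological submersion theorems available in this setting are typically stated for distance maps whose rank equals the local dimension (strainer-type hypotheses), so a single non-critical distance may need to be thickened by an auxiliary regular map, or else $h$ has to be replaced by the gradient-like flow of $\dist_q$ starting at $p$ and the homotopy equivalence $\Sigma_p \simeq \Sigma_{p'}$ transported along flow lines. Once the local topological triviality is in hand, the remaining translation-plus-homotopy-equivalence argument is routine.
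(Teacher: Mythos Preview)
Your approach is genuinely different from the paper's, and the obstacle you flag in the first step is fatal with the tools currently available for GCBA spaces. The fibration theorem for noncritical maps (\cite[Theorem~1.7]{F:nc}, quoted in Section~\ref{sec:nc}) yields only a Hurewicz fibration, not a local product; a Hurewicz fibration over an interval does not in general furnish a self-homeomorphism of the total space carrying $p$ to a nearby point $p'$, so your translation step has no foundation. Your two proposed workarounds both fail: thickening $\dist_q$ to a noncritical map of rank $\dim T_p$ would make $p$ a bi-Lipschitz Euclidean point, which is precisely what one cannot assume; and gradient curves for distance functions are not known to exist in GCBA spaces---this is a basic divergence from CBB geometry (cf.\ the Question following Corollary~\ref{cor:cone}). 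Even granting a fibration, the objects you want to compare, namely $\Sigma_p$ and $\Sigma_{p'}$, are spaces of directions at points of the total space, not fibers, so fiber-homotopy equivalence of fibers is of no help either.

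The paper avoids moving $p$ altogether and works inside $\Sigma_p$. The local-minimum hypothesis says the sublevel set $B(q,|pq|)$ near $p$ lies in $X^k_{\hom}$; a blow-up argument (\cite[Lemma~3.3]{LN:top}) transfers this to $\Sigma_p$, forcing the ball $B(\xi,\pi/2)$ around $\xi:=q'_p$ to be a homology $(k-1)$-manifold, with metric spheres $\partial B(\xi,r)$ carrying the homology of $\mathbb S^{k-2}$. Regularity of $\dist_q$ at $p$ supplies $\eta\in\Sigma_p$ with $\overline{|\xi\eta|}<\pi/2$, and Corollary~\ref{cor:ind} then shows that the two sets $B(\xi,\pi/2+\delta)$ and $\Sigma_p\setminus\bar B(\xi,\pi/2-\delta)$ are contractible and that their intersection retracts onto a sphere $\partial B(\xi,r)$. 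Mayer--Vietoris and van~Kampen now give $\Sigma_p\simeq\mathbb S^{k-1}$, the desired contradiction. The key idea you are missing is to push the manifold information from $X$ into $\Sigma_p$ via blow-up, rather than to push $p$ through $X$.
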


The assumption of the above theorem has another formulation.

\begin{rem}\label{rem:hom}
Let $X^k_{\tp}$ denote the set of points in $X$ having neighborhoods homeomorphic to $\mathbb R^k$.
Then $X^k_{\tp}\subset X^k_{\hom}$, and the equality holds if and only if $X^k_{\hom}$ is open.
This follows from the topological regularity theorem of Lytchak-Nagano \cite[Theorem 1.1]{LN:top}.
In particular, the assumption of Theorem \ref{thm:hom} is satisfied when both $\dim X$ and $k$ are at most $2$ (see \cite[Proposition 3.1]{N:sph} for $\dim X=k=2$).
\end{rem}

However, when $\dim X\ge 3$, the assumption of Theorem \ref{thm:hom} is not always satisfied.
The author is grateful to Alexander Lytchak for the following example.

\begin{ex}\label{ex:hom}
Let $\Sigma$ be a contractible compact GCAT($1$) space of dimension $\ge 2$.
Such a space exists because there exists a contractible finite simplicial complex without boundary (such as the dunce hat; \cite{Z}) and any finite simplicial complex admits a CAT($1$) metric, due to Berestovskii \cite{B}.
The wedge of $\Sigma$ and $\mathbb S^{k-1}$ is a compact GCAT($1$) space that is homotopy equivalent to $\mathbb S^{k-1}$.
If $X$ is the Euclidean cone over this wedge, then $X^k_{\hom}$ is not open at the vertex.
\end{ex}

Although the assumption of Theorem \ref{thm:hom} does not hold in general, the following question still makes sense.

\begin{ques}
Is the complement of $X^k_{\tp}$ extremal?
This is true for CBB spaces (where $k=\dim X$; see \cite[Example 1.2]{PP:ext}).
Note that $X\setminus X^k_{\tp}$ is equal to the closure of $X\setminus X^k_{\hom}$ by the topological regularity theorem mentioned above. 
\end{ques}

Now we describe the structural properties of extremal subsets in GCBA spaces.
In what follows, $E$ denotes an extremal subset of a GCBA space $X$.
The following generalizes \cite[Theorem 1.1]{LN:geod}.

\begin{thm}\label{thm:dim}
The Hausdorff dimension of any open subset $U$ of $E$ coincides with the topological dimension.
It also coincides with the maximal dimension of open subsets in $U$ that are (bi-Lipschitz) homeomorphic to Euclidean balls.
\end{thm}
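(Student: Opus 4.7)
The plan is to follow the strategy of Lytchak-Nagano used in \cite{LN:geod} to establish the case $E=X$, invoking condition (E) at the step where ambient regularity must be transferred to $E$. Let $U\subset E$ be relatively open and set $n:=\dim_H U$. Since $\dim_{\mathrm{top}}\le \dim_H$ always holds, and since a bi-Lipschitz embedding of an open set into $\mathbb{R}^n$ forces its topological dimension to equal $n$, it suffices to produce a relatively open $V\subset U$ that is bi-Lipschitz homeomorphic to an open subset of $\mathbb{R}^n$.

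First I would fix $p\in U$ at which the upper $n$-dimensional Hausdorff density of $E$ is positive, inside some tiny ball. The aim is to choose $q_1,\dots,q_n$ such that the distance map $f:=(\dist_{q_1},\dots,\dist_{q_n})$ restricts to a bi-Lipschitz homeomorphism of a neighborhood $V$ of $p$ in $E$ onto an open subset of $\mathbb{R}^n$. The $q_i$ are chosen inductively: having fixed $q_1,\dots,q_{k-1}$, select $q_k\in E$ close to $p$ so that $(q_k)'_p\in \Sigma_p$ is almost perpendicular to each $(q_j)'_p$ for $j<k$. This is possible because the positivity of the $n$-dimensional Hausdorff density of $E$ at $p$ forces the set of directions to $E$ in $\Sigma_p$ to be rich enough, via the dimension-theoretic results of \cite{LN:geod} applied on the GCAT($1$) space $\Sigma_p$.

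The upper Lipschitz bound for $f|_V$ is automatic. For the lower bound I would argue by contradiction: if it fails on every neighborhood of $p$, a compactness argument produces a point $p_\infty\in V\cap E$ and a point $q$ close to some $q_i$ such that $p_\infty$ is a local minimum of $\dist_q|_E$, yet the almost-orthonormal configuration of the $(q_i)'_{p_\infty}$ produces a direction $\xi\in \Sigma_{p_\infty}$ with $\overline{|q'_{p_\infty}\,\xi|} < \pi/2$. After perturbing $q$ into $X\setminus E$ (justified because the interior of $E$ in $X$ is empty near the failure locus, for otherwise $f|_V$ would be bi-Lipschitz by the ambient $E=X$ case), this contradicts condition (E), so $f|_V$ is the desired bi-Lipschitz chart.

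The main obstacle I foresee is the inductive construction of directions in $\Sigma_p$ that actually arise from points of $E$. In the CBB setting one exploits the fact that the tangent cone $T_pE$ inherits an Alexandrov structure and extremality, giving an intrinsic inductive structure on $E$; no such statement is known in the GCBA setting. Substituting a purely measure-theoretic density argument at Hausdorff-typical points of $E$, combined if necessary with iteration of extremality under tangent-cone blow-up at $p$, appears to be the crux of the argument.
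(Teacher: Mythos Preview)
Your proposal has a genuine gap at the point you yourself flag, and a second one you do not flag.

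First, the density-to-directions step does not go through as written. Positive upper $n$-density of $E$ at $p$ gives no control on $\Sigma_pE$ in the GCBA setting: there is no a priori link between the Hausdorff density of a closed set and the size of its space of limit directions, and the Lytchak--Nagano results you cite apply to the ambient space, not to an arbitrary closed subset. The paper bypasses this entirely. It first proves that extremality is inherited infinitesimally: $\Sigma_pE$ is itself an extremal subset of $\Sigma_p$ in an appropriate sense (Proposition~\ref{prop:extf}). This allows an inductive version of the direction-finding Proposition~\ref{prop:dir} to be run \emph{inside} $F=\Sigma_pE$ (Proposition~\ref{prop:dirf}), which in turn yields $c(\varepsilon)$-openness of any noncritical map restricted to $E$ (Proposition~\ref{prop:open}). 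From openness one gets immediately that the Hausdorff dimension of any open $U\subset E$ equals its noncritical number (Corollary~\ref{cor:dim}), which is the correct substitute for your density argument.

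Second, your contradiction argument for the lower bi-Lipschitz bound does not produce what you claim. If $|f(x_i)f(y_i)|/|x_iy_i|\to 0$ along $x_i,y_i\in E$, the limit direction $\xi\in\Sigma_{p_\infty}E$ is nearly orthogonal to each $a_j'$, but nothing forces $p_\infty$ to be a local minimum of $\dist_q|_E$ for any $q$ near the $a_j$; condition~(E) simply does not engage. The paper's mechanism is different: once one knows $\dim T_pE\le k$ (from the infinitesimal machinery above), a near-orthogonal direction $\xi\in\Sigma_pE$ would carry, via Lemma~\ref{lem:ind}, a noncritical collection of size $k$ in $\Sigma_\xi(\Sigma_p)$, forcing $\dim\Sigma_pE\ge k$ and hence $\dim T_pE\ge k+1$, a contradiction (Lemma~\ref{lem:mfd}). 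The bi-Lipschitz chart is then obtained not directly but through Lytchak's open-map theorem, which upgrades the infinitesimal lower bound to a locally bi-Lipschitz embedding on an open dense subset.

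In short, the missing idea is that condition~(E) is not used at the metric level to trap local minima, but is first translated into an extremality statement for $\Sigma_pE\subset\Sigma_p$, and all the work is done inductively on spaces of directions.
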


This theorem allows us to define the \textit{local dimension} of $E$ at $p\in E$ by
\begin{equation}\label{eq:dim}
\dim_pE:=\lim_{r\to0}\dim B(p,r)\cap E,
\end{equation}
where the right-hand side denotes the Hausdorff dimension.
It is clearly upper semicontinuous in $p$.
For any nonnegative integer $k$, we define the \textit{$k$-dimensional part} $E^k$ of $E$ as the set of points in $E$ with local dimension $k$.
Note that the set $E^0$ of extremal points in $E$ is discrete since any distance function from a point has no critical points around it (see \cite[Proposition 7.3]{LN:geod}).

\begin{rem}\label{rem:dim}
One can also define the \textit{infinitesimal dimension} of $E$ at $p$ by the dimension of the tangent cone $T_pE$ of $E$ at $p$, which is an extremal subset of the tangent cone $T_p$ of $X$ at $p$ (Corollary \ref{cor:cone}).
We always have $\dim_pE\ge\dim T_pE$ (Corollary \ref{cor:dim2}).
However, the equality does not hold in general, except when $E=X$ (\cite[Theorem 1.2]{LN:geod}).
See Remark \ref{rem:dim2} for a counterexample.
\end{rem}

From now on we assume $k\ge 1$.
We say that $p\in E$ is a \textit{$k$-regular point} of $E$ if $T_pE$ is isometric to $\mathbb R^k$ with respect to the metric of $T_p$.
Note that a $k$-regular point is not necessarily contained in $E^k$ (see again Remark \ref{rem:dim2}).

\begin{thm}\label{thm:reg}
The set of $k$-regular points of $E$ is dense in $E^k$ and its complement in $E^k$ has Hausdorff dimension at most $k-1$.
The $k$-dimensional Hausdorff measure is locally finite and locally positive on $E^k$.
\end{thm}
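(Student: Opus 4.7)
The plan is to model the proof on the corresponding structural theorem for the absolute case $E=X$ established in \cite{LN:geod}, using Theorem \ref{thm:dim} as the main new input together with the fact that $T_pE$ is an extremal subset of $T_p$ of dimension at most $k$ (Corollary \ref{cor:cone} and Remark \ref{rem:dim}).

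First, I would build bi-Lipschitz charts on $E^k$ out of regular distance maps. For $p\in E^k$, Theorem \ref{thm:dim} produces an open subset $V$ of $E^k$ arbitrarily close to $p$ together with a bi-Lipschitz homeomorphism $\Phi\colon V\to \Omega\subset\mathbb{R}^k$; moreover, following \cite{F:nc}, one may arrange $\Phi$ to be the restriction to $E$ of a regular distance map $F=(\dist_{q_1},\dots,\dist_{q_k})$ whose differentials form an approximate orthonormal system on $T_xE$ at each $x\in V$. Applying Kirchheim's metric differentiation theorem, at Lebesgue-a.e.\ $y\in\Omega$ the inverse $\Phi^{-1}$ is metrically differentiable with metric derivative an isometric embedding $L\colon\mathbb{R}^k\hookrightarrow T_{\Phi^{-1}(y)}E$.

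Second, at such a point $x=\Phi^{-1}(y)$ I would upgrade $L$ to the equality $T_xE=\mathbb{R}^k$ by passing to spaces of directions: $\Sigma_xE$ is an extremal subset of $\Sigma_x$ of dimension at most $k-1$ containing an isometric $\mathbb{S}^{k-1}$, which by invariance of domain in the $(k-1)$-dimensional set $\Sigma_xE$ is open, and by closedness is a union of components. Any hypothetical extra direction $\xi\in\Sigma_xE\setminus\mathbb{S}^{k-1}$ should be ruled out by applying condition (E) of Definition \ref{dfn:ext} for the extremal subset $T_xE\subset T_x$ to an auxiliary point placed along the Euclidean factor opposite to $\xi$, contradicting the critical point criterion. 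This forces $T_xE=\mathbb{R}^k$ and shows that the $k$-regular locus has full Lebesgue image in each chart, hence is dense in $E^k$. The measure statements and the $(k-1)$-dimensional bound on the non-regular locus then follow by covering $E^k$ with countably many such charts and pushing forward the resulting Lebesgue/Hausdorff estimates, as in the analogous absolute case.

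The main obstacle is the upgrade step above: a $k$-dimensional extremal cone containing a flat $\mathbb{R}^k$ can a priori carry additional branches in the ambient GCBA cone $T_x$, and ruling these out requires a careful use of geodesic completeness together with the antipodal formulation of condition (E); this is where genuinely new work beyond the absolute case is needed.
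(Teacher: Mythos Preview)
Your plan has a circularity problem. You take Theorem \ref{thm:dim} as the main input, but in the paper (see Remark \ref{rem:prf}) Theorem \ref{thm:dim} is deduced \emph{from} Theorems \ref{thm:reg} and \ref{thm:mfd}, not the other way around. At the stage where Theorem \ref{thm:reg} is being proved, the only dimension statement available is Corollary \ref{cor:dim} (Hausdorff dimension of an open subset of $E$ equals its noncritical number and its splitting number), not the existence of bi-Lipschitz Euclidean charts. So the whole Kirchheim-differentiation step is not legitimately available to you.

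Even setting the circularity aside, your route is considerably harder than the paper's and in one place weaker. The paper avoids metric differentiation altogether: by Corollary \ref{cor:dim} every open subset of $E^k$ contains a point $p$ whose \emph{ambient} tangent cone splits as $T_p=\mathbb R^k\times K$; since $\dim T_pE\le k$ (Corollary \ref{cor:dim2}), Lemma \ref{lem:reg} forces $T_pE=\mathbb R^k\times\{o\}$ in two lines (any point off the $\mathbb R^k$-factor has ambient splitting number $>k$, so $T_pE\subset\mathbb R^k\times\{o\}$; strict inclusion contradicts extremality of $T_pE$). This gives density of $k$-regular points immediately. For the complement, the same lemma shows that a non-$k$-regular point of $E^k$ has ambient splitting number $<k$, and then \cite[Theorem 1.6]{LN:geod} bounds the Hausdorff dimension of that locus in $X$ (hence in $E$) by $k-1$. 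Your chart-by-chart Lebesgue argument would only give that the non-regular set has $\mathcal H^k$-measure zero, not Hausdorff dimension $\le k-1$; you would still need an independent mechanism to get the dimension drop.

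Finally, the ``upgrade step'' you flag as the main obstacle is exactly what Lemma \ref{lem:reg} handles cleanly via the ambient splitting, without any invariance-of-domain or branch-exclusion argument inside $\Sigma_xE$. The moral is that the paper leverages the \emph{ambient} splitting structure of $T_p$ rather than trying to analyze $T_pE$ intrinsically; this both simplifies the regularity argument and gives the sharper $(k-1)$-dimension bound for free.
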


\begin{thm}\label{thm:mfd}
$E^k$ is $k$-rectifiable, i.e., there exists a countable family of Lipschitz maps $f_i:A_i\to E^k$, where $A_i\subset\mathbb R^k$, such that $\bigcup_{i=1}^\infty f_i(A_i)$ has full measure in $E^k$ (with respect to the $k$-dimensional Hausdorff measure).
Moreover, $E^k$ contains a Lipschitz manifold of dimension $k$ that is dense in $E^k$ and open in $E$.
\end{thm}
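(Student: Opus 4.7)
The plan is to combine the density of $k$-regular points from Theorem~\ref{thm:reg} with the regularity theory of distance maps from \cite{F:nc}. Since the complement of the $k$-regular set in $E^k$ has Hausdorff dimension at most $k-1$ and hence $\mathcal H^k$-measure zero, for both assertions it suffices to produce, near each $k$-regular point $p\in E^k$, a bi-Lipschitz homeomorphism from a neighborhood of $p$ in $E$ onto an open subset of $\mathbb R^k$. Rectifiability then follows by countably covering the $k$-regular set by such charts; the union $M$ of the chart domains is relatively open in $E$ by construction, dense in $E^k$ by density of the $k$-regular set, and a Lipschitz $k$-manifold because the charts are bi-Lipschitz with Euclidean range.

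To construct the chart at a $k$-regular point $p$, I would fix an orthonormal basis $\xi_1,\dots,\xi_k$ of $T_pE\cong\mathbb R^k$ and pick points $q_1,\dots,q_k$ near $p$ with $(q_i)'_p$ arbitrarily close to $\xi_i$ (using the density of reachable directions in $\Sigma_p$). The candidate is $\Phi=(\dist_{q_1},\dots,\dist_{q_k})$, automatically Lipschitz on $X$. At first order, $\Phi$ along a direction $\eta\in T_pE$ produces the vector $(-\cos|\eta\xi_i|)_i$, which is bi-Lipschitz in $\eta\in\mathbb R^k$ since the $\xi_i$ are orthonormal. Upgrading this pointwise computation to a uniform bi-Lipschitz estimate for $\Phi|_{U\cap E}$ on a small neighborhood $U$ of $p$ is where the extremal condition~(E) enters: it forces tangent cones of $E$ at nearby points to stay close to $T_pE$, so that first-variation estimates for $\dist_{q_i}$ along paths in $E$ behave uniformly like the model in $\mathbb R^k$. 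I would also invoke the regularity of $\Phi$ from \cite{F:nc}, which makes $\Phi$ an open map on a neighborhood of $p$ in $X$.

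The main obstacle is to show that $\Phi(U\cap E)$ is actually open in $\mathbb R^k$, not merely the bi-Lipschitz image of some thinner set. The bi-Lipschitz lower bound yields injectivity and the lower half of the chart estimate, but surjectivity of the chart onto a Euclidean ball around $\Phi(p)$ is harder. I see two plausible routes: (a) apply invariance of domain to $\Phi|_{U\cap E}$, using that $U\cap E$ has topological dimension $k$ by Theorem~\ref{thm:dim}; or (b) combine the openness of $\Phi$ on $X$ from \cite{F:nc} with the local positivity and finiteness of $\mathcal H^k$ on $E^k$ from Theorem~\ref{thm:reg} in an area-type argument, forcing the image to contain a Euclidean ball. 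Route (b) seems more natural within the GCBA framework of the paper.
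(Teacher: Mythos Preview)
Your central claim---that near every $k$-regular point $p\in E^k$ the distance map $\Phi$ restricts to a bi-Lipschitz homeomorphism from a neighborhood of $p$ in $E$ onto an open set in $\mathbb R^k$---is false in general, and this is precisely what makes the GCBA case harder than the CBB case. The paper's own Example~\ref{ex:nsy} and Remark~\ref{rem:homeo} give a counterexample: for the extremal subset $C\cup C_1\cup C_2$, the origin is a $1$-regular point lying in $E^1$, yet the distance function from any point of $C$ (a strainer map, hence noncritical) is never injective on $C\cup C_1\cup C_2$ near the origin, because the two curves $C_1,C_2$ branch apart while sharing the same tangent direction. So no bi-Lipschitz chart exists there. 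The error in your heuristic is the sentence ``the extremal condition~(E) \dots forces tangent cones of $E$ at nearby points to stay close to $T_pE$'': it does not. Tangent cones of extremal subsets in GCBA spaces are badly behaved (see Remark~\ref{rem:dim2}), and the obtuse-angle lemma from CBB geometry, which underlies such stability, fails here.

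What the paper actually obtains is only the \emph{pointwise} estimate of Lemma~\ref{lem:mfd}: at each $x\in E^k$ with $\dim T_xE\le k$, the noncritical map $f$ satisfies $|f(x)f(y)|\ge c(\varepsilon)|xy|$ for $y\in E$ \emph{sufficiently close to $x$}, with no control on how close. One then covers a neighborhood $U\subset E^k$ of a $k$-regular point by the closed sets $U_m=\{x:|f(x)f(y)|\ge c(\varepsilon)|xy|\text{ whenever }|xy|<1/m\}$, on each of which $f$ is locally bi-Lipschitz, and invokes the Baire category theorem (Lytchak's open map theorem \cite{L}) to extract an open dense subset of $U$ on which $f$ is locally bi-Lipschitz. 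The Lipschitz manifold is this dense open set, not a full neighborhood of every $k$-regular point. Rectifiability follows because the $U_m$ still countably cover. Note also that your route~(a) is circular: by Remark~\ref{rem:prf}, Theorem~\ref{thm:dim} is deduced \emph{from} Theorems~\ref{thm:reg} and~\ref{thm:mfd}, so it cannot be used here. Finally, the openness of $\Phi(U\cap E)$ that you flag as the ``main obstacle'' is in fact immediate from Proposition~\ref{prop:open}; the genuine obstacle is the lower Lipschitz bound, and that is exactly what fails uniformly.
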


Compare the above theorems with \cite[Theorems 1.2, 1.3]{LN:geod}.

\begin{rem}
The set of manifold points in $E^k$ does not always have full measure.
See \cite[Example 6.9]{NSY}.
On the other hand, in the CBB case, the set of non-manifold points in $E^k$ has Hausdorff dimension at most $k-1$ (\cite[Theorem 1.1]{F:reg}).
The reason for this difference can be found in Remark \ref{rem:homeo}.
\end{rem}

\begin{rem}\label{rem:prf}
In the proofs of the above theorems, we first show that the Hausdorff dimension of any open subset of an extremal subset is an integer.
This is enough to define the local dimension by \eqref{eq:dim}.
Then we prove Theorems \ref{thm:reg} and \ref{thm:mfd}.
Finally we deduce Theorem \ref{thm:dim} from these two.
\end{rem}

It should be mentioned that there is a notion of wall singularity of GCBA spaces introduced by Nagano \cite{N:wall}.
This notion has some relation with extremal subsets, but we will not treat it in this paper.

\step{Organization}
In Section \ref{sec:pre}, after recalling some basic notions in GCBA geometry, we review the properties of noncritical maps introduced in \cite{F:nc}.
Section \ref{sec:ex} provides several examples of GCBA extremal subsets mentioned above, including the proof of Theorem \ref{thm:hom}.
In Section \ref{sec:prop}, we develop the general theory of GCBA extremal subsets and prove Theorems \ref{thm:dim}, \ref{thm:reg}, and \ref{thm:mfd}.

\step{Acknowledgments}
I would like to thank Professors Koichi Nagano and Alexander Lytchak for their helpful comments.

\section{Preliminaries}\label{sec:pre}

In this paper, we denote by $|pq|$ the distance between $p$ and $q$.
We also denote by $\dist_p:=|p\cdot|$ the distance function from $p$.
The notation $B(p,r)$, $\bar B(p,r)$, and $\partial B(p,r)$ denote the open and closed $r$-ball around $p$ and its boundary, respectively.

\subsection{GCBA spaces}\label{sec:gcba}

In this subsection we recall some basic notions in GCBA geometry.
See \cite{LN:geod}, \cite{LN:top}, \cite{BBI}, and \cite{BH} for more details.

Let $\kappa$ be a real number.
A \textit{CAT($\kappa$) space} is a complete metric space such that any two points with distance $<\pi/\sqrt\kappa$ are joined by a shortest path and any geodesic triangle with perimeter $<2\pi/\sqrt\kappa$ is not thicker than the comparison triangle in the model plane of constant curvature $\kappa$, where $\pi/\sqrt\kappa=\infty$ when $\kappa\le0$.
A \textit{CBA($\kappa$) space} is a locally CAT($\kappa$) space.

A CBA space is called \textit{locally geodesically complete} (resp.\ \textit{geodesically complete}) if any geodesic is locally extendable (resp.\ infinitely extendable), where a geodesic means a locally shortest path.
These two definitions are equivalent if the space is complete.
For example, if any sufficiently small punctured ball in a CBA space is not contractible, then the space is locally geodesically complete.
This assumption is satisfied by a CBA homology manifold (see below).

A \textit{GCBA space} is a separable, locally compact, locally geodesically complete CBA space.
Similarly, in this paper, a \textit{GCAT space} refers to a separable, locally compact, geodesically complete CAT space.

Let $X$ be a GCBA($\kappa$) space and $p\in X$.
We denote by $T_p$ the \textit{tangent cone} at $p$ and by $\Sigma_p$ the \textit{space of directions} at $p$.
Recall that $T_p$ is the Euclidean cone over $\Sigma_p$, and is also isometric to the unique blow-up limit of $X$ at $p$.
In particular, $T_p$ is a GCAT($0$) space and $\Sigma_p$ is a compact GCAT($1$) space.

The \textit{dimension} of $X$ is defined by the Hausdorff dimension, which coincides with the topological dimension.
The \textit{local dimension} of $X$ at $p$ is defined by the formula \eqref{eq:dim} for $E=X$, which is finite and coincides with the dimension of $T_p$.

A metric ball $U$ of radius $r$ in $X$ is called \textit{tiny} if $r<\min\{\pi/100\sqrt\kappa,1\}$ and the closed concentric ball of radius $10r$ is a compact CAT($\kappa$) space.
For two different points $p,q\in U$, we denote by $q'=q'_p\in\Sigma_p$ the direction of the unique shortest path from $p$ to $q$.

For a closed subset $A\subset X$ and $p\in A$, we define the \textit{space of directions} $\Sigma_pA$ of $A$ at $p$ by
\[\Sigma_pA:=\left\{\xi\in\Sigma_p\mid\xi=\lim(q_i)'_p,\ A\setminus\{p\}\ni q_i\to p\right\}.\]
Clearly $\Sigma_pA$ is a closed subset of $\Sigma_p$.
We also define the \textit{tangent cone} $T_pA$ of $A$ at $p$ as the subcone over $\Sigma_pA$ in $T_p$.

Let $\Sigma$ be a compact GCAT($1$) space.
In this paper, we always consider the \textit{$\pi$-truncated metric} on $\Sigma$, that is, the minimum of the original metric and $\pi$.
For the sake of induction, we assume that a $0$-dimensional compact GCAT($1$) space is a finite set of points with pairwise distance $\pi$ that is not a singleton.
Note that any $\xi\in\Sigma$ has at least one \textit{antipode} $\bar\xi\in\Sigma$, i.e., $|\xi\bar\xi|=\pi$.
We denote by $\Ant(\xi)$ the set of all antipodes of $\xi$ in $\Sigma$.

In this paper a \textit{homology manifold} means a separable, locally compact metric space of finite topological dimension whose local homology groups (with integer coefficients) are isomorphic to those of Euclidean space of fixed dimension.
As mentioned above, any CBA homology manifold is locally geodesically complete, and hence is a GCBA space.
Note that since a CBA homology manifold is locally contractible, one can use Poincar\'e duality.

\subsection{Noncritical maps}\label{sec:nc}

In this subsection we review the properties of noncritical maps introduced in \cite{F:nc}.
We first define the antipodal distance.

\begin{dfn}
Let $\Sigma$ be a compact GCAT($1$) space and $\xi,\eta\in\Sigma$.
The \textit{antipodal distance} between $\xi$ and $\eta$ is defined by
\begin{align*}
\overline{|\xi\eta|}:&=\max_{x\in\Sigma}|\xi x|+|\eta x|-\pi\\
&=\max_{\bar\xi\in\Ant(\xi)}|\bar\xi\eta|=\max_{\bar\eta\in\Ant(\eta)}|\xi\bar\eta|.
\end{align*}
The last two equalities follow from the geodesic completeness (see \cite[Lemma 4.1]{F:nc}).
\end{dfn}

Based on Perelman's idea in CBB geometry (\cite{Per:alex}, \cite{Per:mor}), the author \cite{F:nc} introduced the following regularity of distance maps on GCBA spaces.
Suppose $\varepsilon$ and $\delta$ are positive numbers such that $\delta\ll\varepsilon$.
Here the choice of $\delta$ depends only on (local) dimension and $\varepsilon$, which will be determined by the proof of each statement.

\begin{dfn}\label{dfn:nc}
Let $U$ be a tiny ball in a GCBA space and $a_i\in U$ ($1\le i\le k$).
We say $f=(|a_1\cdot|,\dots,|a_k\cdot|):U\to\mathbb R^k$ is \textit{$(\varepsilon,\delta)$-noncritical} at $p\in U\setminus\{a_1,\dots,a_k\}$ if
\begin{enumerate}
\item $\overline{|a_i'a_j'|}<\pi/2+\delta$ at $p$ for any $1\le i\neq j\le k$;
\item there exists $b\in U\setminus\{p\}$ such that $\overline{|a_i'b'|}<\pi/2-\varepsilon$ at $p$ for any $1\le i\le k$.
\end{enumerate}
A map $f$ is called \textit{noncritical} at $p$ if it is $(\varepsilon,\delta)$-noncritical at $p$ for some $\delta\ll\varepsilon$.
Moreover, $f$ is called \textit{noncritical} on $V\subset U$ if it is noncritical at any $p\in V$.
\end{dfn}

In particular, we also say $\dist_q$ ($q\in U\setminus\{p\}$) is \textit{regular} at $p$ if $\min\overline{|q_p'\cdot|}<\pi/2$; otherwise it is \textit{critical} at $p$.
Now Definition \ref{dfn:ext} makes sense.

Note that the noncriticality is an open condition.
This easily follows from the upper semicontinuity of angles and the local geodesic completeness.

To describe the infinitesimal properties of noncritical maps, we introduce

\begin{dfn}\label{dfn:nc'}
Let $\Sigma$ be a compact GCAT(1) space and let $\xi_i\in\Sigma$ ($1\le i\le k$).
We say $\{\xi_i\}_{i=1}^k$ is an \textit{$(\varepsilon,\delta)$-noncritical collection} of $\Sigma$ if
\begin{enumerate}
\item $\overline{|\xi_i\xi_j|}<\pi/2+\delta$ for any $1\le i\neq j\le k$;
\item there exists $\eta\in\Sigma$ such that $\overline{|\xi_i\eta|}<\pi/2-\varepsilon$ for any $1\le i\le k$.
\end{enumerate}
We call $\eta$ a \textit{regular direction} for $\{\xi_i\}_{i=1}^k$.
As in the previous definition, ``$(\varepsilon,\delta)$'' may be omitted.
\end{dfn}

Clearly $f=(|a_1\cdot|,\dots,|a_k\cdot|)$ is an $(\varepsilon,\delta)$-noncritical map at $p$ if and only if $\{a_i'\}_{i=1}^k$ is an $(\varepsilon,\delta)$-noncritical collection of $\Sigma_p$.

\begin{rem}[{\cite[Remark 4.3]{F:nc}}]\label{rem:ineq}
In the situation of Definition \ref{dfn:nc'}, we have $|\xi_i\xi_j|>\pi/2-\delta$ and $|\xi_i\eta|>\pi/2+\varepsilon$.
Moreover, if $k\ge 2$, we also have $|\xi_i\xi_j|<\pi-2\varepsilon$ and $|\xi_i\eta|<\pi-\varepsilon/2$.
\end{rem}

The next lemma is the key property of noncritical collections.
In this paper, $c(\varepsilon)$ denotes various positive constants depending only on (local) dimension and $\varepsilon$.
Similarly, $\varkappa(\delta)$ denotes various positive functions depending only on (local) dimension and $\varepsilon$ such that $\varkappa(\delta)\to0$ as $\delta\to 0$.
Usually $c(\varepsilon)\ll\varepsilon$ and $\varkappa(\delta)\gg\delta$.
Whenever these symbols appear in a statement, it means that there exists such a constant or function for which the statement holds.

In what follows, $\Sigma$ denotes a compact GCAT($1$) space.

\begin{lem}[{\cite[Lemma 4.4]{F:nc}}]\label{lem:ind}
Suppose $\dim\Sigma\ge 1$.
Let $\{\xi_i\}_{i=1}^k$ be an $(\varepsilon,\delta)$-noncritical collection of $\Sigma$ with a regular direction $\eta$.
For $x\in\Sigma$, assume either
\begin{enumerate}
\item $\xi_i,\eta\in B(x,\pi/2+\delta)$ for any $i$; or
\item $\xi_i,\eta\notin B(x,\pi/2-\delta)$ for any $i$.
\end{enumerate}
Then $\{(\xi_i)'_x\}_{i=1}^k$ is a $(c(\varepsilon),\varkappa(\delta))$-noncritical collection with a regular direction $\eta'_x$.
\end{lem}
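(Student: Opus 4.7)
The plan is to verify the two conditions of Definition~\ref{dfn:nc'} for $\{(\xi_i)'_x\}_{i=1}^k\subset\Sigma_x$ with $\eta'_x$ as the candidate regular direction: $\overline{|(\xi_i)'_x (\xi_j)'_x|}<\pi/2+\varkappa(\delta)$ for $i\neq j$ and $\overline{|(\xi_i)'_x \eta'_x|}<\pi/2-c(\varepsilon)$ for each $i$. Using the dual characterization $\overline{|\mu\nu|}=\max_{\bar\mu\in\Ant(\mu)}|\bar\mu\nu|$, it suffices to bound $|\bar\nu (\xi_j)'_x|$ and $|\bar\nu\eta'_x|$ for every antipode $\bar\nu\in\Ant((\xi_i)'_x)$ in $\Sigma_x$.

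The key construction realizes each such $\bar\nu$ as $y'_x$ for a specific $y\in\Sigma$. By geodesic completeness, extend the geodesic $[x,\xi_i]$ past $x$ in the direction of $\bar\nu$ to length $\pi-|x\xi_i|$, producing $y\in\Sigma$ with $y'_x=\bar\nu$ and $|\xi_i y|=\pi$. Since $y\in\Ant(\xi_i)$, the noncriticality of $\{\xi_i\}$ immediately gives
\[
|y\xi_j|\le\overline{|\xi_i\xi_j|}<\pi/2+\delta,\qquad |y\eta|\le\overline{|\xi_i\eta|}<\pi/2-\varepsilon.
\]
As $|\bar\nu (\xi_j)'_x|=\angle_x(y,\xi_j)$ and $|\bar\nu\eta'_x|=\angle_x(y,\eta)$, the problem reduces to bounding these angles via CAT($1$) comparison. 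The spherical law of cosines, with $\cos|xy|=-\cos|x\xi_i|$ and $\sin|xy|=\sin|x\xi_i|$, yields
\[
\cos\tilde\angle_x(y,\xi_j)=\frac{\cos|y\xi_j|+\cos|x\xi_i|\cos|x\xi_j|}{\sin|x\xi_i|\sin|x\xi_j|}
\]
and an analogous expression for $\tilde\angle_x(y,\eta)$.

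The crucial estimate is a uniform lower bound $\sin|x\xi_i|,\sin|x\xi_j|\ge c(\varepsilon)$ on the denominator, which follows from the noncriticality by different routes in the two cases. In case~(1), the triangle inequality applied to $|x\eta|<\pi/2+\delta$ and $|\xi_i\eta|>\pi/2+\varepsilon$ (Remark~\ref{rem:ineq}) yields $|x\xi_i|>\varepsilon-\delta$, which combined with $|x\xi_i|<\pi/2+\delta$ gives $\sin|x\xi_i|\ge c(\varepsilon)$. In case~(2), evaluating the regularity bound $\overline{|\xi_i\eta|}<\pi/2-\varepsilon$ at the test point $y=x$ yields $|x\xi_i|+|x\eta|<3\pi/2-\varepsilon$, so $|x\xi_i|<\pi-\varepsilon+\delta$ using $|x\eta|>\pi/2-\delta$; combined with $|x\xi_i|>\pi/2-\delta$ this again gives $\sin|x\xi_i|\ge c(\varepsilon)$. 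The numerator is bounded below by $-\varkappa(\delta)$ using $\cos|y\xi_j|>-\sin\delta$ together with the sign and magnitude control of $\cos|x\xi_i|\cos|x\xi_j|$ in each case, which yields $\tilde\angle_x(y,\xi_j)<\pi/2+\varkappa(\delta)$; the analogous computation with $\cos|y\eta|>\sin\varepsilon$ yields $\tilde\angle_x(y,\eta)<\pi/2-c(\varepsilon)$.

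The main technical obstacle is the careful bookkeeping of the error terms so that $\varkappa(\delta)\to 0$ as $\delta\to 0$ uniformly across the full parameter range, and verifying that the spherical comparison triangle actually exists throughout (perimeter strictly less than $2\pi$); both ultimately reduce to routine trigonometric estimates using the two-sided antipodal distance bounds.
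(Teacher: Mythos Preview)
The paper does not provide a proof of this lemma; it is quoted as a preliminary result from \cite[Lemma~4.4]{F:nc}. Your argument is correct and is essentially the natural proof one expects: realize every antipode $\bar\nu\in\Ant((\xi_i)'_x)$ in $\Sigma_x$ as $y'_x$ for some $y\in\Ant(\xi_i)$ in $\Sigma$ by extending the shortest path $\xi_ix$ beyond $x$ in the direction $\bar\nu$ to total length $\pi$ (which works since $0<|x\xi_i|<\pi$ in both cases, so the extended local geodesic of length $<\pi$ is minimizing and the limit gives $|y\xi_i|=\pi$), transfer the bounds $\overline{|\xi_i\xi_j|}<\pi/2+\delta$ and $\overline{|\xi_i\eta|}<\pi/2-\varepsilon$ to $|y\xi_j|$ and $|y\eta|$, and then bound $\angle_x(y,\xi_j)$ and $\angle_x(y,\eta)$ from above via the CAT($1$) comparison angle. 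Your treatment of the denominator $\sin|x\xi_i|\sin|x\xi_j|$ in the two cases is correct (the two-sided bounds $\varepsilon-\delta<|x\xi_i|<\pi/2+\delta$ in case~(1) and $\pi/2-\delta\le|x\xi_i|<\pi-\varepsilon+\delta$ in case~(2) both yield $\sin|x\xi_i|\ge c(\varepsilon)$), as is the numerator estimate and the perimeter check $<2\pi-\varepsilon+3\delta$ guaranteeing existence of the comparison triangle. This is, up to bookkeeping, the standard argument.
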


In particular, we have

\begin{cor}\label{cor:ind}
Suppose $\xi,\eta,x\in\Sigma$ satisfies $\overline{|\xi\eta|}<\pi/2-\varepsilon$ and $||\xi x|-\pi/2|<\delta$.
Then $\overline{|\xi'_x\eta'_x|}<\pi/2-c(\varepsilon)$ (regardless of whether $|\eta x|\le\pi/2$ or $|\eta x|\ge\pi/2$).
\end{cor}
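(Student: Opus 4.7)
The plan is to deduce this directly from Lemma \ref{lem:ind} applied with $k=1$, taking the single-element collection $\{\xi_1\} = \{\xi\}$ with regular direction $\eta$. For $k=1$, the first condition in Definition \ref{dfn:nc'} is vacuous, so the hypothesis $\overline{|\xi\eta|}<\pi/2-\varepsilon$ is precisely the statement that $\{\xi\}$ is an $(\varepsilon,\delta)$-noncritical collection with regular direction $\eta$ (for any $\delta>0$). Symmetrically, the conclusion that $\{\xi'_x\}$ is a $(c(\varepsilon),\varkappa(\delta))$-noncritical collection with regular direction $\eta'_x$ reduces to $\overline{|\xi'_x\eta'_x|}<\pi/2-c(\varepsilon)$, which is exactly what we want.

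The only thing to check is that one of the two alternatives in Lemma \ref{lem:ind} holds for the point $x$. The assumption $||\xi x|-\pi/2|<\delta$ gives both $|\xi x|<\pi/2+\delta$ and $|\xi x|>\pi/2-\delta$, so $\xi$ lies in $B(x,\pi/2+\delta)$ and outside the closed ball of radius $\pi/2-\delta$. Now split on $\eta$: if $|\eta x|\le\pi/2$, then $\eta\in B(x,\pi/2+\delta)$ as well, so alternative (1) applies; if $|\eta x|\ge\pi/2$, then $\eta\notin B(x,\pi/2-\delta)$, so alternative (2) applies.

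In either case Lemma \ref{lem:ind} yields the desired bound on $\overline{|\xi'_x\eta'_x|}$, regardless of the position of $\eta$ relative to $x$. There is no real obstacle: the corollary is essentially a book-keeping exercise verifying that the one-dimensional version of the noncritical-collection hypothesis of Lemma \ref{lem:ind} is satisfied under either sign of $|\eta x|-\pi/2$, and the work of handling the two geometric regimes (one direction inside, one direction outside the $\pi/2$-ball around $x$) has already been absorbed into the statement of the lemma.
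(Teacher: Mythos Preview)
Your argument is correct and is exactly the approach the paper intends: the corollary is stated immediately after Lemma \ref{lem:ind} with the phrase ``In particular, we have'' and no further proof, so the reduction to the $k=1$ case of the lemma with the two alternatives covering the two positions of $\eta$ is precisely what is meant.
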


By induction on dimension, the above lemma implies

\begin{prop}[{\cite[Proposition 4.5]{F:nc}}]\label{prop:dir}
Let $\{\xi_i\}_{i=1}^k$ an $(\varepsilon,\delta)$-noncritical collection of $\Sigma$ with a regular direction $\eta$.
Then
\begin{enumerate}
\item $k\le\dim\Sigma+1$;
\item there exists $v\in\Sigma$ such that
\[|v\xi_1|<\pi/2-c(\varepsilon),\quad|v\xi_i|=\pi/2\,\quad|v\eta|>\pi/2+c(\varepsilon)\]
for any $i\ge 2$.
\end{enumerate}
\end{prop}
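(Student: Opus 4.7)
The plan is to prove both assertions by induction, using Lemma \ref{lem:ind} as the key tool for passing from $\Sigma$ to a space of directions of one lower dimension.

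For (1), I induct on $n := \dim\Sigma$. The base case $n = 0$ is immediate: by the convention that $\Sigma$ is a finite set of $\ge 2$ points at pairwise distance $\pi$, as soon as $|\Sigma| \ge 3$ any two distinct points already have antipodal distance $\pi$, violating $\overline{|\xi_i \xi_j|} < \pi/2 + \delta$, so $k \le 1 = n+1$. For the inductive step, choose $\bar\xi_1 \in \Ant(\xi_1)$; the antipodal-distance identities give $|\bar\xi_1 \xi_i| \le \overline{|\xi_1 \xi_i|} < \pi/2 + \delta$ for $i \ge 2$ and $|\bar\xi_1 \eta| \le \overline{|\xi_1 \eta|} < \pi/2 - \varepsilon$, so case~(1) of Lemma \ref{lem:ind} applies at $x=\bar\xi_1$. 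The resulting $(c(\varepsilon),\varkappa(\delta))$-noncritical collection $\{(\xi_i)'_{\bar\xi_1}\}_{i=2}^{k}$ sits in $\Sigma_{\bar\xi_1}$, which has dimension $n - 1$, so the inductive hypothesis gives $k - 1 \le n$, i.e.\ $k \le n + 1$.

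For (2), I induct on $k$. The base case $k = 1$ is trivial: $v = \xi_1$ satisfies $|\xi_1 \eta| > \pi/2 + \varepsilon$ by Remark \ref{rem:ineq}, and the condition on indices $i \ge 2$ is vacuous. For $k \ge 2$, choose $\bar\xi_k \in \Ant(\xi_k)$ and apply Lemma \ref{lem:ind}(1) at $\bar\xi_k$ to get a noncritical collection $\{(\xi_i)'_{\bar\xi_k}\}_{i=1}^{k-1}$ in $\Sigma_{\bar\xi_k}$ with regular direction $\eta'_{\bar\xi_k}$. By the inductive hypothesis (with $(\xi_1)'_{\bar\xi_k}$ as the distinguished direction) there exists $w \in \Sigma_{\bar\xi_k}$ with $|w(\xi_1)'_{\bar\xi_k}| < \pi/2 - c(\varepsilon)$, $|w(\xi_i)'_{\bar\xi_k}| = \pi/2$ for $2 \le i \le k-1$, and $|w\eta'_{\bar\xi_k}| > \pi/2 + c(\varepsilon)$. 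Using geodesic completeness, extend a geodesic from $\bar\xi_k$ in direction $w$ to length $\pi/2$ and take $v$ to be the endpoint; then $|v\bar\xi_k| = \pi/2$ and the angle at $\bar\xi_k$ in $\triangle(\bar\xi_k, v, \xi)$ equals $|w\xi'_{\bar\xi_k}|$ for any relevant $\xi$. For $\xi_k$ itself, antipodality gives $|v\xi_k| \ge |\bar\xi_k\xi_k| - |\bar\xi_k v| = \pi/2$, while hinge comparison in $\mathbb S^2$ (where the spherical law of cosines collapses because $|\bar\xi_k\xi_k|=\pi$) forces $|v\xi_k| \le \pi/2$; hence $|v\xi_k| = \pi/2$.

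The remaining conditions split into upper bounds, namely $|v\xi_1| < \pi/2 - c(\varepsilon)$ and $|v\xi_i| \le \pi/2$ for $2 \le i \le k-1$, and lower bounds, namely $|v\xi_i| \ge \pi/2$ for $2 \le i \le k-1$ and $|v\eta| > \pi/2 + c(\varepsilon)$. The upper bounds follow from CAT($1$) hinge comparison in $\triangle(\bar\xi_k, v, \cdot)$: the spherical law of cosines with $|v\bar\xi_k| = \pi/2$ simplifies to $\cos|\tilde v\,\tilde\cdot| = \sin|\bar\xi_k\,\cdot|\cos|w(\cdot)'_{\bar\xi_k}|$, and the hypotheses on $w$ transfer directly (after tracking constants, which may degrade by a controlled amount). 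The main obstacle is supplying the matching lower bounds, since CAT($1$) hinge comparison is one-sided. The natural strategy is, for each of $\xi_i$ ($2 \le i \le k-1$) and $\eta$, to select an appropriate antipode in $\Sigma$ and combine $|v\,\cdot| + |v\,\overline{\cdot}| \ge \pi$ with an upper bound on $|v\,\overline{\cdot}|$ obtained from a second hinge comparison; choosing the antipode so that $(\overline{\cdot})'_{\bar\xi_k}$ lies on the opposite side of $w$ in $\Sigma_{\bar\xi_k}$ should be possible via Corollary \ref{cor:ind} together with geodesic completeness of $\Sigma_{\bar\xi_k}$. Making this double-antipode argument work uniformly in $i$, and keeping the constants $c(\varepsilon), \varkappa(\delta)$ under control through the nested induction on both $k$ and $\dim\Sigma$, is where I expect the technical difficulty to concentrate.
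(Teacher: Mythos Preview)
Your argument for (1) is fine and matches the intended induction on $\dim\Sigma$.

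For (2), however, there is a genuine gap. Your claim that hinge comparison forces $|v\xi_k|\le\pi/2$ fails: the CAT($1$) triangle comparison requires perimeter $<2\pi$, but here the two known sides already have total length $|\bar\xi_k\xi_k|+|\bar\xi_k v|=\pi+\pi/2=3\pi/2$, so the constraint is violated as soon as $|v\xi_k|\ge\pi/2$ --- precisely the regime you need. In fact the conclusion is false in general: on a circle of length $2\pi+2\epsilon$ (a GCAT($1$) space with the $\pi$-truncated metric), take $\xi_k=0$, $\bar\xi_k=\pi$, and $v=3\pi/2$; then $|v\bar\xi_k|=\pi/2$ but $|v\xi_k|=\pi/2+2\epsilon$. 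So the value of $|v\xi_k|$ genuinely depends on the direction $w$, and your argument does not use anything about $w$ at this step. The same one-sidedness problem is exactly what you flag for the lower bounds on $|v\xi_i|$ ($2\le i\le k-1$) and $|v\eta|$, and the ``double-antipode'' fix you propose is only a sketch; it is not clear how to choose antipodes whose directions at $\bar\xi_k$ land on the correct side of $w$, uniformly and with controlled constants.

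The paper's route (visible in the proof of Proposition~\ref{prop:dirf}(2), which it says is almost identical to the original) avoids this difficulty entirely. It inducts on $\dim\Sigma$ rather than on $k$, and never tries to transport a point from $\Sigma_x$ back into $\Sigma$ via a geodesic of length $\pi/2$. Instead it works inside $\Sigma$ throughout: first one finds a point in
\[
Y=\{x:|x\xi_i|\ge\pi/2\ (i\ge2),\ |x\eta|\ge\pi/2+c(\varepsilon)\}
\]
by moving $\xi_1$ slightly toward $\eta$ and using comparison for angles; then, at any $x\in Y$, Lemma~\ref{lem:ind}(2) makes $\{(\xi_i)'_x\}_{i\ge2}$ noncritical in $\Sigma_x$, and the inductive hypothesis yields directions $v_i\in\Sigma_x$ along which one can decrease $|x\xi_i|$ while keeping the other coordinates fixed and increasing $|x\eta|$. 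This gives $c(\varepsilon)$-openness of $(|\xi_2\cdot|,\dots,|\xi_k\cdot|)$ on $Y$, and one flows into
\[
Z=\{x:|x\xi_i|=\pi/2\ (i\ge2),\ |x\eta|\ge\pi/2+c(\varepsilon)\}.
\]
Finally, choosing $v\in Z$ closest to $\xi_1$ and arguing by contradiction (again via Lemma~\ref{lem:ind}) gives $|v\xi_1|<\pi/2-c(\varepsilon)$. The equalities $|v\xi_i|=\pi/2$ and the lower bound on $|v\eta|$ are then built into the definition of $Z$, so no reverse hinge inequality is ever needed.
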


Using the above infinitesimal properties, one can prove the following theorem, which generalizes the properties of the strainer maps of Lytchak-Nagano \cite{LN:geod}, \cite{LN:top}.
Recall that a map $f:X\to Y$ between metric spaces is called \textit{$\varepsilon$-open} if $f(B(x,r))$ contains $B(f(x),\varepsilon r)$ for any $x\in X$ and any sufficiently small $r>0$.

\begin{thm}[{\cite[Theorems 1.6, 1.7]{F:nc}}]
Let $f:U\to\mathbb R^k$ be an $(\varepsilon,\delta)$-noncritical map at $p\in U$, where $U$ is a tiny ball in a GCBA space.
Then
\begin{enumerate}
\item $k\le\dim T_p$ and $f$ is a $c(\varepsilon)$-open map near $p$;
\item if $k=\dim T_p$, then $f$ is a bi-Lipschitz open embedding near $p$;
\item if $k<\dim T_p$, then $f$ is a Hurewicz fibration near $p$ with contractible fibers.
\end{enumerate}
\end{thm}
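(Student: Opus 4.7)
The plan is to reduce each statement to the infinitesimal level encoded by Proposition \ref{prop:dir}, combined with a careful iterative construction, with an induction on $\dim T_p$ doing the heavy lifting.

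For (1), the bound $k \le \dim T_p$ is immediate from Proposition \ref{prop:dir}(1) applied to the noncritical collection $\{a_i'\}_{i=1}^k \subset \Sigma_p$, using $\dim T_p = \dim \Sigma_p + 1$. For the $c(\varepsilon)$-openness, I would construct a gradient-like displacement: given a small target $v \in \mathbb R^k$, iteratively build a broken geodesic starting at $p$ along which $f$ is steered toward $f(p) + v$. At each step, Proposition \ref{prop:dir}(2) furnishes a direction that decreases a single prescribed coordinate at rate $\ge c(\varepsilon)$ while leaving the others almost fixed; after $k$ adjustments and a fixed-point correction to absorb the cross-terms, the accumulated image differs from $v$ by $o(|v|)$, yielding $c(\varepsilon)$-openness. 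Lemma \ref{lem:ind} and Corollary \ref{cor:ind} ensure that the noncritical regime is preserved along the construction, with only a controlled loss in the constants.

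For (2), with $k = \dim T_p$, the map $f$ is $1$-Lipschitz (each coordinate is) and $c(\varepsilon)$-open by (1). The extra input is that at the tangent level $\{a_i'\}_{i=1}^k$ is a maximal noncritical collection of $\Sigma_p$; applying (1) inductively at the tangent cone forces $T_p$ to be bi-Lipschitz equivalent to $\mathbb R^k$ and the differential $d_pf$ to be a bi-Lipschitz homeomorphism. A standard blow-up/contradiction argument then transfers this rigidity to the macroscopic scale: assuming $|f(q_n) - f(q'_n)|/|q_n q'_n| \to 0$ with $q_n, q'_n \to p$ and passing to the tangent cone contradicts bi-Lipschitzness of $d_pf$, giving $|f(q) - f(q')| \ge c(\varepsilon) |qq'|$ near $p$, hence the bi-Lipschitz open embedding.

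For (3), when $k < \dim T_p$, I would prove the fibration and fiber contractibility statements simultaneously by induction on $\dim T_p - k$. Using the regular direction $\eta$ supplied by noncriticality, adjoin an extra noncritical coordinate to form $\tilde f = (f, |a_{k+1} \cdot|)$ that locally slices $U$ transversally to the fibers of $f$. A canonical contraction of each fiber is then obtained by flowing within the fiber along a Lipschitz vector field that cancels motion in direction $\eta$ against the conservation of $f$, a construction quantified by Proposition \ref{prop:dir}(2). Assembling these fiberwise contractions yields a continuous, fiber-preserving deformation retraction of a neighborhood of the fiber through $p$ onto $\{p\}$, which combined with $c(\varepsilon)$-openness upgrades $f$ to a Hurewicz fibration with contractible fibers. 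The main obstacle is the coordination of constants and inductive hypotheses in this last step: one must verify that the noncritical collection, the regular direction, and the fiberwise vector field vary continuously with the base point and that each inductive reduction only weakens the regularity quantitatively. The uniform passage from $(\varepsilon, \delta)$ to $(c(\varepsilon), \varkappa(\delta))$ supplied by Lemma \ref{lem:ind} is precisely what makes this coordination feasible on a fixed neighborhood of $p$.
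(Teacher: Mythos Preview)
This theorem is not proved in the present paper: it is stated in the preliminaries section with the citation \cite[Theorems 1.6, 1.7]{F:nc} and no proof is given here. The paper simply recalls it as background from the author's earlier work. So there is no proof in this paper to compare your proposal against.

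That said, your sketch is broadly in the spirit of how such results are established in \cite{F:nc} and in the analogous CBB setting (Perelman's Morse theory): part (1) via Proposition~\ref{prop:dir}(1) and an iterative open-map construction driven by the directional estimates of Proposition~\ref{prop:dir}(2); part (2) via a blow-up contradiction using the maximality of the noncritical collection; part (3) via an inductive fibration argument. One caution: in (3) you speak of ``flowing along a Lipschitz vector field,'' but GCBA spaces have no smooth structure and no gradient flow in the CBB sense, so the actual argument proceeds by discrete iterative constructions and homotopies built from broken geodesics rather than genuine flows. Your outline would need to be rewritten in those terms to be a proof, and the contractibility of fibers and the Hurewicz property require more care than a single paragraph suggests. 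But since the paper under review does not itself supply a proof, a detailed comparison is not possible here.
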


\section{Examples}\label{sec:ex}

In this section we give several examples of extremal subsets in GCBA spaces.
Subsection \ref{sec:pt} provides examples of extremal points.
Subsection \ref{sec:conv} deals with extremal subsets arising from Reshetnyak's gluing theorem.
Subsection \ref{sec:top} proves Theorem \ref{thm:hom}.
Subsection \ref{sec:str} is concerned with the measure-theoretic stratification of Lytchak-Nagano \cite{LN:geod}.

Before giving examples, we define a variant of the notion of an extremal subset.
Recall that a \textit{tripod} in a GCAT($1$) space is a triple of points with pairwise distance $\pi$.
The existence of a tripod in the space of directions of a GCBA space is closely related to the existence of a topological singularity in that space; see \cite[Theorems 1.4, 1.5]{LN:top}.

\begin{dfn}\label{dfn:ext'}
Let $X$ be a GCBA space and $E$ a closed subset.
We say that $E$ is \textit{strongly extremal} if it satisfies the following condition:
\begin{enumerate}
\item[(E')]
any point of $E$ has a neighborhood $U$ in $X$ (contained in some tiny ball) such that if $p\in U\cap E$ is a local minimum point of $\dist_q|_E$, where $q\in U\setminus E$, then $\Sigma_p$ has a tripod containing $q'_p$, i.e., $\diam\Ant(q'_p)=\pi$.
\end{enumerate}
\end{dfn}

Any strongly extremal subset is an extremal subset: the former is defined by the diameter of the set of antipodes, whereas the latter is defined by its circumradius (see \cite[9.2.23]{BBI}).
However, the converse is not always true.

\begin{ex}
Consider a regular tetrahedron $\Delta$ with edgelength $2\pi/3$.
Let $\Sigma$ be the $1$-skeleton of $\Delta$ equipped with the intrinsic metric, which is a GCAT($1$) space.
Let $\{\xi_i\}_{i=1}^4$ be the vertices of $\Sigma$.
For any $\xi\in B(\xi_1,\pi/6)$, we have
\[\min\overline{|\xi\cdot|}>\pi/2,\quad\diam\Ant(\xi)<\pi.\]
Consider the Euclidean cone $K$ over $\Sigma$, which is a GCAT($0$) space.
Let $E$ be the subcone over $\{\xi_2,\xi_3,\xi_4\}$.
Let $p$ be the vertex of $K$ and let $q\in K\setminus E$.
If $q'_p\in B(\xi_1,\pi/6)$, then $\dist_q|_E$ attains a minimum at $p$, and the above estimates imply $\min\overline{|q'_p\cdot|}>\pi/2$, whereas $\diam\Ant(q'_p)<\pi$.
Moreover, $E$ satisfies (strong) extremality for any other $q$.
Therefore $E$ is extremal, but not strongly extremal.
\end{ex}

\subsection{Extremal points}\label{sec:pt}

The first examples are extremal points, that is, one-point extremal subsets.

By definition, a point $p$ in a GCBA space is \textit{extremal} if the antipodal distance between any two directions in $\Sigma_p$ is at least $\pi/2$.
Similarly, $p$ is \textit{strongly extremal} if for any direction in $\Sigma_p$ there exists a tripod containing that direction.
We also say $p$ is \textit{maximally extremal} if the antipodal distance between any two directions is $\pi$.
Any maximally extremal point is a strongly extremal point, but the converse is not always true (e.g., the vertex of the Euclidean cone over a circle of length $3\pi$).
As mentioned in the introduction, extremal points are isolated.

One can easily obtain an extremal point by scaling a GCAT($1$) space and taking a cone over it.
For a metric space $X$ and $\lambda>0$, we denote by $\lambda X$ the space $X$ with the metric multiplied by $\lambda$.

\begin{prop}\label{prop:ver}
Let $\Sigma$ be a compact GCAT($1$) space of dimension $\ge 1$.
For $\lambda\ge1$, consider the Euclidean cone $K$ over $\lambda\Sigma$, which is a GCAT($0$) space.
If $\lambda\ge3/2$, then the vertex of $K$ is a strongly extremal point.
Furthermore, if $\lambda\ge2$, the vertex is a maximally extremal point.
\end{prop}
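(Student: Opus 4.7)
The plan is to identify $\Sigma_p$ (the space of directions at the vertex) with $\lambda\Sigma$ equipped with the $\pi$-truncated metric, so that $|\xi\eta|_{\Sigma_p}=\min(\lambda|\xi\eta|_\Sigma,\pi)$; in particular two points of $\Sigma$ are antipodal in $\Sigma_p$ if and only if their $\Sigma$-distance is $\geq\pi/\lambda$. Maximal extremality then reduces to finding, for each pair $\xi,\eta$, a common antipode $x\in\Sigma$ with $|\xi x|_\Sigma,|\eta x|_\Sigma\geq\pi/\lambda$, and strong extremality reduces to finding, for each $\xi$, two points $y_1,y_2\in\Sigma$ with all three pairwise $\Sigma$-distances $\geq\pi/\lambda$.

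For $\lambda\geq 2$ (so $\pi/\lambda\leq\pi/2$), given $\xi,\eta\in\Sigma$ I would take a midpoint $m$ of a shortest path between them, so that $|m\xi|_\Sigma=|m\eta|_\Sigma=|\xi\eta|_\Sigma/2\leq\pi/2$, and pass to an antipode $\bar m$ of $m$ in $\Sigma$ furnished by geodesic completeness. The triangle inequality then yields $|\bar m\xi|_\Sigma,|\bar m\eta|_\Sigma\geq\pi-|\xi\eta|_\Sigma/2\geq\pi/2\geq\pi/\lambda$, so $\bar m$ is a common antipode in $\Sigma_p$, giving maximal extremality.

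For $\lambda\geq 3/2$ (so $\pi/\lambda\leq 2\pi/3$), I would instead start from an antipode $\bar\xi$ of $\xi$ in $\Sigma$ and exploit geodesic completeness at $\bar\xi$: pick any $\zeta\in\Sigma_{\bar\xi}$ together with its antipode $-\zeta\in\Sigma_{\bar\xi}$ (such a pair exists thanks to $\dim\Sigma\geq 1$ and the fact that any geodesic from $\bar\xi$ extends through $\bar\xi$). These match up into a local geodesic through $\bar\xi$; taking $y_1,y_2$ at $\Sigma$-distance $\pi/3$ from $\bar\xi$ on either side of it, the arc $y_1\bar\xi y_2$ is locally minimizing of length $2\pi/3<\pi$, hence globally minimizing in the (globally) CAT($1$) space $\Sigma$, so $|y_1y_2|_\Sigma=2\pi/3$. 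The triangle inequality forces $|\xi y_i|_\Sigma\geq\pi-\pi/3=2\pi/3$ for $i=1,2$, so after scaling and truncating the three points $\xi,y_1,y_2$ form a tripod in $\Sigma_p$ containing $\xi$, proving strong extremality.

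The main obstacle is the strong-extremality case at $\lambda=3/2$, where every inequality becomes tight: $|\xi y_i|_\Sigma\geq 2\pi/3$ just barely promotes to $|\xi y_i|_{\Sigma_p}=\pi$, and likewise for $|y_1 y_2|_{\Sigma_p}$. The construction rests both on geodesic completeness yielding an antipodal direction pairing in $\Sigma_{\bar\xi}$ (which is what makes $y_1\bar\xi y_2$ a genuine local geodesic) and on the global CAT($1$) structure of $\Sigma$ needed to recognize that local geodesic as globally minimizing. The rest is routine bookkeeping with the scaling/truncation dictionary for the cone metric at the vertex.
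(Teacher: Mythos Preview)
Your proof is correct, with one small oversight: in the $\lambda\ge2$ case you assert $|\xi\eta|_\Sigma/2\le\pi/2$, but a compact GCAT($1$) space can have intrinsic diameter larger than $\pi$ (e.g.\ a circle of length $3\pi$). When $|\xi\eta|_\Sigma\ge\pi$ the midpoint $m$ itself already satisfies $|m\xi|_\Sigma=|m\eta|_\Sigma\ge\pi/2$, so that case is trivial; the paper disposes of it with ``We may assume $|\xi\eta|<\pi$.''

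Your constructions differ mildly from the paper's. For $\lambda\ge2$ the paper extends the shortest path $\xi\eta$ past $\eta$ to $\bar\xi$ with $|\xi\bar\xi|=\pi$ and then splits into cases $|\xi\eta|\lessgtr\pi/2$, whereas your midpoint-then-antipode trick is more symmetric and avoids the case split. For $\lambda\ge3/2$ the paper runs a \emph{single} geodesic out of $\xi$: take $\eta$ at distance $2\pi/3$, extend to $\bar\xi$ at $\pi$, then extend once more to $\bar\eta$; the tripod is $\{\xi,\eta,\bar\eta\}$. Your version first jumps to an antipode $\bar\xi$ and then runs an \emph{arbitrary} geodesic through $\bar\xi$ to produce $y_1,y_2$. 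The paper's choice is exactly the special case where this second geodesic is the continuation of the first one, so your argument is slightly more general --- at the cost of needing to justify (via the CAT($1$) hinge comparison, or simply by taking a genuine geodesic extension rather than an arbitrary pair of antipodal directions) that the two rays at $\bar\xi$ really do splice into a local geodesic.
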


\begin{proof}
Let us show the first statement.
It suffices to show that for any $\xi\in\lambda\Sigma$, there exists a tripod containing $\xi$.
Since $\dim\Sigma\ge 1$, we may assume $\xi$ is not isolated.
We use the original metric of $\Sigma$.
For any $\eta\in\Sigma$ with distance $2\pi/3$ from $\xi$, we extend the shortest path $\xi\eta$ beyond $\eta$ to a shortest path of length $\pi$ with endpoint $\bar\xi$.
We again extend the shortest path $\eta\bar\xi$ beyond $\bar\xi$ to a shortest path of length $2\pi/3$ with endpoint $\bar\eta$.
Then the triangle inequality implies $|\xi\bar\eta|\ge2\pi/3$.
Hence $\{\xi,\eta,\bar\eta\}$ is a tripod in the rescaled space.

Let us show the second statement.
We use the original metric of $\Sigma$.
It suffices to show that for any $\xi,\eta\in\Sigma$, there exists a point with distance $\ge\pi/2$ from $\xi$ and $\eta$.
Since $\dim\Sigma\ge 1$, we may assume $|\xi\eta|<\pi$.
We extend the shortest path $\xi\eta$ beyond $\eta$ to a shortest path of length $\pi$ with endpoint $\bar\xi$.
If $|\xi\eta|\le\pi/2$, then the claim is trivial.
If $|\xi\eta|>\pi/2$, we again extend the shortest path $\eta\bar\xi$ beyond $\bar\xi$ to a shortest path of length $\pi/2$ with endpoint $\bar\eta$.
Then the triangle inequality implies $|\xi\bar\eta|\ge\pi/2$, which completes the proof.
\end{proof}

We say that a point in a GCBA space is a \textit{branching point} if a sufficiently small punctured ball around the point is disconnected and in addition no neighborhood is homeomorphic to $\mathbb R$.
Note that a small punctured ball is homotopy equivalent to the space of directions at the center; see \cite[Theorem A]{Kr}.

\begin{prop}\label{prop:br}
Any branching point of a GCBA space is maximally extremal.
\end{prop}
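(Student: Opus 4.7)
The plan is to prove maximal extremality at $p$ by exhibiting, for every pair of directions $\xi, \eta \in \Sigma_p$, a common antipode $\zeta$, i.e.\ a point with $|\xi\zeta| = |\eta\zeta| = \pi$; this is equivalent to $\overline{|\xi\eta|} = \pi$.

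First I would split on $\dim_p X$. When $\dim_p X = 1$, the space $\Sigma_p$ is zero-dimensional, namely a finite set of points with pairwise distance $\pi$; the hypothesis that neighborhoods of $p$ are not homeomorphic to $\mathbb R$ excludes $|\Sigma_p|=2$, so $|\Sigma_p|\ge 3$ and any third point is a common antipode. When $\dim_p X \ge 2$, the theorem of Krakus \cite[Theorem A]{Kr} identifies components of a small punctured neighborhood with components of $\Sigma_p$, so $\Sigma_p$ is disconnected and $\dim\Sigma_p\ge 1$. Using the $\pi$-truncation of the metric I would immediately dispose of two easy subcases: if $\xi, \eta$ lie in the same component then any point of another component is at distance $\pi$ from both, and if $\Sigma_p$ has at least three components then any point of a third component works. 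Only the case $\Sigma_p = C_1 \sqcup C_2$ with $\xi \in C_1$ and $\eta \in C_2$ remains.

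The heart of the argument is the following internal-antipode lemma: every point $\alpha$ in a connected component $C \subset \Sigma_p$ with $|C| \ge 2$ has an antipode lying inside $C$. To prove it, note that $C$ has no isolated points (any isolated point of $C$ would be clopen in $C$ and thus force $C$ to be that singleton by connectedness), so one can start a short shortest geodesic at $\alpha$ within $C$; the geodesic completeness of $\Sigma_p$ then extends it to a locally shortest curve of length $\pi$, which by the CAT($1$) condition is globally minimizing, so its endpoint lies at distance exactly $\pi$ from $\alpha$, and being a continuous image of an interval starting in $C$ the curve never leaves $C$. Since $\dim\Sigma_p\ge 1$ forces at least one of $C_1, C_2$ to contain more than one point, applying the lemma yields an antipode either of $\xi$ inside $C_1$ or of $\eta$ inside $C_2$; in either case, the truncation of the metric between different components places that antipode at distance $\pi$ from the remaining direction, giving the required common antipode.

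The main potential obstacle is guaranteeing that the length-$\pi$ extended geodesic really attains distance $\pi$ at its endpoints; this rests on the standard CAT($1$) fact that any locally geodesic curve of length at most $\pi$ is globally minimizing. The remaining subtleties (that continuous images stay inside connected components, and that a connected metric space with more than one point is nowhere discrete) are elementary point-set topology.
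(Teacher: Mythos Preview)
Your argument is correct, but it proceeds quite differently from the paper's proof. The paper argues by contradiction in two lines: if $\overline{|\xi\eta|}<\pi$ for some $\xi,\eta\in\Sigma_p$, then every $x\in\Sigma_p$ satisfies $|\xi x|+|\eta x|<2\pi$, so $\Sigma_p=B(\xi,\pi)\cup B(\eta,\pi)$; both balls are contractible (in particular connected), and geodesic completeness forces them to overlap (an antipode $\bar\xi$ of $\xi$ lies outside $B(\xi,\pi)$, hence in $B(\eta,\pi)$, and points on the geodesic $\xi\bar\xi$ just before $\bar\xi$ lie in both balls), so $\Sigma_p$ is connected --- contradicting the branching hypothesis via \cite[Theorem~A]{Kr}.

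Your route is instead a direct, constructive case analysis on the components of $\Sigma_p$, with the core step being the ``internal antipode lemma'' that produces an antipode of a given direction inside its own component by geodesic extension. This buys you an explicit common antipode in every case and avoids invoking contractibility of large balls, at the cost of more case-splitting. Two small points worth tightening: the reference is Kramer, not ``Krakus''; and the claim that $|\Sigma_p|=2$ is excluded in the one-dimensional case implicitly uses that a $1$-dimensional GCBA space with $\Sigma_p\cong\mathbb S^0$ is locally homeomorphic to $\mathbb R$ near $p$ (e.g.\ by the topological regularity theorem of Lytchak--Nagano), which the paper also passes over with the phrase ``by definition, we may assume $\dim\Sigma_p\ge 1$.''
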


\begin{proof}
Let $p$ be a branching point.
We may assume $\dim\Sigma_p\ge1$; otherwise $p$ is maximally extremal by the additional condition in the above definition.
If $p$ is not maximally extremal, then there exist $\xi,\eta\in\Sigma_p$ with $\overline{|\xi\eta|}<\pi$.
This means that $\Sigma_p$ is covered by two contractible balls $B(\xi,\pi)$ and $B(\eta,\pi)$.
Since one can always extend a shortest path to length $\pi$, we see that $\Sigma_p$ is connected.
Together with \cite[Theorem A]{Kr} mentioned above, this contradicts the assumption.
\end{proof}

An \textit{isolated topological singularity} of a GCBA space is a non-manifold point such that a small punctured neighborhood is a topological manifold.
Here a manifold point means a point having a neighborhood homeomorphic to Euclidean space.
For example, any non-manifold point of a CBA homology manifold is an isolated topological singularity, due to Lytchak-Nagano \cite[Theorem 1.2]{LN:top}.
Similarly, an \textit{isolated homological singularity} is a point such that a small punctured neighborhood is a homology manifold, but no neighborhood is a homology manifold.
Note that the space of directions at any isolated homological singularity does not have the homology of a sphere, by \cite[Theorem A]{Kr} and the contractibility of a small metric ball.

\begin{prop}\label{prop:pt}
Any isolated topological or homological singularity of a GCBA space is a maximally extremal point.
\end{prop}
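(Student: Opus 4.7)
The plan is to argue by contradiction, in the spirit of Proposition~\ref{prop:br}. Suppose $p$ is an isolated topological (resp.\ homological) singularity that is not maximally extremal; then there exist $\xi,\eta\in\Sigma_p$ with $\overline{|\xi\eta|}<\pi$. By definition of the antipodal distance, every point of $\Sigma_p$ lies at distance less than $\pi$ from $\xi$ or from $\eta$, yielding a cover $\Sigma_p=U\cup V$ with $U:=B(\xi,\pi)$ and $V:=B(\eta,\pi)$. Both sets are open and contractible, each being star-like via the unique geodesics from its center in the CAT($1$) space $\Sigma_p$ of diameter~$\pi$.

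Next I would translate the singularity hypothesis into a statement purely about $\Sigma_p$. The case $\dim\Sigma_p=0$ is handled by Proposition~\ref{prop:br} (since such a $p$ is either a branching point or a $1$-manifold point), so assume $\dim\Sigma_p\ge 1$ and set $k:=\dim_pX\ge 2$. By \cite[Theorem~A]{Kr}, the punctured ball $B(p,r)\setminus\{p\}$ is homotopy equivalent to $\Sigma_p$. Using the cone structure at $p$ to transfer the (homology) manifold structure of the punctured ball to $\Sigma_p$, one identifies $\Sigma_p$ as a closed (homology) $(k-1)$-manifold. The standard identity $H_n(X,X\setminus\{p\})\cong\tilde H_{n-1}(\Sigma_p)$ then converts ``$p$ is an isolated homological singularity'' into ``$\Sigma_p$ does not have the homology of $\mathbb S^{k-1}$''; in the topological case, Lytchak--Nagano's topological regularity theorem \cite[Theorem~1.1]{LN:top} forces $\Sigma_p\not\simeq\mathbb S^{k-1}$, since otherwise $p$ would itself be a manifold point.

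The contradiction is then delivered by classical topological rigidity. The cover $\Sigma_p=U\cup V$ implies $\operatorname{cat}(\Sigma_p)\le 2$, so by the cup-length inequality combined with Poincar\'e duality (using $\mathbb Z_2$-coefficients to handle the nonorientable case, then observing that a $\mathbb Z_2$-homology sphere manifold must be orientable), $\Sigma_p$ has the integral homology of $\mathbb S^{k-1}$. In the topological case, a Seifert--Van Kampen computation exploiting the genuine contractibility of $U,V$ forces $\pi_1(\Sigma_p)$ to be free, while the cup-length argument forces $H^1(\Sigma_p)=0$, so $\pi_1(\Sigma_p)=1$; Hurewicz and Whitehead's theorems then give $\Sigma_p\simeq\mathbb S^{k-1}$. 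Either way, this contradicts the preceding paragraph.

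The main obstacle will be the technical identification of $\Sigma_p$ as a closed (homology) $(k-1)$-manifold, transferring the manifold structure from $B(p,r)\setminus\{p\}$ through the cone structure at $p$ in the spirit of \cite[Theorems~1.4,~1.5]{LN:top}. The remaining ingredients---cup-length rigidity, Poincar\'e duality, and the Hurewicz--Whitehead theorems---are classical.
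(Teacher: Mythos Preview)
Your proposal is correct and follows essentially the same route as the paper: assume $p$ is not maximally extremal, cover $\Sigma_p$ by the two contractible open balls $B(\xi,\pi)$ and $B(\eta,\pi)$, transfer the (homology) manifold structure from the punctured ball to $\Sigma_p$, and then use Poincar\'e duality plus classical homotopy theory to force $\Sigma_p$ to be a (homotopy) sphere, contradicting the singularity hypothesis via \cite[Theorem 1.1]{LN:top} in the topological case.

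The only notable variation is in the algebraic-topology step. The paper defers to the arguments of \cite[Lemma 3.3]{LN:top} (to make $\Sigma_p$ a homology manifold) and \cite[Lemma 8.2]{LN:top} (Poincar\'e duality plus Mayer--Vietoris/van Kampen, as in the proof of Theorem~\ref{thm:hom}) to conclude directly that $\Sigma_p$ is homotopy equivalent to a sphere. You instead invoke the Lusternik--Schnirelmann inequality $\operatorname{cup-length}\le\operatorname{cat}-1$ together with Poincar\'e duality to kill the intermediate cohomology, and then run van Kampen/Hurewicz/Whitehead separately for the topological case. Both arguments are standard and equivalent in strength; your cup-length route is a pleasant alternative to the Mayer--Vietoris computation, and it makes transparent why the homological case only needs $\Sigma_p$ to have the \emph{homology} of a sphere (which is exactly what the paragraph before the proposition records). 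One small point worth making explicit in your write-up: the case $U\cap V=\emptyset$ cannot occur once $\dim\Sigma_p\ge1$, since a contractible closed homology manifold of positive dimension is impossible by Poincar\'e duality; this ensures $\Sigma_p$ is connected so that both the cup-length/PD argument and van Kampen apply cleanly. Also, the precise reference for the manifold transfer is \cite[Lemma~3.3]{LN:top} rather than Theorems~1.4--1.5.
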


\begin{proof}
First we suppose $p$ is an isolated homological singularity.
We may assume $\dim\Sigma_p\ge1$.
If $p$ is not maximally extremal, there exist $\xi,\eta\in\Sigma_p$ with $\overline{|\xi\eta|}<\pi$, which means that $\Sigma_p$ is covered by two contractible open balls $B(\xi,\pi)$ and $B(\eta,\pi)$.
Since a punctured neighborhood of $p$ is a homology manifold, the same argument as in \cite[Lemma 3.3]{LN:top} shows that $\Sigma_p$ is a homology manifold.
Moreover, the same argument as in \cite[Lemma 8.2]{LN:top} using Poincar\'e duality shows that $\Sigma_p$ is homotopy equivalent to a sphere.
However, as explained above, $\Sigma_p$ does not have the homology of a sphere.
This is a contradiction.

Next we suppose $p$ is an isolated topological singularity.
If $p$ is not maximally extremal, then, as shown above, $\Sigma_p$ is homotopy equivalent to a sphere.
Since a punctured neighborhood of $p$ is a manifold, the topological regularity theorem of Lytchak-Nagano \cite[Theorem 1.1]{LN:top} implies that $p$ is a manifold point.
This is a contradiction.
\end{proof}

\subsection{Convex subsets}\label{sec:conv}

The next examples come from the following Reshetnyak's gluing theorem (\cite{R}).

\begin{thm}\label{thm:glu}
Let $X_1$ and $X_2$ be GCBA spaces with curvature $\le\kappa$.
Suppose $X_1$ and $X_2$ contain proper, closed, locally convex subsets that are isometric to each other.
We denote them by the same notation $C$.
Then the gluing of $X_1$ and $X_2$ along $C$, denoted by $X_1\cup_CX_2$, is a GCBA space with curvature $\le\kappa$.
\end{thm}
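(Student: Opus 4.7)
The plan is to verify the four defining properties of a GCBA($\kappa$) space for $Y := X_1 \cup_C X_2$: separability, local compactness, the CBA($\kappa$) condition, and local geodesic completeness. The first three essentially reduce to the classical (global) Reshetnyak gluing theorem via localization; the only genuinely new ingredient is local geodesic completeness. Separability is immediate since $Y$ is the union of two separable sets. For local compactness at a point $p \in C$, a small closed ball in $Y$ is the gluing of small closed balls in $X_1$ and $X_2$ (which are compact by hypothesis) along a closed ball in $C$, hence compact; away from $C$ the property is inherited from the appropriate $X_i$.

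For the CBA($\kappa$) property, I would reduce to the classical Reshetnyak theorem. Given $p \in C$, by local convexity of $C$ in each $X_i$ and the local CAT($\kappa$) structure of $X_i$, one can choose a small closed neighborhood $B_i \subset X_i$ of $p$ so that $B_i$ is a compact CAT($\kappa$) space and $B_i \cap C$ is closed and convex in $B_i$. The classical Reshetnyak theorem then yields that $B_1 \cup_{B_1 \cap C} B_2$ is CAT($\kappa$), and this set is a neighborhood of $p$ in $Y$. For $p \notin C$, CAT($\kappa$) is inherited directly. Crucially, the classical theorem also furnishes an isometric embedding $B_i \hookrightarrow B_1 \cup_{B_1 \cap C} B_2$, which will be needed below.

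For local geodesic completeness, let $\gamma \colon [0,a] \to Y$ be a geodesic and consider extending it past $p := \gamma(a)$. If $p \notin C$, local geodesic completeness of the containing $X_i$ suffices. If $p \in C$, the backward direction $\eta := -\gamma'(a)$ lies in $\Sigma_p X_i$ for some $i$, say $i=1$, since $\Sigma_p Y = \Sigma_p X_1 \cup_{\Sigma_p C} \Sigma_p X_2$. By local geodesic completeness of $X_1$, there exists a geodesic $\tilde\gamma$ in $X_1$ issuing from $p$ whose initial direction $\xi$ is an antipode of $\eta$ in $\Sigma_p X_1$. The isometric embedding $X_1 \hookrightarrow Y$ induces an isometric embedding $\Sigma_p X_1 \hookrightarrow \Sigma_p Y$, because Alexandrov angles are determined entirely by the distance function; consequently $\xi$ remains an antipode of $\eta$ in $\Sigma_p Y$. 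Since $Y$ is CAT($\kappa$), two geodesics from $p$ meeting with antipodal directions form a local geodesic concatenation, so $\gamma * \tilde\gamma$ is the desired extension.

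The main obstacle, on which everything else hinges, is the final claim that an antipode in $\Sigma_p X_1$ remains an antipode in $\Sigma_p Y$. This is reduced to the isometric embedding $X_i \hookrightarrow Y$, which is the standard byproduct of the Reshetnyak construction: any candidate path between two points of $X_i$ that enters $X_{3-i}$ must cross $C$ twice, and by convexity of $C$ together with the triangle inequality inside $X_i$, such a detour is never shorter than a direct path in $X_i$.
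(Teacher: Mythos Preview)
Your proposal is correct and matches the paper's treatment: the paper does not prove this statement at all but simply cites \cite[Theorem 9.1.21]{BBI} and \cite[Theorem 11.1]{BH} for the CAT($\kappa$) part and remarks that local geodesic completeness of the glued space is ``easy to see.'' Your antipode argument for the latter is valid, though a more direct route is to observe that a terminal segment of any geodesic ending at $p\in C$ already lies entirely in some $X_i$ (by the local isometric embedding $X_i\hookrightarrow Y$ and uniqueness of shortest paths), so one can extend it inside $X_i$ and note that the extension remains locally shortest in $Y$ for the same reason.
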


See \cite[Theorem 9.1.21]{BBI} or \cite[Theorem 11.1]{BH} for the proof.
Although the original theorem is not concerned with geodesic completeness, it is easy to see that if $X_1$ and $X_2$ are locally geodesically complete, then so is $X_1\cup_CX_2$.
In this subsection, roughly speaking, we show that $C$ itself, and also its complement and boundary are extremal subsets of $X_1\cup_CX_2$.

\subsubsection*{Special case}
First we consider the special case of doubled spaces.
Let $X$ be a GCBA space and $C$ a proper, closed, locally convex subset.
Let $X_1$ and $X_2$ be two copies of $X$.
We denote by $D_C(X)$ the gluing of $X_1$ and $X_2$ along $C$ and call it the \textit{double} of $X$ along $C$.
We regard $X_1$, $X_2$, and $C$ as subsets of $D_C(X)$.
We denote by $\bar C_i^c$ ($i=1,2$) the closure of the complement of $C$ in $X_i$.
We also denote by $\partial C$ the topological boundary of $C$ in $X$, or equivalently in $D_C(X)$.

A metric ball $U$ in $X$ of radius $r$ around a point of $C$ is called a \textit{normal ball} if the concentric closed ball of radius $10r$ is compact and its intersection with $C$ is convex.
We denote by $\tilde U$ the double of $U$ in $D_C(X)$ and call it a \textit{normal ball} in the double.
Then the distance between $q_1,q_2\in\tilde U$ is given by
\[|q_1q_2|=\max_{x\in C}|q_1x|+|q_2x|\]
if $q_1\in X_1$, $q_2\in X_2$; otherwise it is equal to the original distance of $X$.

\begin{prop}\label{prop:conv}
Let $D_C(X)$ be the double as above.
Then
\begin{enumerate}
\item $C$ is a strongly extremal subset of $D_C(X)$.
\item If $C$ has nonempty interior, $\bar C_i^c$ and $\partial C$ are strongly extremal in $D_C(X)$.
\end{enumerate}
\end{prop}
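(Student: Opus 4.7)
The strategy exploits the canonical isometric involution $r\colon D_C(X)\to D_C(X)$ that swaps $X_1$ and $X_2$ while fixing $C$ pointwise. For each $p\in C$, this induces an isometric involution $r_\ast$ on $\Sigma_p D_C(X)$ fixing $\Sigma_p C$ pointwise, and $\Sigma_p D_C(X)$ is itself a double: writing $\Sigma_i:=\Sigma_p X_i$ and $\Sigma_C:=\Sigma_p C$ for brevity, one has
\[
\Sigma_p D_C(X)=\Sigma_1\cup_{\Sigma_C}\Sigma_2,\qquad |\xi\eta|_{\Sigma_p D_C(X)}=\min_{\zeta\in\Sigma_C}(|\xi\zeta|+|\zeta\eta|)\wedge\pi
\]
for $\xi\in\Sigma_1\setminus\Sigma_C$ and $\eta\in\Sigma_2\setminus\Sigma_C$, the map $r_\ast$ swapping the two summands identically on $\Sigma_C$. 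All tripod constructions below will be built using this symmetry.

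For Part (1), the condition (E$'$) is vacuous unless $p\in\partial C$ (otherwise $\tilde U\setminus C$ is empty near $p$), so assume $p\in\partial C$ and, WLOG, $q\in X_1\setminus C$. The first variation formula applied to geodesics inside $C$ emanating from $p$ yields $|q'_p\zeta|\ge\pi/2$ for every $\zeta\in\Sigma_C$. Setting $\tilde q:=r_\ast(q'_p)\in\Sigma_2$, the isometry property of $r_\ast$ combined with the distance formula gives $|q'_p\tilde q|_{\Sigma_p D_C(X)}=\min_\zeta 2|q'_p\zeta|\wedge\pi=\pi$. Next, using geodesic completeness of $\Sigma_1$ pick an antipode $\bar q\in\Sigma_1$ of $q'_p$; the triangle inequality in $\Sigma_1$ gives $|\bar q\zeta|\ge\pi-|q'_p\zeta|$, so
\[
|\bar q\tilde q|_{\Sigma_p D_C(X)}\ge\min_\zeta(\pi-|q'_p\zeta|+|q'_p\zeta|)\wedge\pi=\pi,
\]
producing the tripod $\{q'_p,\bar q,\tilde q\}$.

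For Part (2), the argument splits by the position of $q$. If $q\in X_i\setminus C$ for some $i$ (say $i=1$), the first variation on $\partial C$ only gives $|q'_p\zeta|\ge\pi/2$ for $\zeta\in\Sigma_p\partial C$; but $\Sigma_C$ is a closed convex subset of the GCAT($1$) space $\Sigma_p X$ whose topological boundary is contained in $\Sigma_p\partial C$, and the nearest-point projection of $q'_p\notin\Sigma_C$ onto $\Sigma_C$ necessarily lands in this boundary, upgrading the bound to all of $\Sigma_C$ and reducing to Part (1). If instead $q\in\operatorname{int}C$, then $q'_p\in\Sigma_C$ by local convexity of $C$, and the hypothesis $|q'_p\zeta|\ge\pi/2$ on $\Sigma_p\partial C$ forces the hemisphere bound $\Sigma_C\subset\bar B(q'_p,\pi/2)$ (a standard fact: a closed convex subset of a CAT($1$) space whose topological boundary is at distance $\ge\pi/2$ from an interior point must be a spherical cap of that radius). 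In particular, any antipode $\bar q\in\Sigma_1$ of $q'_p$ (existing by geodesic completeness) lies outside $\Sigma_C$, and $|\bar q\zeta|\ge\pi-|q'_p\zeta|\ge\pi/2$ for all $\zeta\in\Sigma_C$; applying the Part (1) reflection argument with $\bar q$ in place of $q'_p$ gives $|\bar q\,r_\ast(\bar q)|=\pi$, while $r_\ast$-invariance combined with $q'_p\in\Sigma_C$ yields $|q'_p\,r_\ast(\bar q)|=|q'_p\bar q|=\pi$, so $\{q'_p,\bar q,r_\ast(\bar q)\}$ is the desired tripod.

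The main technical obstacle lies in the two convexity inputs appearing in Part (2): the containment $\partial(\Sigma_C)\subset\Sigma_p\partial C$ of topological boundaries inside $\Sigma_p X$ (needed for the nearest-point projection step) and the spherical-cap characterization $\Sigma_C\subset\bar B(q'_p,\pi/2)$ (needed to guarantee the chosen antipode lies outside $\Sigma_C$). Both should follow from the local convexity of $C$ and standard GCAT($1$) geometry, but they demand careful bookkeeping on how the topological and tangential boundaries of $\Sigma_C$ relate inside the ambient GCAT($1$) space $\Sigma_p X$.
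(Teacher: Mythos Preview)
Your Part (1) is correct and is essentially the paper's argument recast infinitesimally via the reflection $r_\ast$; the tripod $\{q'_p,\bar q,\tilde q\}$ is exactly the paper's tripod built from the reflected point and the geodesic extension.

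Part (2), however, has a genuine gap in the case $q\in\operatorname{int}C$. The ``spherical-cap'' claim $\Sigma_C\subset\bar B(q'_p,\pi/2)$ is not a standard fact, and you do not prove it; you need it so that an \emph{arbitrary} antipode $\bar q\in\Sigma_1$ of $q'_p$ satisfies $|\bar q\,\Sigma_C|\ge\pi/2$. Worse, when $\Sigma_p\partial C=\partial\Sigma_C=\emptyset$ (this occurs whenever $\Sigma_pX$ is disconnected and $\Sigma_C$ is one of its components), every antipode of $q'_p$ obtained by extending a geodesic in $\Sigma_1$ stays in the same component, hence in $\Sigma_C$; then $r_\ast(\bar q)=\bar q$ and your tripod collapses. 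Your case $q\in X_1\setminus C$ also silently assumes $q'_p\notin\Sigma_C$, which is not justified (the initial segment of $pq$ may lie in $C$).

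The paper sidesteps all of this by reversing the order of the construction. Rather than choosing an antipode $\bar q$ first and then trying to force $|\bar q\,\Sigma_C|\ge\pi/2$, it first picks $\xi_i\in\Sigma_pX_i$ with $|\xi_i\Sigma_C|\ge\pi/2$ (such directions exist precisely because $p\in\partial C$, by Lemma~\ref{lem:conv}/Remark~\ref{rem:conv}) and then observes that $|q'_p\xi_i|=\pi$ automatically: any path from $q'_p\in\operatorname{int}\Sigma_C$ to $\xi_i\notin\Sigma_C$ must cross $\partial\Sigma_C$, picking up length $\ge\pi/2$ from $q'_p$ to the crossing point (the hypothesis) and $\ge\pi/2$ from the crossing point to $\xi_i$ (the choice of $\xi_i$). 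This crossing argument requires no cap characterization and handles the disconnected case uniformly (there $|\xi_i\Sigma_C|=\pi$). The fix to your argument is simply to take $\bar q:=\xi_1$ rather than a generic antipode.
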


We first summarize the properties of the infinitesimal structure of convex subsets.
Let $X$ and $C$ be as above and $p\in C$.
The \textit{space of direction} $\Sigma_pC$ and the \textit{tangent cone} $T_pC$ are defined in the usual way (see Subsection \ref{sec:gcba}).
It easily follows from the local convexity of $C$ that $T_pC$ is isometric to the limit of $\lambda C$ under the pointed Gromov-Hausdorff convergence $(\lambda X,p)\to(T_p,o)$ as $\lambda\to\infty$.
In particular, $T_pC$ is convex in $T_p$ and hence $\Sigma_pC$ is $\pi$-convex in $\Sigma_p$ (that is, any shortest path between two points of $\Sigma_pC$ with distance less than $\pi$ is contained in $\Sigma_pC$).

To prove Proposition \ref{prop:conv}(2), we need the following observation.

\begin{lem}\label{lem:conv}
Let $X$ and $C$ be as above.
Then for any $p\in\partial C$, there exists $q\notin C$ such that $\dist_q|_C$ attains a minimum at $p$ and also $|qC|$ is locally uniformly bounded away from zero independent of $p$.
In particular, we have
\[\partial\Sigma_pC=\Sigma_p(\partial C).\]
\end{lem}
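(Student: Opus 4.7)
The plan is to produce $q$ by a projection-plus-geodesic-extension argument, and to deduce the boundary identification from a blow-up.

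Concretely, for the first part: given $p \in \partial C$, I would pick $q_n \in X \setminus C$ with $q_n \to p$; local convexity of $C$ together with local compactness give unique nearest points $p_n \in \partial C$, and $|p_n q_n| \leq |pq_n| \to 0$, so $p_n \to p$. Using geodesic completeness, I would extend the unique shortest path from $p_n$ through $q_n$ to a point $\tilde q_n$ with $|p_n \tilde q_n| = r$, where $r > 0$ is a small fixed number depending only on the tiny ball containing $p$. The key verification is that $p_n$ remains the nearest point of $C$ to $\tilde q_n$: the directions $(\tilde q_n)'_{p_n}$ and $(q_n)'_{p_n}$ coincide in $\Sigma_{p_n}$, so the nearest-point condition $|(q_n)'_{p_n}\,c'_{p_n}| \geq \pi/2$ (for every $c \in C$ close to $p_n$) gives the same bound for $\tilde q_n$, and the CAT($\kappa$) cosine comparison then yields $|c \tilde q_n| \geq |p_n \tilde q_n| = r$. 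Passing to a subsequential limit $\tilde q_n \to q$ by local compactness, continuity of the projection shows that $p$ is the nearest point of $C$ to $q$, with $|qC| = r$. Since $r$ depends only on the tiny ball, the bound $|qC| \geq r$ is locally uniform in $p$.

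For the identification $\partial \Sigma_p C = \Sigma_p(\partial C)$, I would use the fact that under the rescaling $\lambda \to \infty$, $\lambda C$ converges in pointed Gromov--Hausdorff to $T_p C$, a closed convex subcone of $T_p$. For $\Sigma_p(\partial C) \subset \partial \Sigma_p C$: given $\xi = \lim (c_i)'_p \in \Sigma_p C$ with $c_i \in \partial C$, if $\xi$ were in the interior of $\Sigma_p C$ in $\Sigma_p$ then a ball $B(\xi, \varepsilon) \subset \Sigma_p C$ would lift to an open $T_p$-ball around the unit vector $\xi$ contained in $T_p C$, and rescaling by $\lambda_i = 1/|pc_i|$ would force $c_i \in \mathrm{int}(C)$ for large $i$, contradicting $c_i \in \partial C$. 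For the reverse inclusion: given $\xi \in \partial \Sigma_p C$, take directions $\xi_k \notin \Sigma_p C$ with $\xi_k \to \xi$, realize each $\xi_k$ by some $x_k \notin C$ with $(x_k)'_p \to \xi$, and let $y_k \in \partial C$ be the nearest point of $C$ to $x_k$. Rescaling by $\lambda_k = 1/|px_k|$, the limit describes the projection of the unit vector $\xi \in T_p$ onto $T_p C$; since $\xi \in \Sigma_p C \subset T_p C$, that projection is $\xi$ itself, so $(y_k)'_p \to \xi$, giving $\xi \in \Sigma_p(\partial C)$.

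The main obstacle I anticipate is justifying the convergence of nearest-point projections along the rescaled Gromov--Hausdorff limit; this requires controlled CAT($\kappa$) projection estimates (essentially continuity of $\pi_{\lambda C}$ as $\lambda \to \infty$), but such estimates are standard for projections onto closed locally convex sets in CAT spaces.
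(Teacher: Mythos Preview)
Your first part is the paper's argument verbatim: approximate $p$ by points outside $C$, project, extend the geodesic to a fixed length, pass to a subsequential limit.

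For the identity $\partial\Sigma_pC=\Sigma_p(\partial C)$ your route differs from the paper's in execution. For $\partial\Sigma_pC\subset\Sigma_p(\partial C)$ the paper avoids GH-convergence of projections: it chooses $p_i\in C$ and $q_i\notin C$ at the \emph{same} distance from $p$, both with directions tending to $\xi$ (convexity guarantees that a geodesic in a direction outside $\Sigma_pC$ stays outside $C$). Then the nearest point $r_i\in\partial C$ to $q_i$ satisfies $|q_ir_i|\le|q_ip_i|=o(|pp_i|)$ by the angle estimate, giving $(r_i)'_p\to\xi$ directly. Your version via stability of projections under GH-limits works but is heavier machinery for the same conclusion.

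For $\Sigma_p(\partial C)\subset\partial\Sigma_pC$ there is a gap in your sketch. The assertion ``rescaling by $\lambda_i=1/|pc_i|$ would force $c_i\in\mathrm{int}(C)$'' does not follow from Hausdorff convergence $\lambda_iC\to T_pC$ alone: interiors are not stable under Hausdorff limits of closed sets. What makes it true is convexity, and you need to say how. One clean fix: every geodesic from $p$ in a direction belonging to $\Sigma_pC$ lies in $C$ (convexity plus closedness), and since the logarithm at $p$ is a local homeomorphism in a GCBA space, the cone over the open ball $B(\xi,\varepsilon)\subset\Sigma_pC$ is an \emph{open} subset of $X$ contained in $C$; once $(c_i)'_p$ enters that ball, $c_i$ lies in this open set. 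The paper instead runs the contrapositive via the first part of the lemma: from $c_i\in\partial C$ it produces $q_i\notin C$ with $|c_iq_i|/|pc_i|=\varepsilon$ and $c_i$ the nearest point, then shows that the limiting direction of $q_i$ is outside $\Sigma_pC$ yet $(\arctan\varepsilon)$-close to $\xi$. Either completion works; as written, your blow-up sentence is asserting exactly the nontrivial point without proof.
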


\begin{proof}
Since $p\in\partial C$, there is a sequence $q_i\in X\setminus C$ converging to $p$.
Let $p_i\in C$ be the unique closest point to $q_i$ (the uniqueness follows from triangle comparison and the local convexity of $C$).
Extend the shortest path $p_iq_i$ beyond $q_i$ to a shortest path of some fixed length (say $\ell$) with endpoint $r_i$.
It follows from triangle comparison and the local convexity of $C$ that $\dist_{r_i}|_C$ also attains a minimum at $p_i$.
Passing to a subsequence, we may assume that $r_i$ converges to the desired point $q$.
The choice of $\ell$ depends only on the radii of a tiny ball and a normal ball around $p$, and hence $|qC|$ is locally uniformly bounded.

Let us show $\partial\Sigma_pC\subset\Sigma_p(\partial C)$.
In fact this inclusion holds for any closed subset $C$.
If $\xi\in\partial\Sigma_pC$, then there is $\eta\in\Sigma_p\setminus\Sigma_pC$ arbitrarily close to $\xi$.
Note that any point close to $p$ on a shortest path starting in the direction $\eta$ is not contained in $C$.
Using this, one can easily find $p_i\in C$ and $q_i\notin C$ both converging to $p$ such that $(p_i)'_p\to\xi$, $(q_i)_p'\to\xi$, and $|pp_i|=|pq_i|$.
Let $r_i\in\partial C$ be the closest point to $q_i$ in $C$.
Then $(r_i)'_p$ also converges to $\xi$ and hence $\xi\in\Sigma_p(\partial C)$.

Let us show $\Sigma_p(\partial C)\subset\partial\Sigma_pC$.
If $\xi\in\Sigma_p(\partial C)$, then there exists $p_i\in\partial C$ converging to $p$ such that $(p_i)'_p$ converges to $\xi$.
By the first statement, for any $\varepsilon>0$, there exists $q_i\notin C$ with $|p_iq_i|/|pp_i|=\varepsilon$ such that $\dist_{q_i}|_C$ attains a minimum at $p_i$.
We may assume that $(q_i)'_p$ converges to $\eta$, which is not contained in $\Sigma_pC$ but is ($\arctan\varepsilon$)-close to $\xi$.
Since $\varepsilon$ is arbitrary, this implies $\xi\in\partial\Sigma_pC$.
\end{proof}

\begin{rem}\label{rem:conv}
The above lemma shows
\begin{align*}
p\in\partial C&\iff\text{$p$ is a minimum point of $\dist_q|_C$ for some $q\notin C$}\\
&\iff\text{$\Sigma_pC=\emptyset$ or $\max|\Sigma_pC\cdot|\ge\pi/2$}\\
&\iff\text{$\Sigma_pC$ is a proper subset of $\Sigma_p$}.
\end{align*}
Indeed, all the implication $\Longrightarrow$ hold and the last statement implies the first one.
\end{rem}

Now let $D_C(X)$ be the double of $X$ along $C$.
For any $p\in C$, we have
\[\Sigma_p(D_C(X))=D_{\Sigma_pC}(\Sigma_pX),\]
where the metric of the right-hand side is the $\pi$-truncated one (see Subsection \ref{sec:gcba}).

\begin{proof}[Proof of Proposition \ref{prop:conv}]
Let us prove (1).
Define a neighborhood $U$ of a point of $\partial C$ in $D_C(X)$ as the intersection of a tiny ball and a normal ball.
Let $p\in U\cap\partial C$ and $q\in U\setminus C$ and suppose $\dist_q|_C$ attains a local minimum at $p$.
We may assume $q\in X_1$.
Replacing $q$ with a point sufficiently close to $p$ on the shortest path $pq$, we may assume that $\dist_q|_C$ attains a global minimum at $p$.
Let $\bar q\in X_2$ be the point corresponding to $q$.
Then the concatenation $qp\bar q$ of the shortest paths $qp$ and $p\bar q$ is a shortest path.
Extend $qp$ beyond $p$ in $X_1$ to a shortest path with endpoint $r\in U$.
Then $rp\bar q$ is also a shortest path.
Hence we obtain a tripod containing $q'_p$.

Let us prove (2).
In view of (1), it suffices to show that $\bar C_i^c$ is strongly extremal.
Define $U$ as above.
Let $p\in U\cap\partial C$ and $q\in U\setminus\bar C_i^c$ and suppose $\dist_q|_{\bar C_i^c}$ attains a local minimum at $p$.
In view of (1), we may assume $q\in C$.
Then $q'_p\in\Sigma_pC$ by the local convexity of $C$.
Furthermore, since $\dist_q|_{\bar C_i^c}$ attains a local minimum at $p$, we have $\partial\Sigma_pC=\emptyset$ or $|q'_p\partial\Sigma_pC|\ge\pi/2$ (note $\partial\Sigma_pC=\Sigma_p(\partial C)$ by Lemma \ref{lem:conv}).
In the former case, $\Sigma_pC$ is open and closed in $\Sigma_p$, and moreover, proper by Remark \ref{rem:conv}.
Hence there exists a tripod containing $q'_p$ in $\Sigma_p(D_C(X))$.
In the latter case, by Remark \ref{rem:conv}, there exist $\xi_i\in\Sigma_pX_i$ ($i=1,2$) such that $|\xi_i\Sigma_pC|\ge\pi/2$.
Then the triple $\{q'_p,\xi_1,\xi_2\}$ is a tripod.
\end{proof}

\subsubsection*{General case}
Next we consider the general case of glued spaces.
Let $X_1$ and $X_2$ be GCBA spaces and suppose they have proper, closed, locally convex subsets that are isometric to each other, denoted by the same notation $C$.
Let $X_1\cup_CX_2$ denote the gluing of $X_1$ and $X_2$ along $C$.
As before, we regard $X_1$, $X_2$, and $C$ as subsets of $X_1\cup_CX_2$, and use the notation $\bar C_i^c$.
One can also define a normal ball in the same way.

Furthermore, we assume that
\begin{equation}\tag{C}\label{eq:conv'}
\Sigma_pC\text{ is proper in }\Sigma_pX_1\iff\Sigma_pC\text{ is proper in }\Sigma_p X_2
\end{equation}
for any $p\in C$.
By Remark \ref{rem:conv}, this is equivalent to the condition that the boundaries of $C$ with respect to $X_1$, $X_2$, and $X_1\cup_CX_2$ coincide.
We denote it by the same notation $\partial C$.

\begin{prop}\label{prop:conv'}
Let $X_1\cup_CX_2$ be the gluing satisfying the assumption \eqref{eq:conv'}.
Then
\begin{enumerate}
\item $C$ is an extremal subset of $X_1\cup_CX_2$.
\item If $C$ has nonempty interior, $\bar C_i^c$ and $\partial C$ are extremal in $X_1\cup_CX_2$.
\item If $C$ has no interior, $C$ is strongly extremal in $X_1\cup_CX_2$.
\end{enumerate}
\end{prop}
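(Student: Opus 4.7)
The plan is to parallel the proof of Proposition \ref{prop:conv}, replacing the symmetry-of-the-double arguments with invocations of the hypothesis (\ref{eq:conv'}). The key technical tool will be Remark \ref{rem:conv}: whenever $\Sigma_pC$ is proper in $\Sigma_pX_i$, there exists a direction of $\Sigma_pX_i$ at distance at least $\pi/2$ from $\Sigma_pC$; the hypothesis (\ref{eq:conv'}) guarantees this on both sides simultaneously.

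For part (1), I would fix $U$ as the intersection of a tiny ball and a normal ball, take $p\in U\cap C$ a local minimum of $\dist_q|_C$ with $q\in U\setminus C$, and assume $q\in X_1$. The first variation on the convex set $C$ yields $|q'_p\eta|\ge\pi/2$ in $\Sigma_pX_1$ for every $\eta\in\Sigma_pC$; in particular $q'_p\notin\Sigma_pC$, so $\Sigma_pC$ is proper in $\Sigma_pX_1$ and, by (\ref{eq:conv'}), also in $\Sigma_pX_2$. Pick $x\in\Sigma_pX_2$ with $|x\Sigma_pC|\ge\pi/2$ in $\Sigma_pX_2$. Any path in the glued space of directions from $q'_p$ to $x$ must pass through some $\eta\in\Sigma_pC$ and therefore has length at least $\pi/2+\pi/2=\pi$, so $x$ is an antipode of $q'_p$ in $\Sigma_p(X_1\cup_CX_2)$. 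For $\xi\in\Sigma_pX_1$ the same crossing argument gives $|x\xi|\ge\pi/2$, whence $\overline{|q'_p\xi|}\ge\pi/2$; for $\xi\in\Sigma_pX_2$, an antipode of $\xi$ in $\Sigma_pX_2$ plays the role of $x$, and the crossing bound yields $|q'_px|\ge\pi/2$, so $|q'_px|+|x\xi|\ge 3\pi/2$.

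For part (2), the argument splits on the location of $q$. If $q\in X_i\setminus C$, then local convexity forces the closest point in $C$ to $q$ to lie on $\partial C$, so a local minimum of $\dist_q|_{\partial C}$ at $p$ is automatically a local minimum of $\dist_q|_C$, and part (1) applies. If $q$ lies in the interior of $C$, then $q'_p\in\Sigma_pC$ by local convexity and the first variation yields $|q'_p\eta|\ge\pi/2$ in $\Sigma_pX_1$ for every $\eta\in\Sigma_p(\partial C)=\partial\Sigma_pC$ (Lemma \ref{lem:conv}). Remark \ref{rem:conv} then splits this into two subcases: if $\partial\Sigma_pC=\emptyset$, then $\Sigma_pC$ is clopen in each $\Sigma_pX_i$ and directions of $\Sigma_pX_i$ in other components provide antipodes of $q'_p$; if $|q'_p\partial\Sigma_pC|\ge\pi/2$, then directions $\xi_i\in\Sigma_pX_i$ with $|\xi_i\Sigma_pC|\ge\pi/2$ serve as witnesses as in (1), and the same crossing arguments verify $\overline{|q'_p\xi|}\ge\pi/2$ for all $\xi$.

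For part (3), the assumption that $C$ has empty interior implies that $\Sigma_pC$ has empty interior in each $\Sigma_pX_i$, so any direction of $\Sigma_pC$ can be crossed into $\Sigma_pX_i\setminus\Sigma_pC$. To prove strong extremality I would take $p$ a local minimum of $\dist_q|_C$ with $q\in X_1$, extend the shortest path $qp$ past $p$ in $X_1$ to $r\in U$, obtaining an antipode $r'_p\in\Sigma_pX_1$ of $q'_p$, and extend a geodesic from $q'_p$ through a closest point of $\Sigma_pC$ into $\Sigma_pX_2$ up to length $\pi$, producing a second antipode $\bar q^2\in\Sigma_pX_2$; the first-variation bound together with the empty-interior hypothesis forces $\{q'_p,r'_p,\bar q^2\}$ to be a tripod. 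The hardest part will be the subcase of (2) with $q$ in the interior of $C$ and $\partial\Sigma_pC=\emptyset$, where one must carefully exploit the clopen structure of $\Sigma_pC$ together with (\ref{eq:conv'}) to ensure that the candidate antipodes really realize distance $\pi$ in the glued space, ruling out shortcuts through the other side of the gluing.
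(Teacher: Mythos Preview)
Your treatment of (1) and (2) follows the paper's argument closely and is essentially correct. One small point: in (2), the reduction ``a local minimum of $\dist_q|_{\partial C}$ at $p$ is automatically a local minimum of $\dist_q|_C$'' is true but not for the reason you state (that the \emph{global} closest point lies on $\partial C$). The correct justification is that for $c\in C$ near $p$, the shortest path $qc$ crosses $\partial C$ at a point $r$ near $p$ with $|qr|\le|qc|$, so the local minimality on $\partial C$ forces $|qc|\ge|qr|\ge|qp|$.

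Your argument for (3), however, has a genuine gap: the triple $\{q'_p,r'_p,\bar q^2\}$ need not be a tripod, because you have no control over $|r'_p\bar q^2|$. Concretely, take $\Sigma_pX_1=\Sigma_pX_2=\mathbb S^2$ glued at a single point $N$ (so $\Sigma_pC=\{N\}$, which is $\pi$-convex with empty interior; this arises from gluing two copies of $\mathbb R^3$ along a ray). Let $q'_p\in\Sigma_pX_1$ satisfy $|q'_pN|=3\pi/4$; then the unique antipode $r'_p$ in $\Sigma_pX_1$ satisfies $|r'_pN|=\pi/4$, and your $\bar q^2\in\Sigma_pX_2$ lies at distance $\pi/4$ from $N$. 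Hence $|r'_p\bar q^2|\le\pi/4+\pi/4=\pi/2$, and the triple is not a tripod. The ``first-variation bound together with the empty-interior hypothesis'' gives you $|q'_p\Sigma_pC|\ge\pi/2$, but this says nothing about $|r'_p\Sigma_pC|$ or $|\bar q^2\Sigma_pC|$.

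The paper's approach is different: it invokes Lemma~\ref{lem:conv'} (proved by induction on dimension) to find \emph{two antipodal} directions $\xi_1,\xi_2\in\Sigma_pX_2$, \emph{both} at distance $\ge\pi/2$ from $\Sigma_pC$. Then $\{q'_p,\xi_1,\xi_2\}$ is the tripod: each $|q'_p\xi_i|$ is $\ge\pi$ by the crossing bound (since both $q'_p$ and $\xi_i$ are at distance $\ge\pi/2$ from $\Sigma_pC$ on opposite sides), and $|\xi_1\xi_2|=\pi$ already in $\Sigma_pX_2$. This lemma is the missing ingredient in your plan; without something that produces a pair of antipodes simultaneously far from $\Sigma_pC$, the tripod cannot be assembled.
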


We need the following lemma to prove (3).

\begin{lem}\label{lem:conv'}
Let $\Sigma$ be a compact GCAT(1) space of dimension $\ge1$ and $C$ a closed, $\pi$-convex subset with empty interior.
Then there exist $\xi_i\in\Sigma$ ($i=1,2$) with $|\xi_1\xi_2|=\pi$ such that $|\xi_iC|\ge\pi/2$.
\end{lem}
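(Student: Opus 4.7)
The plan is to prove the lemma by induction on $\dim\Sigma$, via the following main construction. Given $c_0\in C$ and a pair of antipodal directions $v_1,v_2\in\Sigma_{c_0}$ (so $|v_1v_2|=\pi$) each at distance $\ge\pi/2$ from $\Sigma_{c_0}C$, define $\xi_i$ to be the endpoint of the length-$\pi/2$ geodesic extension from $c_0$ in direction $v_i$. The concatenation $\xi_1 c_0\xi_2$ is then a local geodesic of length $\pi$, and standard CAT($1$) facts yield $|\xi_1\xi_2|=\pi$.

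To verify $|\xi_iC|\ge\pi/2$, fix $c\in C$. The case $c=c_0$ is trivial. If $0<|c_0c|<\pi$, $\pi$-convexity of $C$ implies that the unique shortest path from $c_0$ to $c$ lies in $C$, so its initial direction $w:=(c)'_{c_0}$ belongs to $\Sigma_{c_0}C$, and hence the hinge angle satisfies $|v_iw|\ge\pi/2$. CAT($1$) angle comparison then bounds the corresponding angle $\tilde\alpha$ in the spherical comparison triangle from below by $\pi/2$, and the spherical law of cosines yields
\[\cos|\xi_ic|=\sin|c_0c|\,\cos\tilde\alpha\le 0,\]
giving $|\xi_ic|\ge\pi/2$. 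If $|c_0c|=\pi$, the triangle inequality directly gives $|\xi_ic|\ge\pi-\pi/2=\pi/2$.

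It remains to produce the antipodal pair $v_1,v_2$. If $C=\emptyset$ the lemma is immediate via geodesic completeness, so assume $C\neq\emptyset$ and pick $c_0\in C$. If $\Sigma_{c_0}C=\emptyset$ (equivalently, $c_0$ is isolated in $C$), any two antipodal directions in $\Sigma_{c_0}$ work; moreover $\pi$-convexity forces every $c\in C\setminus\{c_0\}$ to satisfy $|c_0c|=\pi$ in this case (otherwise the shortest path in $C$ from $c_0$ to $c$ would yield points of $C$ accumulating at $c_0$), so the triangle-inequality branch of the verification covers all $c\in C$. If $\Sigma_{c_0}C\neq\emptyset$ and $\dim\Sigma\ge 2$, apply the inductive hypothesis to the pair $(\Sigma_{c_0},\Sigma_{c_0}C)$: $\Sigma_{c_0}$ is a compact GCAT($1$) space of dimension $\dim\Sigma-1\ge 1$; $\Sigma_{c_0}C$ is closed and $\pi$-convex; and $\Sigma_{c_0}C$ has empty interior in $\Sigma_{c_0}$, because otherwise the local conical structure of $\Sigma$ at $c_0$ would promote an open subset of $\Sigma_{c_0}C$ to an open subset of $C$, contradicting the hypothesis on $C$. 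The base case $\dim\Sigma=1$ reduces to the isolated case, since any $\pi$-convex subset of a $1$-dimensional compact GCAT($1$) space with empty interior is discrete and consists of pairwise antipodal points.

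The main obstacle is the claim that $\Sigma_{c_0}C$ has empty interior in $\Sigma_{c_0}$; this requires propagating openness through the local cone structure at $c_0$, using closedness and $\pi$-convexity of $C$. The remainder is routine casework plus standard CAT($1$) comparison.
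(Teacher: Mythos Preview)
Your proof is correct and follows essentially the same approach as the paper: induction on $\dim\Sigma$, choosing a point $c_0\in C$, obtaining antipodal directions $v_1,v_2\in\Sigma_{c_0}$ at distance $\ge\pi/2$ from $\Sigma_{c_0}C$ by the inductive hypothesis, and then exponentiating to distance $\pi/2$ and invoking triangle comparison. Your identification of the ``main obstacle'' --- that $\Sigma_{c_0}C$ has empty interior in $\Sigma_{c_0}$ --- is exactly right; the paper handles this via the previously established Lemma~\ref{lem:conv} (specifically the equality $\partial\Sigma_pC=\Sigma_p(\partial C)$), which is precisely the ``propagation through the local cone structure'' you allude to.
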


\begin{proof}
The proof is by induction on $\dim\Sigma$.
Note that by Lemma \ref{lem:conv} if $C$ has no interior then so does $\Sigma_pC$ for any $p\in C$.
The case $\dim\Sigma=1$ is straightforward since $C$ is $\pi$-discrete.
In the general case, fix an arbitrary point $p\in C$.
By the inductive assumption there exist $\xi_i\in\Sigma_pC$ ($i=1,2$) with $|\xi_1\xi_2|=\pi$ such that $|\xi_i\Sigma_pC|\ge\pi/2$.
Let $q_i\in\Sigma$ be points in the directions $\xi_i$ with distance $\pi/2$ from $p$, respectively.
Then triangle comparison shows that $|q_1q_2|=\pi$ and $|q_iC|\ge\pi/2$, since $C$ is convex.
\end{proof}

\begin{proof}[Proof of Proposition \ref{prop:conv'}]
To prove (1), we define $U$ as the intersection of a tiny ball and a normal ball as before.
Let $p\in U\cap\partial C$ and $q\in U\setminus C$ and suppose $\dist_q|_C$ attains a local minimum at $p$.
We may assume $q\in X_1$ and $\Sigma_pC\neq\emptyset$.
Then $|q'_p\Sigma_pC|\ge\pi/2$.
It suffices to show that $\overline{|q'_p\xi|}\ge\pi/2$ for any $\xi\in\Sigma_pX_1\cup_{\Sigma_pC}\Sigma_pX_2$.
Note that $\Sigma_pC$ is proper in $\Sigma_pX_2$ by the assumption \eqref{eq:conv'}.

If $\xi\in\Sigma_pX_1$, we choose $\eta\in\Sigma_pX_2$ such that $|\eta\Sigma_pC|\ge\pi/2$ by Remark \ref{rem:conv}.
Then $\eta$ is an antipode for $q'_p$ and $|\xi\eta|\ge\pi/2$.
If $\xi\in\Sigma_pX_2$, we choose an antipode $\eta\in\Sigma_pX_2$ for $\xi$.
Then $|q'_p\eta|\ge\pi/2$.
This completes the proof of (1).

The proof of (2) is the same as that of Proposition \ref{prop:conv}(2), so we omit it.
One can show the existence of a tripod in the case where $q\in C\setminus\partial C$.

Finally we prove (3).
Let $U$, $p$, and $q$ be as in the proof of (1).
By assumption and Lemma \ref{lem:conv}, $\Sigma_pC$ has no interior in $\Sigma_pX_2$.
Since we are assuming $\Sigma_pC\neq\emptyset$, we have $\dim\Sigma_pX_2\ge1$.
Hence by Lemma \ref{lem:conv'} there exist $\xi_i\in\Sigma_pX_2$ ($i=1,2$) with $|\xi_1\xi_2|=\pi$ such that $|\xi_i\Sigma_pC|\ge\pi/2$.
Then $\{q'_p,\xi_1,\xi_2\}$ is a tripod.
\end{proof}

\begin{ques}
Is $C$ strongly extremal in $X_1\cup_CX_2$ when $C$ has nonempty interior?
Note that in view of the proof of Proposition \ref{prop:conv'}(2), if $C$ is strongly extremal, then so are $\bar C_i^c$ and $\partial C$.
\end{ques}

\subsection{Topological singularities}\label{sec:top}

In this subsection we prove Theorem \ref{thm:hom}.

\begin{proof}[Proof of Theorem \ref{thm:hom}]
Suppose $X\setminus X^k_{\hom}$ is closed but not extremal.
Then there are a tiny ball $U$, $p\in U\setminus X^k_{\hom}$, and $q\in U\cap X^k_{\hom}$ such that $\dist_q|_{X\setminus X^k_{\hom}}$ attains a local minimum at $p$ and $\dist_q$ is regular at $p$.
Set $\xi:=q'_p$.

The case $k=1$ is as follows.
Since any point near $p$ on the shortest path $pq$ is contained in $X^1_{\hom}$, we see that $\xi$ is isolated in $\Sigma_p$.
Together with $p\notin X^1_{\hom}$, this implies that $p$ is a branching point (see Proposition \ref{prop:br}).
This is a contradiction.
Thus we assume $k\ge 2$.

We show that $B(\xi,\pi/2)$ in $\Sigma_p$ is a homology ($k-1$)-manifold, or equivalently, the subcone over $B(\xi,\pi/2)$ in $ T_p$ with the vertex removed is a homology $k$-manifold.
This follows from \cite[Lemma 3.3]{LN:top}.
Indeed, since $\dist_q|_{X\setminus X^k_{\hom}}$ attains a local minimum at $p$, the intersection of $B(q,|pq|)$ and a small neighborhood of $p$ is contained in $X^k_{\hom}$, and hence is a homology $k$-manifold (see \cite[Lemma 3.1]{LN:top}; it is actually a topological manifold by Remark \ref{rem:hom}, but we will not use this fact).
Therefore every point of the subcone over $B(\xi,\pi/2)$ except the vertex is contained in a limit of homology manifolds, which is a homology manifold by \cite[Lemma 3.3]{LN:top}.

Now by \cite[Lemma 3.2]{LN:top} based on Poincar\'e duality (see also \cite[Lemma A.1]{N:vol}), we see that for any $0<r<\pi/2$, $\partial B(\xi,r)$ is a homology $(k-2)$-manifold with the same homology as $\mathbb S^{k-2}$.

By assumption, there exists $\eta\in\Sigma_p$ such that $\overline{|\xi\eta|}<\pi/2-\varepsilon$ for some $\varepsilon>0$.
Let $\delta\ll\varepsilon$.
By Corollary \ref{cor:ind}, for any $x\in B(\xi,\pi/2+\delta)\setminus \bar B(\xi,\pi/2-\delta)$, we have $\overline{|\xi'_x\eta'_x|}<\pi/2-c(\varepsilon)$, and hence $|\xi'_x\eta'_x|>\pi/2+c(\varepsilon)$ (see Remark \ref{rem:ineq}).

We cover $\Sigma_p$ by $U=B(\xi,\pi/2+\delta)$ and $V=X\setminus\bar B(\xi,\pi/2-\delta)$.
The ball $U$ is contractible along the shortest paths to $\xi$.
Since $\overline{|\xi\eta|}<\pi/2-\varepsilon$, the complement $V$ is contained in $B(\eta,\pi)$.
In particular, any point of $V$ is joined to $\eta$ by a unique shortest path.
Furthermore, the inequality $|\xi'_x\eta'_x|>\pi/2$ in the previous paragraph shows that such a shortest path remains in $V$.
Hence $V$ is also contractible along the shortest paths to $\eta$.
The same observation shows that $\partial B(\xi,\pi/2-\delta/2)$ is a deformation retract of $U\cap V$.

If $k=2$, the above implies that $\Sigma_p$ is homotopy equivalent to $\mathbb S^1$, which contradicts the assumption $p\notin X^2_{\hom}$.
Suppose $k\ge 3$.
The Mayer-Vietoris exact sequence implies that the reduced homology $\tilde H_\ast(\Sigma_p)$ is equal to $\tilde H_{\ast-1}(\partial B(\xi,\pi/2-\delta/2))$.
Hence $\Sigma_p$ has the same homology as $\mathbb S^{k-1}$.
Since $U\cap V$ is connected for $k\ge3$, the van-Kampen theorem implies that $\Sigma_p$ is simply-connected.
By the Whitehead theorem, $\Sigma_p$ is homotopy equivalent to $\mathbb S^{k-1}$, which contradicts $p\notin X^k_{\hom}$.
\end{proof}

\subsection{Measure-theoretic strata}\label{sec:str}

In this subsection we deal with the measure-theoretic stratification of Lytchak-Nagano \cite{LN:top}.
Note that what we call the measure-theoretic stratification is a dimensional decomposition defined below and is not the rectifiable stratification described in \cite[Theorem 1.6]{LN:geod}.

Let $X$ be a GCBA space and $k$ a positive integer.
We denote by $X^k$ the \textit{$k$-dimensional part} of $X$, that is, the set of points with local dimension $k$, or equivalently, infinitesimal dimension $k$ (see \cite[Theorem 1.2]{LN:geod}).
Set $X^{\ge k}:=\bigcup_{l\ge k}X^l$.
Since the local dimension is upper semicontinuous, $X^{\ge k}$ is closed.

\begin{prop}\label{prop:str}
$X^{\ge k}$ is an extremal subset of $X$.
\end{prop}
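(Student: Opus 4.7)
My plan is to argue by contradiction. The case $k = 1$ is immediate: local geodesic completeness of a GCBA space precludes isolated points, so $X^{\ge 1} = X$ is trivially extremal. Assume henceforth $k \ge 2$, and suppose there exist $p \in X^{\ge k}$ and $q \in U \setminus X^{\ge k}$ in some tiny ball $U$ such that $\dist_q|_{X^{\ge k}}$ attains a local minimum at $p$ while $\dist_q$ is $(\varepsilon, \delta)$-noncritical at $p$. The strategy is to bound the Hausdorff dimension of the open set
\[ A := B(p, r) \cap (X \setminus X^{\ge k}) \]
from two sides for small $r > 0$ and derive a contradiction.

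For the upper bound, $X \setminus X^{\ge k}$ is open and every point in it has local dimension at most $k - 1$; by the integer-valuedness of the local dimension in GCBA spaces (\cite[Theorem 1.1]{LN:geod}), a routine covering argument yields $\dim A \le k - 1$. For the lower bound, I would first extend the singleton $\{q'_p\}$ to a noncritical $k$-tuple $\{q'_p, a_2', \ldots, a_k'\}$ at $p$, which is possible because $p \in X^{\ge k}$ forces $\dim \Sigma_p \ge k-1$: the construction iteratively picks $a_i \in U$ whose direction $(a_i)'_p$ satisfies the pairwise antipodal-distance bound of Definition \ref{dfn:nc'} together with the previously chosen directions and admits a common regular direction, all controlled via Lemma \ref{lem:ind} and Corollary \ref{cor:ind}. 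The resulting map $F := (\dist_q, \dist_{a_2}, \ldots, \dist_{a_k}) : U \to \mathbb R^k$ is noncritical at $p$, so by the noncritical map theorem it is $c$-open near $p$, i.e., $F(B(p, r)) \supset B(F(p), c r)$ for small $r > 0$ (with $c = c(\varepsilon')$ for appropriately deteriorated parameters). The local minimum condition forces $F(X^{\ge k} \cap B(p, r)) \subset \{y_1 \ge \dist_q(p)\}$, whence
\[ F(A) \supset B(F(p), c r) \cap \{y_1 < \dist_q(p)\}, \]
a Euclidean open half-ball of Hausdorff dimension $k$. Since $F$ is Lipschitz, this gives $\dim A \ge k$, contradicting the upper bound.

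The main obstacle is the construction of the noncritical $k$-tuple extending $\{q'_p\}$ at $p$. This is the standard iterative procedure analogous to the construction of strainer maps in \cite{LN:geod}, but one has to track carefully how the noncriticality parameters deteriorate at each step and ensure that a common regular direction persists throughout the induction---something that ultimately relies on the abundance of directions in the GCAT($1$) space $\Sigma_p$ of dimension at least $k - 1$ together with the infinitesimal estimates of Proposition \ref{prop:dir}.
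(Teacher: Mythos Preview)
Your approach is genuinely different from the paper's, and the central step---extending $\{q'_p\}$ to a noncritical $k$-collection in $\Sigma_p$ using only $\dim\Sigma_p\ge k-1$---is not ``standard'' and is in fact false without further input. Take $\Sigma_p$ to be the spherical suspension of $\Sigma':=(3/2)\mathbb S^1$ (a circle of length $3\pi$), so $\dim\Sigma_p=2$ and $\dim T_p=3$, and let $\xi_1=q'_p$ be the north pole $N$. Since $\Ant(N)=\{S\}$, the singleton $\{N\}$ is noncritical with regular direction $S$. However, for any $\xi_2=(\sigma_2,t_2)$ with $\overline{|N\xi_2|}=\pi-t_2<\pi/2+\delta$ and any $\eta=(\sigma_\eta,t_\eta)$ with $\overline{|N\eta|}=\pi-t_\eta<\pi/2-\varepsilon$, one has $\overline{|\xi_2\eta|}\ge\pi/2-\delta$: the antipodes of $\xi_2$ are the points $(\sigma_y,\pi-t_2)$ with $\sigma_y$ ranging over an arc of length $\pi$ in $\Sigma'$, so some $\sigma_y$ satisfies $|\sigma_y\sigma_\eta|_{\Sigma'}\ge\pi/2$, and the suspension distance formula then gives
\[
\cos|\bar\xi_2\eta|=-\cos t_2\cos t_\eta+\sin t_2\sin t_\eta\cos|\sigma_y\sigma_\eta|\le\sin\delta.
\]
Thus no $(\varepsilon,\delta)$-noncritical $2$-collection containing $N$ exists for $\delta\ll\varepsilon$, let alone a $3$-collection. (In this particular cone every point has local dimension $3$, so the hypotheses of the proposition happen to be vacuous there; the point is that your extension step cannot follow from the dimension bound alone, and you give no argument for how the local-minimum condition would supply what is missing.)

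The paper sidesteps this obstacle entirely by working inside $\Sigma_p$. The local-minimum condition implies, via \cite[Lemma 11.5]{LN:top}, that $\dim B(q'_p,\pi/2)\le k-2$ in $\Sigma_p$. Regularity of $\dist_q$ gives $\eta$ with $\overline{|q'_p\eta|}<\pi/2-\varepsilon$, so that $\Sigma_p=B(q'_p,\pi/2)\cup B(\eta,\pi-\varepsilon)$. The decisive observation is that geodesic extension through $\eta$ produces a surjective Lipschitz map from the annulus $B(\eta,\pi-\varepsilon/2)\setminus B(\eta,\pi-\varepsilon)$---which lies in $B(q'_p,\pi/2)$ and hence has dimension at most $k-2$---onto all of $B(\eta,\pi-\varepsilon)$. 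This forces $\dim\Sigma_p\le k-2$, contradicting $p\in X^{\ge k}$, and no extension of noncritical collections is ever required.
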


\begin{proof}
Suppose $X^{\ge k}$ is not extremal.
Then there are a tiny ball $U$, $p\in U\cap X^{\ge k}$, and $q\in U\setminus X^{\ge k}$ such that $\dist_q|_{X^{\ge k}}$ attains a local minimum at $p$ and $\dist_q$ is regular at $p$.
Set $\xi:=q'_p$.

The case $k=1$ is trivial.
The case $k=2$ follows from the same argument as in the second paragraph of the proof of Theorem \ref{thm:hom}: in this case $p$ is a branching point.
Thus we assume $k\ge 3$.

We show that $B(\xi,\pi/2)$ in $\Sigma_p$ has dimension at most $k-2$.
By assumption, the intersection of $B(q,|pq|)$ and a small neighborhood of $p$ has dimension at most $k-1$.
Together with \cite[Lemma 11.5]{LN:top}, this implies that the subcone over $B(\xi,\pi/2)$ in $T_p$ with the vertex removed has dimension at most $k-1$.
Hence the claim follows.

By assumption, there exists $\eta\in\Sigma_p$ such that $\overline{|\xi\eta|}<\pi/2-\varepsilon$ for some $\varepsilon>0$.
Thus $\Sigma_p$ is covered by $B(\xi,\pi/2)$ and $B(\eta,\pi-\varepsilon)$.
In particular, the annulus $B(\eta,\pi-\varepsilon/2)\setminus B(\eta,\pi-\varepsilon)$ has dimension at most $k-2$.
By geodesic completeness and triangle comparison, one can construct a surjective Lipschitz map from this annulus to $B(\eta,\pi-\varepsilon)$.
Hence the dimension of $B(\eta,\pi-\varepsilon)$ is at most $k-2$.
Therefore we have $\dim\Sigma_p\le k-2$, which is a contradiction.
\end{proof}

Nevertheless, the closure of $X^k$ is not extremal in general.

\begin{ex}\label{ex:str}
Let $S_1$ and $S_2$ be two copies of the unit sphere $\mathbb S^n$, where $n\ge 2$.
Take antipodal points $\xi_1,\eta_1\in S_1$ and $\xi_2,\eta_2\in S_2$.
Let $I$ be the interval of length $<\pi$ with endpoints $\zeta_1$ and $\zeta_2$.
Identify $\xi_1$ with $\xi_2$, $\eta_1$ with $\zeta_1$, and $\eta_2$ with $\zeta_2$.
The resulting space $\Sigma$ is a GCAT($1$) space.
Consider the Euclidean cone $K$ over $\Sigma$.
Then the closure $\bar K^2$ of $K^2$ is the subcone over $I$.
Let $q\in K$ be a point in the direction $\xi_1=\xi_2$.
Then $\dist_q|_{\bar K^2}$ attains a minimum at the vertex $p$ of $K$, but we have $\min\overline{|q'_p\cdot|}<\pi/2$.
\end{ex}

\section{Properties}\label{sec:prop}

In this section we develop the general theory of extremal subsets in GCBA spaces and prove Theorems \ref{thm:dim}, \ref{thm:reg}, and \ref{thm:mfd}.

First we discuss the infinitesimal structure of extremal subsets.

\begin{dfn}\label{dfn:extf}
Let $\Sigma$ be a compact GCAT($1$) space and $F$ a closed subset.
We say that $F$ is \textit{extremal} if it satisfies the following conditions:
\begin{enumerate}
\item[(F1)] if $p\in F$ is a local minimum point of $\dist_q|_F$, where $q\in\Sigma\setminus F$ and $|pq|<\pi$, then it is a critical point of $\dist_q$;
\item[(F2)] for any $q\in\Sigma$ with $|qF|>\pi/2$, we have $\min\overline{|q\cdot|}\ge\pi/2$; if $F=\emptyset$, then we require $\min\overline{|\ ,\ |}\ge\pi/2$.
\end{enumerate}
Note that $\Sigma$ itself is regarded as an extremal subset of $\Sigma$.
\end{dfn}

This definition will be used when considering spaces of directions.
Whenever we use this definition, it will be stated explicitly; otherwise we use Definition \ref{dfn:ext}.

\begin{rem}
The condition (F1) is equivalent to the condition (E) of Definition \ref{dfn:ext} for GCAT($1$) spaces.
The condition (F2) is necessary to prove Proposition \ref{prop:extf} below (see also the next remark).
\end{rem}

\begin{rem}
For extremal subsets in CBB($1$) spaces, the condition corresponding to (F2) is required only when $F$ is empty or one-point (see \cite[Definition 1.1]{PP:ext}).
This is because in the CBB case the condition corresponding to (F1) implies that $\max|F\cdot|\le\pi/2$ as long as $F$ contains at least two points (\cite[Proposition 1.4.1]{PP:ext}).
On the other hand, in the GCBA case, (F1) does not imply (F2).
For example, the subset $I$ of $\Sigma$ in Example \ref{ex:str} satisfies (F1) but not (F2).
\end{rem}

For a closed (not necessarily extremal) subset $E$ of a GCBA space and $p\in E$, we denote by $\Sigma_pE$ the \textit{space of directions} of $E$ at $p$ and by $T_pE$ the \textit{tangent cone} of $E$ at $p$ (see Subsection \ref{sec:gcba}).
Note that they are closed subsets of $\Sigma_p$ and $T_p$, respectively.
The following is a counterpart to \cite[Proposition 1.4]{PP:ext}.

\begin{prop}\label{prop:extf}
Let $E$ be a closed subset of a GCBA space.
Then $E$ is extremal in the sense of Definition \ref{dfn:ext} if and only if $\Sigma_pE$ is extremal in $\Sigma_p$ for any $p\in E$ in the sense of Definition \ref{dfn:extf}.
\end{prop}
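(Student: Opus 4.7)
The plan is to verify the two implications separately, relying on two main tools: the first variation formula, which translates between metric conditions on $E \subset X$ and angular conditions on $\Sigma_p E \subset \Sigma_p$, and Corollary \ref{cor:ind}, which propagates antipodal inequalities from $\Sigma_p$ into the iterated space of directions $\Sigma_{\zeta^*}\Sigma_p$.

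For the reverse direction, I will assume $\Sigma_p E$ is extremal in $\Sigma_p$ for every $p \in E$ and suppose for contradiction that $E$ is not extremal: some $p \in U \cap E$ and $q \in U \setminus E$ yield a local minimum of $\dist_q|_E$ at $p$ with $\min\overline{|\xi \cdot|}_{\Sigma_p} < \pi/2 - \varepsilon$, where $\xi := q'_p$. The first variation formula applied to sequences $E \ni p_i \to p$ with $(p_i)'_p \to \zeta$ forces $|\xi\zeta| \ge \pi/2$ for every $\zeta \in \Sigma_p E$, so $\xi \notin \Sigma_p E$ and $|\xi, \Sigma_p E| \ge \pi/2$. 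If $\Sigma_p E = \emptyset$ or this inequality is strict, condition (F2) immediately supplies the contradiction. Otherwise $|\xi, \Sigma_p E| = \pi/2$; a minimizer $\zeta^* \in \Sigma_p E$ is then a local minimum of $\dist_\xi|_{\Sigma_p E}$ with $|\xi\zeta^*| < \pi$, so (F1) forces $\min\overline{|\xi'_{\zeta^*}\cdot|}_{\Sigma_{\zeta^*}\Sigma_p} \ge \pi/2$. Choosing $\eta \in \Sigma_p$ with $\overline{|\xi\eta|} < \pi/2 - \varepsilon$ and applying Corollary \ref{cor:ind} at $x = \zeta^*$ (where $|\xi\zeta^*| = \pi/2$) then gives $\overline{|\xi'_{\zeta^*}\eta'_{\zeta^*}|} < \pi/2 - c(\varepsilon)$, contradicting criticality. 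The key observation is that Corollary \ref{cor:ind} precisely bridges the two layers of criticality in the boundary case $|\xi\zeta^*| = \pi/2$.

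For the forward direction, I will assume $E$ is extremal and fix $p \in E$. To verify (F2): for $\eta \in \Sigma_p$ with $|\eta, \Sigma_p E| > \pi/2$, set $q_s := \exp_p(s\eta)$. A compactness argument shows that any $p' \in E$ sufficiently close to $p$ has $(p')'_p$ close to $\Sigma_p E$, hence at angular distance $> \pi/2$ from $\eta$; first variation then forces $|q_s p'| > |q_s p|$, making $p$ a local minimum of $\dist_{q_s}|_E$. Extremality of $E$ yields $\min\overline{|(q_s)'_p \cdot|}_{\Sigma_p} \ge \pi/2$, and letting $s \to 0$ (with lower semicontinuity of the antipodal distance) gives $\min\overline{|\eta \cdot|} \ge \pi/2$. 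The case $\Sigma_p E = \emptyset$ follows similarly, using density of $\{q'_p : q \in U \setminus \{p\}\}$ in $\Sigma_p$ by geodesic completeness. To verify (F1), given $\xi \in \Sigma_p E$ a local minimum of $\dist_\eta|_{\Sigma_p E}$ with $\eta \in \Sigma_p \setminus \Sigma_p E$ and $|\xi\eta| < \pi$, I will pass to the tangent cone: under rescaling $(\lambda_i X, p) \to (T_p, o)$, the extremality of $E$ transfers (after perturbing a non-strict local minimum to a strict one and using lower semicontinuity of the antipodal distance) to the statement that $T_p E$ is extremal in $T_p$. Placing $\hat\eta$ at the appropriate position on the ray of $\eta$ (radius $1/\cos|\xi\eta|$ when $|\xi\eta| < \pi/2$) makes the unit vector $\hat\xi$ a local minimum of $\dist_{\hat\eta}|_{T_p E}$ by the Euclidean cosine computation in the flat slice of $T_p$ spanned by $\xi$ and $\eta$. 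Exploiting the identification $\Sigma_{\hat\xi} T_p \cong \Sigma_\xi \Sigma_p \ast \mathbb S^0$ as a spherical suspension whose equator is $\Sigma_\xi \Sigma_p$, the criticality of $\dist_{\hat\eta}$ at $\hat\xi$ descends to criticality of $\dist_\eta$ at $\xi$ in $\Sigma_p$, since the antipodes in the suspension of a point on the equator are exactly its equatorial antipodes.

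The main obstacle will lie in the forward direction (F1): placing $\hat\eta$ so that $\hat\xi$ is a genuine local minimum of $\dist_{\hat\eta}$ on $T_p E$ (the radial construction works cleanly only for $|\xi\eta| < \pi/2$; when $|\xi\eta| \ge \pi/2$ one must displace $\hat\eta$ off the ray or argue via the infinitesimal structure directly), and carrying out the perturbative reduction of non-strict local minima in the rescaled limit to actual local minima in the approximating spaces.
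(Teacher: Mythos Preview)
Your reverse direction and your treatment of (F2) are correct and agree with the paper (which simply calls (F2) ``clear'').

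The gap is in (F1). Your plan is to first show $T_pE$ is extremal in $T_p$ by perturbing a non-strict local minimum to a strict one and lifting via $(\lambda_iX,p)\to(T_p,o)$, then descend through the suspension $\Sigma_{\hat\xi}T_p\cong S(\Sigma_\xi\Sigma_p)$. The perturbation step does not work: after moving $v$ to some $v'$ to force a strict minimum at some new $x'$, nothing links regularity of $\dist_{v'}$ at $x'$ back to regularity of $\dist_v$ at $x$, since $(v')'_{x'}$ lives in a different space of directions and no semicontinuity bridges the two. Put differently, if you lift without perturbing, the limit of approximate minimizers $x_i\in\lambda_iE$ is only \emph{some} minimizer of $\dist_v|_{T_pE}$ (using $\lim\lambda_iE\subset T_pE$), not necessarily $x$, and you have no regularity information there. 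The paper's remedy is a single maneuver that dissolves both obstacles you flag (Claim~\ref{clm:extf}): replace $\eta$ by a point $\eta_1$ on the shortest path $\xi\eta$ very close to $\xi$. Because $(\eta_1)'_\xi=\eta'_\xi$, regularity at $\xi$ is unchanged; but now $|\xi\eta_1|<\pi/2$, $\xi$ is a \emph{global} minimum of $\dist_{\eta_1}|_{\Sigma_pE}$, and \emph{every} minimizer lies in $B(\xi,\delta|\xi\eta_1|)$, where $\dist_{\eta_1}$ is uniformly regular. Setting $v=\eta_1/\cos|\xi\eta_1|$, every minimizer of $\dist_v|_{T_pE}$ is then regular, so lifting $v$ to $v_i$ along $p_i\in E$ with $(p_i)'_p\to\xi$ produces regular local minimizers of $\dist_{v_i}|_{|pp_i|^{-1}E}$, contradicting the extremality of $E$ directly --- no intermediate extremality of $T_pE$ and no suspension descent are needed.
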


\begin{proof}
First we prove the ``if'' part.
Suppose $E$ is not extremal.
Then there are a tiny ball $U$, $p\in U\cap E$, and $q\in U\setminus E$ such that $\dist_q|_E$ attains a local minimum at $p$ and $\dist_q$ is regular at $p$.
By the condition (F2) of Definition \ref{dfn:extf}, we may assume that $\Sigma_pE$ is nonempty.
Setting $\xi:=q'_p$, we have $|\xi\Sigma_pE|\ge\pi/2$ and $\overline{|\xi\eta|}<\pi/2$ for some $\eta\in\Sigma$.
Together with the condition (F2), this implies $|\xi\Sigma_pE|=\pi/2$.
Let $\zeta\in\Sigma_pE$ be a closest point to $\xi$.
Then Corollary \ref{cor:ind} implies that $\overline{|\xi'\eta'|}<\pi/2$ at $\zeta$, which contradicts the extremality of $\Sigma_pE$.

Next we prove the ``only if'' part.
It is clear that $\Sigma_pE$ satisfies the condition (F2) of Definition \ref{dfn:extf}.
Suppose $\Sigma_pE$ does not satisfy the condition (F1).
Then there are $\xi\in\Sigma_pE$ and $\eta\in\Sigma_p\setminus\Sigma_pE$ with $|\xi\eta|<\pi$ such that $\dist_\eta|_{\Sigma_pE}$ attains a local minimum at $\xi$ and $\dist_\eta$ is regular at $\xi$.
Let $\varepsilon>0$ be such that $\min\overline{|\eta'_\xi\cdot|}<\pi/2-\varepsilon$.
We show the following claim.

\begin{clm}\label{clm:extf}
For any $\delta>0$, let $\eta_1$ be a point sufficiently close to $\xi$ on the shortest path $\xi\eta$.
Then we have
\begin{enumerate}
\item $|\xi\eta_1|<\pi/2$;
\item $\dist_{\eta_1}|_{\Sigma_pE}$ attains a (global) minimum at $\xi$;
\item any minimum point of $\dist_{\eta_1}|_{\Sigma_pE}$ is contained in $B(\xi,\delta|\xi\eta_1|)$;
\item $\dist_{\eta_1}$ is regular on $B(\xi,c(\varepsilon)|\xi\eta_1|)$.
\end{enumerate}
\end{clm}

\begin{proof}[Proof of Claim \ref{clm:extf}]
Suppose $\eta_1$ is sufficiently close to $\xi$ on the shortest path $\xi\eta$.
Then (1) is clear and (2) follows from the triangle inequality.

To prove (3), suppose there exist a constant $c>0$ and a sequence $\eta_i\in \xi\eta$ converging to $\xi$ such that a minimum point $\xi_i$ of $\dist_{\eta_i}|_{\Sigma_pE}$ is not contained in $B(\xi,c|\xi\eta_i|)$ (but it is contained in $\bar B(\xi,2|\xi\eta_i|)$).
The triangle inequality shows that the concatenation $\xi_i\eta_i\eta$ of shortest paths is a shortest path.
Consider the pointed Gromov-Hausdorff convergence $(|\xi\eta_i|^{-1}\Sigma_p,\xi)\to(T_\xi,o_\xi)$, where $o_\xi$ is the vertex of $T_\xi$.
Then $\xi\eta$ converges to a ray emanating from $o_\xi$, whereas $\xi_i\eta_i\eta$ converges to a ray emanating from $\lim\xi_i\neq o_\xi$.
However, these two rays coincide beyond $\lim\eta_i$, which contradicts the cone structure of $T_\xi$.

Similarly, (4) is proved by contradiction.
Indeed, the distance function from $\eta'_{\xi}$ is regular on the $c(\varepsilon)$-neighborhood of $o_\xi$ in $T_\xi$ (see \cite[Lemma 5.4]{F:nc}).
Since the regularity is an open condition, one can lift it to a neighborhood of $\xi$.
\end{proof}

Set $v:=\frac{\eta_1}{\cos|\xi\eta_1|}\in T_p$.
Then any minimum point of $\dist_v|_{T_pE}$ is a minimum point of $\dist_{\eta_1}|_{\Sigma_pE}$ and hence the claim implies that $\dist_v$ is regular at such a minimum point.
Take $p_i\in E$ converging to $p$ such that $(p_i)'_p\to\xi$.
Consider the pointed Gromov-Hausdorff convergence $(|pp_i|^{-1}X,p)\to(T_p,o_p)$ and choose $v_i\in|pp_i|^{-1}X$ converging to $v$.
Then $\dist_{v_i}$ is regular at any minimum point of $\dist_{v_i}|_{|pp_i|^{-1}E}$ as it converges to a minimum point of $\dist_v|_{T_pE}$ (note that $\lim|pp_i|^{-1}E$ is contained in $T_pE$; see also Question \ref{ques:cone} below).
This contradicts the extremality of $E$.
\end{proof}

\begin{rem}
The strong version of Definition \ref{dfn:extf} corresponding to Definition \ref{dfn:ext'} is given as follows:
\begin{enumerate}
\item replace ``$p$ is critical for  $\dist_q$'' in (F1) by ``$\Sigma$ has a tripod containing $q'_p$'';
\item replace ``$\min\overline{|q\cdot|}\ge\pi/2$'' and ``$\min\overline{|\ ,\ |}\ge\pi/2$'' in (F2) by ``$\diam\Ant(q)=\pi$'' and  ``$\diam\Ant(\cdot)\equiv\pi$'', respectively.
\end{enumerate}
Then Proposition \ref{prop:extf} can be verified for strongly extremal subsets along almost the same line.
However, we will not use it.
\end{rem}

The proof of the ``only if'' part of Proposition \ref{prop:extf} actually shows that extremal subsets are stable under the Gromov-Hausdorff convergence of GCBA spaces (with a uniform lower bound on the radii of tiny balls).
Here for simplicity we state it only for the convergence of GCAT spaces.

\begin{prop}
Assume that a sequence $X_i$ of GCAT($\kappa$) spaces converges to a GCAT($\kappa$) space $X$ in the Gromov-Hausdorff sense.
Assume further that a sequence $E_i$ of extremal subsets of $X_i$ converges to a closed subset $E$ of $X$ under this convergence.
Then $E$ is an extremal subset of $X$. 
\end{prop}

\begin{proof}
Suppose $E$ is not extremal.
Then there are a tiny ball $U$ in $X$, $p\in U\cap E$, and $q\in U\setminus E$ such that $\dist_q|_E$ attains a local minimum at $p$ and $\dist_q$ is regular at $p$.
As in Claim \ref{clm:extf}, by replacing $q$ with a point sufficiently close to $p$ on the shortest path $pq$, we may assume that $\dist_q$ is regular at any (global) minimum point of $\dist_q|_E$.
Suppose $q_i\in X_i$ converges to $q$.
Since any minimum point of $\dist_{q_i}|_{E_i}$ converges to a minimum point of $\dist_q|_E$, we get a contradiction to the extremality of $E_i$.
\end{proof}

Proposition \ref{prop:extf} also implies

\begin{cor}\label{cor:cone}
Let $E$ be an extremal subset of a GCBA space and $p\in E$.
Then $T_pE$ is an extremal subset of $T_p$.
\end{cor}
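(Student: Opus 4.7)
The plan is to apply Proposition \ref{prop:extf} twice, in both directions of the biconditional, and to reduce the non-vertex case to a spherical suspension lemma. The ``only if'' direction of Proposition \ref{prop:extf} applied to $E\subset X$ at $p$ shows that $\Sigma_pE$ is extremal in $\Sigma_p$ in the sense of Definition \ref{dfn:extf}. Conversely, to establish that $T_pE$ is extremal in $T_p$ in the sense of Definition \ref{dfn:ext}, the ``if'' direction of Proposition \ref{prop:extf} applied to the GCBA space $T_p$ with its closed subset $T_pE$ reduces the task to verifying that $\Sigma_v(T_pE)$ is extremal in $\Sigma_v(T_p)$, in the sense of Definition \ref{dfn:extf}, for every $v\in T_pE$.

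I split into two cases. If $v=o_p$ is the cone vertex, then the canonical identifications $\Sigma_{o_p}(T_p)=\Sigma_p$ and $\Sigma_{o_p}(T_pE)=\Sigma_pE$ reduce the claim to what was already established. If $v=r\xi$ with $r>0$ and $\xi\in\Sigma_pE$, the Euclidean cone structure of $T_p$ splits off a radial line at $v$, yielding identifications $\Sigma_v(T_p)=\mathbb S^0*\Sigma_\xi\Sigma_p$ and $\Sigma_v(T_pE)=\mathbb S^0*\Sigma_\xi\Sigma_pE$ as spherical suspensions. Applying the ``only if'' direction of Proposition \ref{prop:extf} a second time, now to the GCBA space $\Sigma_p$ with the Definition \ref{dfn:ext}-extremal subset $\Sigma_pE$ (note that (F1) coincides with (E) on compact GCAT($1$) spaces), one obtains that $\Sigma_\xi\Sigma_pE$ is extremal in $\Sigma_\xi\Sigma_p$ in the sense of Definition \ref{dfn:extf}. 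It then suffices to prove a suspension lemma: spherical suspension with $\mathbb S^0$ preserves extremality in the sense of Definition \ref{dfn:extf}.

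The suspension lemma is a direct verification. The two poles lie at distance $\pi/2$ from every equator point, and a spherical Pythagorean computation translates antipodal distances in $\mathbb S^0*\Sigma$ to antipodal distances in $\Sigma$, so both (F1) and (F2) transfer from $F\subset\Sigma$ to $\mathbb S^0*F\subset\mathbb S^0*\Sigma$ via a short case analysis at the poles versus the equator. The main obstacle lies in this case analysis, particularly in preserving (F2): for a suspension point off the equator one must produce an antipodal witness by combining a witness in $\Sigma$ supplied by (F2) for $F$ with an appropriate pole, while for a pole itself one appeals to (F2) applied in $\Sigma$ together with the opposite pole. The vertex case and the two invocations of Proposition \ref{prop:extf} are essentially bookkeeping.
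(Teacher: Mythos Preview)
Your overall architecture matches the paper's exactly: apply Proposition~\ref{prop:extf} ``only if'' to get $\Sigma_pE$ extremal in $\Sigma_p$, then apply the ``if'' direction on $T_p$ to reduce to checking that $\Sigma_v(T_pE)$ is extremal (Definition~\ref{dfn:extf}) at every $v\in T_pE$; at the vertex this is immediate, and at $v=r\xi$ it is the spherical suspension $S(\Sigma_\xi\Sigma_pE)\subset S(\Sigma_\xi\Sigma_p)$, so everything hinges on a suspension lemma. The only real difference is how you propose to prove that lemma. The paper proves it by \emph{induction on dimension}, again via Proposition~\ref{prop:extf}: at a pole of $S(\Sigma)$ the space of directions is $(\Sigma,F)$ itself, while at a non-pole it is the lower-dimensional suspension $S(\Sigma_\xi F)\subset S(\Sigma_\xi\Sigma)$. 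You instead propose a direct verification of (F1) and (F2). That does work, but two points deserve correction.

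First, your identification of the ``main obstacle'' is inverted. Condition (F2) for $S(F)$ is \emph{vacuous}: the two poles belong to $S(F)$ and every point of $S(\Sigma)$ lies within $\pi/2$ of one of them, so $\max|S(F)\cdot|\le\pi/2$ and there is no $q$ with $|q\,S(F)|>\pi/2$. The paper notes this explicitly. Your discussion of ``producing an antipodal witness by combining a witness in $\Sigma$ with an appropriate pole'' is unnecessary. Second, the genuine content lies in (F1). At a non-pole $w=(t,\xi)$ the local-minimum condition forces $q'_w$ onto the equator of $\Sigma_wS(\Sigma)=S(\Sigma_\xi\Sigma)$, and your spherical computation correctly reduces criticality there to (F1) for $F$ at $\xi$. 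At a pole, however, the local-minimum condition only yields $|\zeta F|\ge\pi/2$ (where $q=(t',\zeta)$), and to conclude $\min\overline{|\zeta\cdot|}\ge\pi/2$ you must treat the borderline case $|\zeta F|=\pi/2$ by taking a nearest $\eta\in F$ and invoking Corollary~\ref{cor:ind} together with (F1) for $F$ at $\eta$ --- precisely the argument in the ``if'' half of Proposition~\ref{prop:extf}. So your ``direct verification'' is valid, but it is not purely a Pythagorean computation: it re-runs part of the proof of Proposition~\ref{prop:extf}. The paper's inductive formulation packages this more cleanly.
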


\begin{proof}
Let $\Sigma$ be a compact GCAT($1$) space and $F$ an extremal subset in the sense of Definition \ref{dfn:extf}.
Let $S(\Sigma)$ denote the spherical suspension over $\Sigma$ and $S(F)$ the subsuspension over $F$ in $S(\Sigma)$.
Note that the space of directions of $S(F)$ is again a subsuspension except at the poles.
Using this and Proposition \ref{prop:extf}, one can show by induction on dimension that $S(F)$ is an extremal subset of $S(\Sigma)$ in the sense of Definition \ref{dfn:extf} (note that the condition (F2) is trivial since $\max|S(F)\cdot|\le\pi/2$).
Since the space of directions of $T_pE$ is such a subsuspension except at the vertex, the claim follows from Proposition \ref{prop:extf}.
\end{proof}

\begin{ques}\label{ques:cone}
Is the blow-up limit of $E$ at $p$ unique and equal to $T_pE$?
This is true for CBB extremal subsets, but the proof is based on the existence of radial curves (see \cite[Proposition 3.3]{PP:ext} and \cite[p.\ 164]{Pet:semi}).
\end{ques}

Next we observe the behavior of extremal subsets under the set-theoretic operations.
The following is obvious from the definition.

\begin{prop}
The union of two extremal subsets is extremal.
\end{prop}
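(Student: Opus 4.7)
The plan is to unfold Definition \ref{dfn:ext} directly and exploit the fact that being a local minimum on a union is automatically a local minimum on whichever piece contains the point. Let $E_1, E_2$ be extremal subsets of a GCBA space $X$, and pick an arbitrary point $p \in E_1 \cup E_2$; without loss of generality $p \in E_1$. Since $E_1$ is extremal, there is a neighborhood $U_1$ of $p$ witnessing condition (E) for $E_1$. If $p \in E_2$ as well, there is an analogous neighborhood $U_2$ for $E_2$; if $p \notin E_2$, we may shrink to a neighborhood disjoint from $E_2$ and set $U_2$ to be this neighborhood. Let $U = U_1 \cap U_2$.

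Now suppose $p$ is a local minimum point of $\dist_q|_{E_1 \cup E_2}$ for some $q \in U \setminus (E_1 \cup E_2)$. Since $p \in E_1$ and $E_1 \cap U \subset (E_1 \cup E_2) \cap U$, the restriction $\dist_q|_{E_1}$ also attains a local minimum at $p$. Moreover, $q \notin E_1 \cup E_2$ forces $q \notin E_1$. The extremality of $E_1$, applied inside $U_1 \supset U$, therefore yields that $p$ is a critical point of $\dist_q$, i.e.\ $\min\overline{|q'_p\cdot|} \ge \pi/2$. This verifies condition (E) for $E_1 \cup E_2$ at $p$. Since $p$ was arbitrary and $E_1 \cup E_2$ is clearly closed, $E_1 \cup E_2$ is extremal.

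There is no genuine obstacle here: the argument uses only the monotonicity of the ``local minimum'' condition under restriction to subsets and the fact that $q \notin E_1 \cup E_2$ implies $q$ lies outside each $E_i$, so one can appeal directly to the extremality hypothesis of whichever piece contains $p$. The only minor point to record is the choice of neighborhood, which is handled by taking the intersection of the two neighborhoods provided by (E) for $E_1$ and $E_2$.
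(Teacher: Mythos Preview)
Your argument is correct and is exactly what the paper has in mind; the paper itself gives no proof, stating only that the proposition ``is obvious from the definition.'' One tiny remark on bookkeeping: strictly speaking, condition (E) in Definition~\ref{dfn:ext} asks the implication to hold for every $p'\in U\cap(E_1\cup E_2)$, not only for the center point you call $p$, but your choice of $U$ already covers this (if $p\notin E_2$ then $U$ is disjoint from $E_2$, so any such $p'$ lies in $E_1$; if $p\in E_1\cap E_2$ then $U\subset U_1\cap U_2$ and you appeal to whichever $E_i$ contains $p'$).
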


However, the intersection, and also the closure of the difference of two extremal subsets are not extremal in general.

\begin{ex}[cf.\ \cite{NSY}]\label{ex:nsy}
Let $D$ be the domain on the $xy$-plane bounded by curves $C_1$: $y=x^2$ ($x\ge0$) and $C_2$: $y=-x^2$ ($x\ge 0$) containing the positive $x$-axis.
Let $R_1$ and $R_2$ be two copies of $\mathbb R^2$.
We identify $C_i$ with the positive $x$-axis of $R_i$ ($i=1,2$) and also identify the negative $x$-axes of $R_1$ and $R_2$, which we denote by $C$.
The resulting space $X$ is a GCAT($0$) space.
The subsets $C\cup C_i$ ($i=1,2$) and $C\cup C_1\cup C_2$ are extremal.
However, the intersection of $C\cup C_1$ and $C\cup C_2$, namely $C$, is not extremal.
Similarly, the closure of the difference of $C\cup C_1\cup C_2$ and $C\cup C_i$ is not extremal.
Note that the spaces of directions of these three extremal subsets at the origin coincide, which does not happen for CBB extremal subsets (\cite[Lemma 3.4(2)]{PP:ext}).
See also the next remark.
\end{ex}

\begin{rem}
In the CBB case, the intersection and the closure of the difference of two extremal subsets are extremal (\cite[Proposition 3.5]{PP:ext}).
These facts lead to the stratification of CBB spaces by extremal subsets (\cite[3.8]{PP:ext}).
The proofs are based on the obtuse angle lemma \cite[Lemma 3.1]{PP:ext}, which does not hold for GCBA extremal subsets.
\end{rem}

Now we prove the extremal version of Proposition \ref{prop:dir} (cf.\ \cite[Proposition 1.5]{PP:ext}).
The reader is advised to read the original proof first.

\begin{prop}\label{prop:dirf}
Let $\Sigma$ be a compact GCAT(1) space and $F$ an extremal subset in the sense of Definition \ref{dfn:extf}.
Let $\{\xi_i\}_{i=1}^k$ be an $(\varepsilon,\delta)$-noncritical collection with a regular direction $\eta$.
Then
\begin{enumerate}
\item there exists $u\in F$ such that
\[|u\xi_i|>\pi/2+c(\varepsilon),\quad|u\eta|<\pi/2-c(\varepsilon)\]
for any $i$;
\item there exists $v\in F$ such that
\[|v\xi_1|<\pi/2-c(\varepsilon),\quad|v\xi_i|=\pi/2\,\quad|v\eta|>\pi/2+c(\varepsilon)\]
for any $i\ge 2$.
\end{enumerate}
\end{prop}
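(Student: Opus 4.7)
The plan is to follow the structure of the proof of Proposition \ref{prop:dir}, using at every step the two conditions of Definition \ref{dfn:extf} to keep the constructed points inside $F$: (F2) supplies a nearest point of $F$ to any direction whose antipodal structure is ``thin'', and (F1) forces that nearest point to be critical for the ambient distance. Corollary \ref{cor:ind} is the main technical tool, converting antipodal bounds in $\Sigma$ into antipodal bounds among derived directions at that nearest point.

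For (1), I would take $u\in F$ to be a minimizer of $\dist_\eta|_F$. The hypothesis $\overline{|\xi_1\eta|}<\pi/2-\varepsilon$ together with (F2) applied at $\eta$ yields $F\neq\emptyset$ and $|\eta F|\le\pi/2$, and (F1) at $u$ gives criticality $\min\overline{|\eta'_u\cdot|}\ge\pi/2$. Applying Corollary \ref{cor:ind} with $x=u$ and the two input directions being $\eta$ and $\xi_i$ rules out $|u\eta|$ lying in a $c(\varepsilon)$-neighborhood of $\pi/2$, giving $|u\eta|<\pi/2-c(\varepsilon)$; the symmetric application, with the roles of $\xi_i$ and $\eta$ exchanged, rules out $|u\xi_i|\in(\pi/2-c(\varepsilon),\pi/2+c(\varepsilon))$. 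The remaining small-distance branch $|u\xi_i|\le\pi/2-c(\varepsilon)$ should be excluded by refining the choice of $u$ (for instance, by replacing $\eta$ as the target by the closest-point projection from $F$ of a point slightly shifted along a geodesic from $\eta$ away from $\xi_i$, which exists by geodesic completeness), using that $|\xi_i\eta|>\pi/2+\varepsilon$ from Remark \ref{rem:ineq}.

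For (2), I would argue by induction on $k$. The base case $k=1$ is dual to (1): the nearest point $v\in F$ to $\xi_1$ satisfies $|v\xi_1|<\pi/2-c(\varepsilon)$ by (F2), (F1), and Corollary \ref{cor:ind}, and the bound $|v\eta|>\pi/2+c(\varepsilon)$ is extracted from the antipodal estimate $\overline{|\xi_1\eta|}<\pi/2-\varepsilon$ via a second application of the corollary combined with the already-established bound on $|v\xi_1|$. For the inductive step, I would pass to $\Sigma_w$ at an appropriately chosen base point $w\in F$ with $|w\xi_k|=\pi/2$, whose existence is supplied by Part (1) applied to a suitable sub-collection. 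Lemma \ref{lem:ind} then turns $\{(\xi_i)'_w\}_{i=1}^{k-1}$ into a noncritical collection with regular direction $\eta'_w$ in $\Sigma_w$, Proposition \ref{prop:extf} yields extremality of $\Sigma_wF\subset\Sigma_w$, and the inductive hypothesis in $\Sigma_w$ produces the required direction, which by geodesic completeness exponentiates at $w$ to the sought $v\in F$.

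The main obstacle I anticipate is the inductive step in (2): the equalities $|v\xi_i|=\pi/2$ for $i\ge 2$ are codimension-$(k-1)$ conditions that are destroyed by nearest-point projections, so the naive strategy of applying Proposition \ref{prop:dir} in $\Sigma$ and then projecting to $F$ cannot succeed. The induction must honestly produce a base point $w\in F$ on the $\pi/2$-sphere around $\xi_k$ via Part (1) for an appropriate sub-collection and then transfer every strict antipodal estimate through the exponentiation; closing the loop between Parts (1) and (2) without circularity, and verifying that Lemma \ref{lem:ind} applies in the regime required at $w$ (either all $\xi_i,\eta$ inside $B(w,\pi/2+\delta)$ or all outside $B(w,\pi/2-\delta)$), will be the most delicate technical points.
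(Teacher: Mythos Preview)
Your Part (1) is essentially right but needlessly complicated at the end. The ``small-distance branch'' $|u\xi_i|\le\pi/2-c(\varepsilon)$ is not a separate case requiring a refined choice of $u$: if $|u\xi_i|<\pi/2+\delta$ (which covers that branch) then both $\xi_i$ and $\eta$ lie in $B(u,\pi/2+\delta)$, so Lemma~\ref{lem:ind}(1) (not merely Corollary~\ref{cor:ind}) gives $\overline{|\xi_i'\eta'|}<\pi/2-c(\varepsilon)$ at $u$, directly contradicting (F1). Your proposed ``shift $\eta$ and re-project'' workaround is vague and unnecessary.

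Part (2) has a genuine gap, and it is exactly the one you flag but do not resolve. Your inductive step produces a \emph{direction} $\nu\in\Sigma_wF$ and then claims that ``by geodesic completeness'' it exponentiates to the sought $v\in F$. Geodesic completeness is a property of $\Sigma$, not of $F$; a geodesic from $w$ in direction $\nu$ need not lie in $F$ at all. If instead you approximate $\nu$ by directions $(v_n)'_w$ with $v_n\in F$, you only get $||v_n\xi_i|-\pi/2|=o(1)$, not the exact equalities the statement demands. You also never explain how Part~(1) yields $w\in F$ with $|w\xi_k|=\pi/2$ \emph{exactly}; Part~(1) delivers strict inequalities.

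The paper handles this differently. It inducts on $\dim\Sigma$, not on $k$. At a point $x\in F$ with $|x\xi_i|\ge\pi/2$ for $i\ge2$ and $|x\eta|\ge\pi/2+c(\varepsilon)$ (such $x$ exists by applying Part~(1) to a perturbation $\zeta$ of $\xi_1$ satisfying $\overline{|\zeta\xi_i|},\overline{|\zeta\eta|}<\pi/2-c(\varepsilon)$), Lemma~\ref{lem:ind}(2) makes $\{\xi_i'\}_{i\ge2}$ noncritical in $\Sigma_x$, and the inductive hypothesis in the \emph{lower-dimensional} $\Sigma_x$ (applied to the extremal subset $\Sigma_xF$) is used not to locate $v$ directly but to prove that the map $f=(|\xi_2\cdot|,\dots,|\xi_k\cdot|)$ restricted to $F$ is $c(\varepsilon)$-open near $x$. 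Openness then lets you \emph{move within $F$} to force $|x\xi_i|=\pi/2$ for all $i\ge2$ simultaneously, after which minimizing $|\cdot\,\xi_1|$ over that level set in $F$ gives $v$. The point is that the exact equalities are achieved by an open-mapping argument inside $F$, not by exponentiating a single infinitesimal direction.
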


\begin{proof}
(1)
Let $u\in F$ be a closest point to $\eta$.
By the condition (F2) of Definition \ref{dfn:extf}, we have $|u\eta|\le\pi/2$ (note also that $F$ is nonempty).
If $|u\xi_i|<\pi/2+\delta$ for some $i$, then by Lemma \ref{lem:ind}(1), we have $\overline{|\eta'\xi_i'|}<\pi/2-c(\varepsilon)$ at $u$, which contradicts the condition (F1) for $\Sigma_uF$.
Since $\delta$ depends only on dimension and $\varepsilon$, we obtain $|u\xi_i|>\pi/2+c(\varepsilon)$ for any $i$.
Similarly if $|u\eta|>\pi/2-\delta$, we get a contradiction by Lemma \ref{lem:ind}(2).
Therefore $|u\eta|<\pi/2-c(\varepsilon)$.

(2)
The proof is almost the same as that of Proposition \ref{prop:dir}(2).
Since the case $k=1$ is included in (1), we assume $k\ge 2$ and hence $\dim\Sigma\ge1$ by Proposition \ref{prop:dir}(1).
We prove the claim by induction on $\dim\Sigma$.

First we set
\[Y:=\left\{x\in F\mid|x\xi_i|\ge\pi/2\ (i\ge 2), \ |x\eta|\ge\pi/2+c(\varepsilon)\right\}\]
and show that $Y$ is nonempty.
We prove that there exists $\zeta\in\Sigma$ such that $\overline{|\zeta\xi_i|}<\pi/2-c(\varepsilon)$ for any $i\ge2$ and $\overline{|\zeta\eta|}<\pi/2-c(\varepsilon)$.
Then a closest point to $\zeta$ in $F$ is contained in $Y$, as shown in (1).
The desired point $\zeta$ is obtained by moving $\xi_1$ toward $\eta$ slightly.
Let $\bar \xi_i$ be an arbitrary antipode of $\xi_i$, where $i\ge 2$.
Then $|\bar\xi_i\eta|<\pi/2-\varepsilon$ and $|\bar\xi_i\xi_1|<\pi/2+\delta$.
Since $|\xi_1\eta|>\pi/2+\varepsilon$ by Remark \ref{rem:ineq}, triangle comparison shows $\angle\bar\xi_i\xi_1\eta<\pi/2-c(\varepsilon)$.
This means that moving $\xi_1$ toward $\eta$ decreases the distance to $\bar\xi_i$ with velocity at least $c(\varepsilon)$.
Hence one can find the desired $\zeta$ in a $c(\varepsilon)$-neighborhood of $\xi_1$.

Next we set
\[Z:=\left\{x\in F\mid |x\xi|=\pi/2\ (i\ge 2),\ |x\eta|\ge\pi/2+c(\varepsilon)\right\}\]
and show that $Z$ is nonempty.
Suppose $x\in Y$.
Then Lemma \ref{lem:ind}(2) implies that $\{\xi_i'\}_{i=2}^k$ is a $(c(\varepsilon),\varkappa(\delta))$-noncritical collection at $x$ with a regular direction $\eta'$.
By the inductive assumption and (1),  we find that the map $f=(|\xi_2\cdot|,\dots,|\xi_k\cdot|)$ restricted to $F$ is $c(\varepsilon)$-open near $x$ (see \cite[2.1.1]{Per:alex} or \cite[Lemma 3.1]{F:nc}).
Furthermore, for any $i\ge 2$ there exists $v_i\in\Sigma_x F$ such that
\[|v_i\xi_i'|<\pi/2-c(\varepsilon),\quad|v_i\xi_j'|=\pi/2\,\quad|v_i\eta'|>\pi/2+c(\varepsilon)\]
for any $j\neq i$ ($\ge2$).
By using the $c(\varepsilon)$-openness of $f|_F$, one can find $y\in F$ near $x$ such that
\[|y\xi_i|<|x\xi_i|,\quad|y\xi_j|=|y\xi_j|,\quad|y\eta|>|y\eta|\]
(see \cite[2.1.3]{Per:alex} or \cite[Lemma 3.2]{F:nc}).
Hence the claim is proved by contradiction.

Let $v\in Z$ be a closest point to $\xi_1$.
Suppose $|v\xi_1|>\pi/2-\delta$.
Then $\{\xi_i'\}_{i=1}^k$ is a $(c(\varepsilon),\varkappa(\delta))$-noncritical collection at $v$.
The same argument as in the previous paragraph shows that there exists a point of $Z$ closer to $\xi_1$ than $v$, which is a contradiction.
\end{proof}

As already used in the proof of (2), the above proposition implies the openness of a noncritical map restricted to an extremal subset.

\begin{prop}\label{prop:open}
Let $X$ be a GCBA space, $E$ an extremal subset, and $p\in E$. 
Suppose $f:X\to\mathbb R^k$ is $(\varepsilon,\delta)$-noncritical at $p$.
Then $f|_E$ is $c(\varepsilon)$-open near $p$.
\end{prop}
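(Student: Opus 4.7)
The plan is to reduce Proposition \ref{prop:open} to the infinitesimal statement Proposition \ref{prop:dirf}(2) and then apply the standard iterative openness argument of Perelman, essentially as in \cite[2.1.3]{Per:alex} and \cite[Lemma 3.2]{F:nc}. The key new input is that Proposition \ref{prop:extf} identifies $\Sigma_xE$ as an extremal subset of $\Sigma_x$ (in the sense of Definition \ref{dfn:extf}) at every point $x\in E$, so the infinitesimal data needed for the openness argument is available along $E$, not merely in the ambient space.

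First I would verify the infinitesimal step: by continuity of the antipodal distance (Definition \ref{dfn:nc}), the map $f$ remains $(c(\varepsilon),\varkappa(\delta))$-noncritical at every $x\in E$ in a sufficiently small neighborhood of $p$. Applying Proposition \ref{prop:dirf}(2) inside $\Sigma_xE$, for each $i\in\{1,\dots,k\}$ there exists $v_i^x\in\Sigma_xE$ with
\[|v_i^x\,(a_i)'_x|<\pi/2-c(\varepsilon),\quad |v_i^x\,(a_j)'_x|=\pi/2\text{ for }j\neq i.\]
By the definition of $\Sigma_xE$ together with geodesic completeness, we can find $y\in E$ arbitrarily close to $x$ whose direction $y'_x$ is arbitrarily close to $v_i^x$; CAT($\kappa$) comparison then yields $|a_iy|\le|a_ix|-c(\varepsilon)|xy|$ while $|a_jy|=|a_jx|+o(|xy|)$ for $j\neq i$.

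The second step is the iteration. Given $x\in E$ near $p$ and a target $w\in\mathbb R^k$ with $|w-f(x)|<c'(\varepsilon)r$, I would construct a sequence $x=x_0,x_1,\dots\in E$ by, at each stage, selecting the index $i$ that maximizes $|(f(x_n))_i-w_i|$, moving in the direction $\pm v_i^{x_n}$ for a step size comparable to $|f(x_n)-w|$, and exploiting the estimates above to obtain $|f(x_{n+1})-w|\le(1-c''(\varepsilon))|f(x_n)-w|$. The step sizes then form a geometric series with sum bounded by a multiple of $|w-f(x)|$, so the $x_n$ converge to some $y$ with $f(y)=w$ and $|xy|\le r$; the limit lies in $E$ because $E$ is closed. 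The main obstacle is the bookkeeping for the accumulated $o(|x_nx_{n+1}|)$ second-order errors in the coordinates $j\neq i$, but this is exactly the scheme handled in \cite[Lemma 3.2]{F:nc}, which carries over once the infinitesimal data from Proposition \ref{prop:dirf}(2) is supplied along $E$.
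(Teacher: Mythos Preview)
Your proposal is correct and follows essentially the same approach as the paper: the paper states Proposition~\ref{prop:open} immediately after Proposition~\ref{prop:dirf} with only the remark ``As already used in the proof of (2), the above proposition implies the openness of a noncritical map restricted to an extremal subset,'' pointing to the very iteration scheme (\cite[2.1.1--2.1.3]{Per:alex}, \cite[Lemmas 3.1, 3.2]{F:nc}) that you spell out. Your explicit use of Proposition~\ref{prop:extf} to justify that $\Sigma_xE$ is extremal in the sense of Definition~\ref{dfn:extf}, so that Proposition~\ref{prop:dirf} applies, is exactly the missing link the paper leaves implicit.
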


For a subset $A\subset X$, we define its \textit{noncritical number} as the maximal integer $k$ such that there exists a noncritical map to $\mathbb R^k$ at some $p\in A$ (as noted in Definition \ref{dfn:nc}, we sometimes omit ``$(\varepsilon,\delta)$'').
We also define its \textit{splitting number} as the maximal integer $k$ such that there exists $p\in A$ for which $T_p$ splits off an $\mathbb R^k$-factor.
It is easy to see that the noncritical number is not less than the splitting number.
The following corollary is an immediate consequence of the above proposition and the result of Lytchak-Nagano \cite[Theorem 1.6]{LN:geod}.

\begin{cor}\label{cor:dim}
The Hausdorff dimension of any open subset $U$ of $E$ is equal to its noncritical number and splitting number.
In particular, it is an integer or infinity.
\end{cor}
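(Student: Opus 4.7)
The plan is to establish a cycle of inequalities $S(U) \leq N(U) \leq \dim_H U \leq S(U)$, where $N(U)$ and $S(U)$ denote the noncritical and splitting numbers of $U$ respectively, thereby giving the equality (and the resulting integrality or infinity of the common value). Two of these are immediate. The inequality $S(U) \leq N(U)$ follows since a splitting $T_p \cong \mathbb{R}^k \times Y$ at some $p \in U$ yields orthogonal unit directions $\xi_1,\dots,\xi_k \in \Sigma_p$ together with the regular direction $\eta := -(\xi_1+\cdots+\xi_k)/\sqrt{k}$, which together form a (strict) noncritical collection at $p$. The inequality $N(U) \leq \dim_H U$ is the content of Proposition \ref{prop:open}: at $p \in U$ where $f : X \to \mathbb{R}^k$ is $(\varepsilon,\delta)$-noncritical, the restriction $f|_E$ is $c(\varepsilon)$-open near $p$, so $f(B(p,r) \cap E)$ contains a Euclidean $k$-ball of radius $c(\varepsilon)\,r$ for all small $r$, forcing $\dim_H(B(p,r) \cap E) \geq k$ and hence $\dim_H U \geq k$.

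For the remaining inequality $\dim_H U \leq S(U)$, the plan is to adapt Lytchak--Nagano's proof of \cite[Theorem 1.6]{LN:geod} to the extremal setting, using the extremal version Proposition \ref{prop:dirf} in place of the ambient Proposition \ref{prop:dir}. Fix a point $p \in U$ realizing the local dimension $d := \dim_H U$, and assume inductively that the statement holds in ambient spaces of lower local dimension. One inductively produces noncritical collections of rank $d$ at points of $E$ by invoking Proposition \ref{prop:dirf}(2) to successively refine a given collection: each step supplies a ``strainer-like'' direction $v \in \Sigma_p E$ extending the collection, while Proposition \ref{prop:open} applied to the already-constructed noncritical map guarantees the openness needed to pass to a nearby point of $E$. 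A compactness argument and a Gromov--Hausdorff blow-up then produce a point $p' \in E$ near $p$ at which $T_{p'}$ splits off an $\mathbb{R}^d$-factor, giving $S(U) \geq d$.

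The main obstacle is this last step. As noted in Remark \ref{rem:dim}, the infinitesimal dimension $\dim T_p E$ can drop strictly below $\dim_p E$, so one cannot simply argue by induction on the dimension of the tangent cone $T_p E$ of $E$. Instead, the induction must be carried out in the ambient tangent cone $T_p$, where Lytchak--Nagano's theorem applies in full strength, using the extremality of $T_p E$ in $T_p$ (Corollary \ref{cor:cone}) together with Proposition \ref{prop:dirf} as the transfer mechanism that realizes the ambient splitting at a point of $E$.
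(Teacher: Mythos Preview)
Your cycle of inequalities $S(U)\le N(U)\le\dim_H U\le S(U)$ is exactly the paper's strategy, and your arguments for $S(U)\le N(U)$ and $N(U)\le\dim_H U$ match the paper's (the latter via Proposition~\ref{prop:open}).

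The issue is in the third inequality. You propose to \emph{adapt} the Lytchak--Nagano argument to the extremal setting via Proposition~\ref{prop:dirf}, but this is unnecessary: the splitting number $S(U)$ is defined through the \emph{ambient} tangent cone $T_p$ of $X$ (not $T_pE$), so \cite[Theorem~1.6]{LN:geod} already gives $\dim_H U\le S(U)$ as a black box for any subset $U\subset X$, with no extremality needed at all. The paper simply cites this result; extremality enters only through Proposition~\ref{prop:open} in the inequality $N(U)\le\dim_H U$.

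Accordingly, the ``obstacle'' you raise in your third paragraph---that $\dim T_pE$ may drop below $\dim_pE$---is a red herring here: nothing about $T_pE$ is involved in bounding $\dim_H U$ from above, only the ambient $T_p$. Your sketched inductive construction (producing rank-$d$ noncritical collections inside $E$ via Proposition~\ref{prop:dirf}(2), then passing to a limit to obtain a splitting) would, if completed, essentially reprove a special case of \cite[Theorem~1.6]{LN:geod}, and as written it is vague at the crucial step (what exactly is the inductive hypothesis, and how does one ``fix $p\in U$ realizing $d=\dim_H U$'' when $d$ may be infinite or not attained at any single point?). You should simply invoke \cite[Theorem~1.6]{LN:geod} directly.
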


\begin{proof}[Proof of Corollary \ref{cor:dim}]
It follows from Proposition \ref{prop:open} that the Hausdorff dimension is not less than the noncritical number.
On the other hand, the result of Lytchak-Nagano \cite[Theorem 1.6]{LN:geod} says that the Hausdorff dimension is not greater than the splitting number.
\end{proof}

From now on, by \textit{dimension} we mean the Hausdorff dimension.
Let us define the \textit{local dimension} $\dim_pE$ by the formula \eqref{eq:dim} and the \textit{$k$-dimensional part} $E^k$ as in the introduction (as noted in Remark \ref{rem:prf}, we will prove Theorem \ref{thm:dim} later).

\begin{cor}\label{cor:dim2}
For any $p\in E$, we have
\[\dim_pE\ge\dim T_pE=\dim\Sigma_pE+1.\]
In particular, if $p\in E^k$, then $\dim T_pE\le k$.
\end{cor}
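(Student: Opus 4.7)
The plan is as follows. First, the equality $\dim T_pE = \dim \Sigma_pE + 1$ is the standard Hausdorff-dimension cone formula for Euclidean cones over a non-empty compact metric space (the degenerate case $\Sigma_pE = \emptyset$ means $p$ is an isolated point of $E$, so $\dim_pE = \dim T_pE = 0$ and the assertion is trivial). The nontrivial content is the inequality $\dim_pE \ge \dim T_pE$, which I plan to establish by producing genuine noncritical maps on $X$ at points of $E$ arbitrarily close to $p$.

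Set $k := \dim T_pE$. Corollary \ref{cor:cone} tells us that $T_pE$ is an extremal subset of the GCAT($0$), hence GCBA, space $T_p$. Applying Corollary \ref{cor:dim} to $T_pE$ as an open subset of itself yields a point $v \in T_pE$ together with reference points $a_1, \ldots, a_k \in T_p$ such that the map $\tilde f := (|a_1 \cdot|, \ldots, |a_k \cdot|) \colon T_p \to \mathbb{R}^k$ is $(\varepsilon, \delta)$-noncritical at $v$ for some $\varepsilon > 0$ and $\delta \ll \varepsilon$; by rescaling within the cone we may further assume $v$ and the $a_i$ lie in a small ball around $o_p$. To lift this to $X$, I would consider a pointed Gromov--Hausdorff convergence $(\lambda_n X, p) \to (T_p, o_p)$ with $\lambda_n \to \infty$. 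Since $T_pE$ is recovered from limiting directions of sequences in $E$, one can choose $v^n \in E$ and $a_i^n \in X$ approximating $v$ and the $a_i$ respectively in $\lambda_n X$. Because the antipodal distances appearing in Definition \ref{dfn:nc} behave upper semicontinuously under such convergence and the noncriticality conditions are open in their parameters, for large $n$ the map $f^n := (|a_1^n \cdot|, \ldots, |a_k^n \cdot|) \colon X \to \mathbb{R}^k$ is noncritical at $v^n \in E$ with only slightly relaxed constants, both in the rescaled and (by scale invariance of the antipodal distances on spaces of directions) in the original metric of $X$.

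Proposition \ref{prop:open} then makes $f^n|_E$ a $c(\varepsilon)$-open map near $v^n$, whence $\dim(B(v^n, \rho) \cap E) \ge k$ for some fixed small $\rho > 0$. Since $v^n \to p$ in $X$, for every $r > 0$ one has $B(v^n, \rho) \subset B(p, r)$ for large $n$, which forces $\dim(B(p, r) \cap E) \ge k$ and therefore $\dim_pE \ge k$. The ``in particular'' statement follows at once from the definition of $E^k$. The main obstacle will be the rigorous justification of the lifting step, i.e., checking that $(\varepsilon, \delta)$-noncriticality at $v \in T_p$, which is a condition on the antipodal geometry of $\Sigma_vT_p$, transfers to noncriticality at the nearby point $v^n \in X$ (a condition on $\Sigma_{v^n}X$) under Gromov--Hausdorff convergence with only controlled degradation of $\varepsilon$ and $\delta$; this should reduce to a standard upper semicontinuity argument for antipodal distances along converging sequences of spaces of directions.
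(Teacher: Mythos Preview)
Your proposal is correct. For the inequality $\dim_pE \ge \dim T_pE$ you and the paper pursue the same strategy: produce a noncritical map of rank $k=\dim T_pE$ at some point of $T_pE$ (via Corollary~\ref{cor:dim} applied to the extremal subset $T_pE\subset T_p$, using Corollary~\ref{cor:cone}) and lift it through the blow-up convergence to a noncritical map on $E$ near $p$; the paper compresses this to the single clause ``one can lift it to a noncritical map on $E$'', so your more detailed sketch of the lift is an elaboration of what the paper leaves implicit. For the equality $\dim T_pE=\dim\Sigma_pE+1$, however, you take a genuinely different route. You invoke the general Hausdorff-dimension cone formula, using that the metric $T_p$ induces on the subcone $T_pE$ is exactly the Euclidean cone metric over $\Sigma_pE$. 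The paper instead argues entirely through the noncritical/splitting numbers of Corollary~\ref{cor:dim}: the direction $\ge$ is that the splitting number at $\xi\in\Sigma_pE$ inside $\Sigma_p$ increases by one when $\xi$ is viewed inside $T_p$; for the direction $\le$, after treating the non-vertex case directly, the paper handles the vertex case by using Proposition~\ref{prop:dirf}(2) to locate, from a noncritical $k$-collection $\{\xi_i\}$ in $\Sigma_p$, a point $\zeta\in\Sigma_pE$ at which the $(k-1)$-subcollection $\{\xi_i'\}_{i\ge2}$ remains noncritical by Lemma~\ref{lem:ind}(2). Your argument for the equality is shorter and bypasses Proposition~\ref{prop:dirf} entirely, at the cost of importing an external measure-theoretic fact; the paper's argument stays within its noncritical-map machinery and actually proves the stronger statement that the noncritical and splitting numbers themselves (not merely the Hausdorff dimensions) satisfy the cone relation.
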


\begin{proof}
If there exists a noncritical map on $T_pE$, one can lift  it to a noncritical map on $E$.
Thus we have $\dim _pE\ge\dim T_pE$.

If $k$ is the splitting number of $\xi\in\Sigma_pE$, then the splitting number of $\xi$ in $T_p$ is $k+1$.
Thus we have $\dim T_pE\ge\dim\Sigma_pE+1$.

To prove the opposite inequality, suppose $k$ is the splitting number of $v\in T_pE$, where $k\ge 2$.
If $v$ is not the vertex, we may assume $v\in\Sigma_pE$.
It is easy to see that the splitting number of $v$ in $\Sigma_p$ is not less than $k-1$.
If $v$ is the vertex, then there exists a noncritical collection $\{\xi_i\}_{i=1}^k$ of $\Sigma_p$ with a regular direction $\eta$.
By Proposition \ref{prop:dirf}(2), there exists $\zeta\in\Sigma_pE$ such that $|\zeta\xi_i|=\pi/2$ for any $i\ge 2$ and $|\zeta\eta|\ge\pi/2$.
By Lemma \ref{lem:ind}(2) the noncritical number of $\zeta$ is at least $k-1$.
This completes the proof.
\end{proof}

\begin{rem}
The last argument in the above proof shows that $k\le\dim F+1$ in Proposition \ref{prop:dirf}.
\end{rem}

\begin{rem}\label{rem:dim2}
The equality of Corollary \ref{cor:dim2} does not hold in general.
For example, let $X$, $C$, and $D$ be as in Example \ref{ex:nsy}.
Then $C\cup D$ is an extremal subset of $X$ with local dimension $2$ at the origin, but its tangent cone at the origin is $1$-dimensional, i.e., $\mathbb R$.
\end{rem}

Next we prove Theorem \ref{thm:reg}.
Recall that $p\in E$ is a \textit{$k$-regular point} of $E$ if $T_pE$ is isometric to $\mathbb R^k$.

\begin{lem}\label{lem:reg}
Let $X$ be a GCBA space, $E$ an extremal subset, and $p\in E$ with $\dim T_pE\le k$.
If $T_p=\mathbb R^k\times K$, where $K$ is an Euclidean cone with vertex $o$, then $T_pE=\mathbb R^k\times \{o\}$.
In particular, $p$ is a $k$-regular point of $E$.
\end{lem}

\begin{proof}
The splitting number of any point of $\mathbb R^k\times K\setminus\{o\}$ is clearly greater than $k$. 
Since $\dim T_pE\le k$, we have $T_pE\subset \mathbb R^k\times\{o\}$.
If the equality does not hold, then we get a contradiction to the extremality of $T_pE$.
\end{proof}

\begin{proof}[Proof of Theorem \ref{thm:reg}]
By Corollaries \ref{cor:dim}, \ref{cor:dim2}, and Lemma \ref{lem:reg}, we see that the set of $k$-regular points is dense in $E^k$.
Similarly, we find that the splitting number of any non-$k$-regular point in $E^k$ is less than $k$.
Hence by \cite[Theorem 1.6]{LN:geod}, the set of non-$k$-regular points in $E^k$ has Hausdorff dimension at most $k-1$.
The local finiteness of the $k$-dimensional Hausdorff measure on $E^k$ also follows from \cite[Proposition 10.6]{LN:geod}.
The local positiveness follows from the denseness of $k$-regular points and Proposition \ref{prop:open}.
\end{proof}

Finally we prove Theorems \ref{thm:mfd} and \ref{thm:dim}.

\begin{lem}\label{lem:mfd}
Let $X$ be a GCBA space, $E$ an extremal subset, and $p\in E$ with $\dim T_pE\le k$.
Suppose $f:X\to\mathbb R^k$ is $(\varepsilon,\delta)$-noncritical at $p$.
Then
\[\liminf_{E\ni q\to p}\frac{|f(p)f(q)|}{|pq|}\ge c(\varepsilon).\]
\end{lem}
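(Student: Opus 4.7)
The plan is to use the first-variation formula to reformulate the lemma as an infinitesimal statement at $p$ and then derive a contradiction with the hypothesis $\dim T_pE\le k$ by a dimension count in the iterated direction space. From $f_j(q)-f_j(p)=-|pq|\cos|(a_j)'_p(q)'_p|+o(|pq|)$ one obtains
\[
\liminf_{E\ni q\to p}\frac{|f(p)f(q)|}{|pq|}=\inf_{\eta\in\Sigma_pE}\sqrt{\sum_{j=1}^{k}\cos^{2}|(a_j)'_p\eta|},
\]
so it suffices to show that for every $\eta\in\Sigma_pE$ some $|(a_j)'_p\eta|$ is bounded away from $\pi/2$ by a constant $c(\varepsilon)>0$ (the case $\Sigma_pE=\emptyset$ being trivial).

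I would argue by contradiction: fix $\delta'\ll\varepsilon$ and suppose $\eta\in\Sigma_pE$ satisfies $||(a_j)'_p\eta|-\pi/2|<\delta'$ for every $j$. At $x=\eta$ one of the two cases of Lemma~\ref{lem:ind} applies according to whether $|b'_p\eta|<\pi/2+\delta'$ or $|b'_p\eta|\ge\pi/2-\delta'$ (note $|b'_p\eta|<\pi-\varepsilon$ by the definition of antipodal distance), and Corollary~\ref{cor:ind} supplies the regular direction. Hence $\{(a_j)'_{p,\eta}\}_{j=1}^{k}$ is a $(c(\varepsilon),\varkappa(\delta'))$-noncritical collection of $\Sigma_\eta\Sigma_p$ with regular direction $b'_{p,\eta}$. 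Applying Proposition~\ref{prop:extf} twice (first to $X$, then to the GCBA space $\Sigma_p$), $\Sigma_\eta(\Sigma_pE)$ becomes an extremal subset of $\Sigma_\eta\Sigma_p$ in the sense of Definition~\ref{dfn:extf}.

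Proposition~\ref{prop:dirf}(2) then produces $v\in\Sigma_\eta(\Sigma_pE)$ with $|v(a_1)'_{p,\eta}|<\pi/2-c(\varepsilon)$, $|v(a_i)'_{p,\eta}|=\pi/2$ for every $i\ge 2$, and $|vb'_{p,\eta}|>\pi/2+c(\varepsilon)$. At this $v$ I would apply Lemma~\ref{lem:ind}(2) to the sub-collection $\{(a_j)'_{p,\eta}\}_{j\ge 2}$ with regular direction $b'_{p,\eta}$ (all distances from $v$ exceed $\pi/2-\varkappa(\delta')$), yielding a noncritical collection $\{(a_j)'_{p,\eta,v}\}_{j\ge 2}$ of $\Sigma_v\Sigma_\eta\Sigma_p$ of size $k-1$; this exhibits the noncritical number of $\Sigma_\eta(\Sigma_pE)$ at $v$ as at least $k-1$ (when $k=1$ this step is vacuous, but the existence of $v$ already gives $\Sigma_\eta(\Sigma_pE)\ne\emptyset$, i.e.\ dimension $\ge 0$). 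Corollary~\ref{cor:dim} then gives $\dim\Sigma_\eta(\Sigma_pE)\ge k-1$, and Corollary~\ref{cor:dim2} applied to $\Sigma_pE$ at $\eta$ forces
\[
\dim T_pE=\dim\Sigma_pE+1\ge\dim_\eta\Sigma_pE+1\ge\dim T_\eta\Sigma_pE+1\ge k+1,
\]
contradicting the hypothesis.

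The main obstacle is the quantitative tracking of the constants: one must choose $\delta'=\delta'(\varepsilon)$ so that the $\varkappa$-terms coming successively from Lemma~\ref{lem:ind}, Corollary~\ref{cor:ind}, and Proposition~\ref{prop:dirf}(2) remain negligible compared with the $c(\varepsilon)$-terms against which they are compared, so that both the contradiction above and the resulting uniform lower bound depend only on $\varepsilon$ and the local dimension at $p$.
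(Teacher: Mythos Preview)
Your argument is correct and follows the same strategy as the paper: reduce to a direction $\eta\in\Sigma_pE$ with $||(a_j)'_p\eta|-\pi/2|$ small, apply Lemma~\ref{lem:ind} to obtain a noncritical collection of size $k$ at $\eta$, and contradict $\dim T_pE\le k$. The paper's proof is shorter because it stops here: since $\Sigma_pE$ is extremal in $\Sigma_p$ (Proposition~\ref{prop:extf}), Corollary~\ref{cor:dim} applied directly to $\Sigma_pE\subset\Sigma_p$ already gives $\dim\Sigma_pE\ge k$, hence $\dim T_pE\ge k+1$ by Corollary~\ref{cor:dim2}. Your descent via Proposition~\ref{prop:dirf}(2) to find $v\in\Sigma_\eta(\Sigma_pE)$ and then a $(k-1)$-collection at $v$ is an unnecessary extra step---it effectively unrolls one layer of the induction already packaged inside Corollary~\ref{cor:dim}/Proposition~\ref{prop:open}---but it does no harm.
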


\begin{proof}
Let $f=(|a_1\cdot|,\dots,|a_k\cdot|)$, $a_i\in X$, and $b\in X$ be as in Definition \ref{dfn:nc}.
Suppose $\liminf_{E\ni q\to p}|f(p)f(q)|/|pq|<\delta$.
Fix a sequence $E\ni q_i\to p$ such that $|f(p)f(q_i)/|pq_i|$ converges to the limit inferior.
Passing to a subsequence, we may assume that $(q_i)'_p$ converges to $\xi\in\Sigma_pE$.
The assumption implies $||a_i'\xi|-\pi/2|<\varkappa(\delta)$ (see also \cite[Lemma 5.5]{LN:geod}).
By Lemma \ref{lem:ind}, $\{a_i''\}_{i=1}^k$ is a noncritical collection at $\xi$ with a regular direction $b''$.
Hence we have $\dim\Sigma_pE\ge k$, in contradiction to $\dim T_pE\le k$.
\end{proof}

\begin{proof}[Proof of Theorem \ref{thm:mfd}]
The first statement immediately follows from Corollary \ref{cor:dim} and \cite[Theorem 1.6]{LN:geod}.
Indeed, since the splitting number of $E^k$ is not greater than $k+1$, $E^k$ is $k$-rectifiable by \cite[Theorem 1.6]{LN:geod}.
See also the next remark.

We show the second statement.
Let $p\in E^k$.
By Corollary \ref{cor:dim}, there exist $q\in E$ arbitrarily close to $p$ and an $(\varepsilon,\delta)$-noncritical map $f:U\to\mathbb R^k$, where $U$ is an open neighborhood of $q$ in $E$.
We may assume $U$ is contained in $E^k$.
Then a result of Lytchak \cite[Proposition 1.1]{L}, together with Lemma \ref{lem:mfd}, implies that $U$ contains an open dense subset on which  $f$ is a locally bi-Lipschitz embedding.
This completes the proof.
For the convenience of the reader, we include Lytchak's argument below (from \cite[Lemma 3.1]{L}).

For any positive integer $m$, we define a closed subset $U_m$ in $U$ by
\[U_m:=\left\{x\in U\mid|xy|<1/m,\ y\in U\Rightarrow|f(x)f(y)|\ge c(\varepsilon)|xy|\right\}.\]
Lemma \ref{lem:mfd} implies $U_m$ cover $U$. 
Note that $f$ is locally bi-Lipschitz on each $U_m$.
We also define a locally closed subset $V_m:=U_m\setminus U_{m-1}$.
Then the Baire category theorem implies that the union of the interiors of $V_m$ is dense in $U$ (see \cite[Lemma 2.1]{L}).
This proves the claim.
\end{proof}

\begin{rem}
The first statement also follows from the proof of the second one (cf.\ \cite[Corollary 3.2]{L}).
Indeed, by Theorem \ref{thm:reg}, the set of $k$-regular points in $E^k$ has full measure, and such a regular point has a neighborhood with a noncritical map to $\mathbb R^k$.
By the above argument, the neighborhood is covered by countably many subsets on which the noncritical map is locally bi-Lipschitz.
Therefore $E^k$ is $k$-rectifiable.
\end{rem}

For a subset $A\subset X$, we define its \textit{bi-Lipschitz number} as the maximal integer $k$ such that there exists an open subset of $A$ that is bi-Lipschitz homeomorphic to an Euclidean ball of dimension $k$.

\begin{proof}[Proof of Theorem \ref{thm:dim}]
The Hausdorff dimension is not less than the topological dimension in general.
The topological dimension is not less than the bi-Lipschitz number.
Furthermore, Theorems \ref{thm:reg} and \ref{thm:mfd} imply that the bi-Lipschitz number of any open subset of an extremal subset is not less than the Hausdorff dimension.
This completes the proof.
\end{proof}

\begin{rem}
For the proofs of our results, one can also use the strainer maps of Lytchak-Nagano \cite{LN:geod} instead of noncritical maps.
\end{rem}

\begin{rem}\label{rem:homeo}
In general, a noncritical map (even a strainer map) restricted to an extremal subset is not a homeomorphism even if it has maximal rank.
For example, let $X$, $C_1$, $C_2$, $C$, and $D$ be as in Example \ref{ex:nsy}.
Consider the distance function from a point of $C$ but the origin, which is a strainer map at the origin.
However, its restriction to extremal subsets $C\cup C_1\cup C_2$ and $C\cup D$ are never injective near the origin.
Compare with the CBB case (\cite[Theorem 3.12]{F:reg}).
\end{rem}

\end{document}